\documentclass[11pt,twoside]{preprint}
\usepackage[a4paper,innermargin=1in,outermargin=1in,
bottom=1.5in,marginparwidth=1in,marginparsep
=3mm]{geometry}
\usepackage{amsmath,amsthm,amssymb,enumerate}
\usepackage{hyperref}
\usepackage[nameinlink,capitalize]{cleveref}
\usepackage{mathrsfs}
\usepackage{breakurl}
\usepackage[osf]{newtxtext}
\usepackage[varqu,varl]{inconsolata}
\usepackage[cal=boondoxo]{mathalfa}
\usepackage[bigdelims,vvarbb,cmintegrals]{newtxmath}

\usepackage[dvipsnames]{xcolor}
\usepackage{mhequ}
\usepackage{comment}
\usepackage{microtype}
\usepackage{pifont}

\colorlet{darkblue}{blue!90!black}
\colorlet{darkred}{red!90!black}
\colorlet{darkgreen}{green!60!black}

\usepackage{accents}

\newtheorem{theorem}{Theorem}[section]
\newtheorem{lemma}[theorem]{Lemma}

\newtheorem{corollary}[theorem]{Corollary}

\theoremstyle{definition}
\newtheorem{assumption}[theorem]{Assumption}
\newtheorem{definition}[theorem]{Definition}

\theoremstyle{remark}
\newtheorem{remark}[theorem]{Remark}

\AddToHook{env/lemma/begin}{\crefalias{theorem}{lemma}}
\AddToHook{env/proposition/begin}{\crefalias{theorem}{proposition}}
\AddToHook{env/corollary/begin}{\crefalias{theorem}{corollary}}
\AddToHook{env/assumption/begin}{\crefalias{theorem}{assumption}}
\AddToHook{env/definition/begin}{\crefalias{theorem}{definition}}
\AddToHook{env/example/begin}{\crefalias{theorem}{example}}
\AddToHook{env/remark/begin}{\crefalias{theorem}{remark}}

\crefname{lemma}{Lemma}{Lemmata}

\def\scal#1{\langle #1 \rangle}

\newcommand{\vn}[1]{{\vert\kern-0.23ex\vert\kern-0.23ex\vert #1 
    \vert\kern-0.23ex\vert\kern-0.23ex\vert}}
\usepackage{mathtools}

\allowdisplaybreaks

\newcommand{\D}{\partial}

\def\eps{\varepsilon}

\newcommand{\R}{{\mathbb{R}}}

\renewcommand{\P}{\mathbb{P}}
\def\T{\mathbb{T}}

\newcommand{\E}{\mathbb{E}}
\newcommand{\Z}{\mathbb{Z}}

\newcommand{\N}{\mathbb{N}}

\newcommand{\cC}{\mathcal{C}}

\def\path{\mathrm{path}}

\author{Konstantinos Dareiotis\thanks{School of Mathematics, University of Leeds, Leeds, U.K., \url{K.Dareiotis@leeds.ac.uk}}\,\,
and M\'at\'e Gerencs\'er\thanks{Institute of Analysis and Scientific Computing, TU Wien, Austria, \url{mate.gerencser@asc.tuwien.ac.at}}\,}
\title{It\^o perspective on variance renormalisation}
\begin{document}

\maketitle

\begin{abstract}
We show that the It\^o solutions of the nonlinear stochastic heat equation
$$
 \D_t u^\eps- \Delta u^\eps =\eps^{3/4} g (u^\eps) \nabla \xi_\eps,
$$
where $ \xi_\eps$ denotes the mollification in space at scale $\eps>0$ of a space-time white noise $\xi$,
converge in law, as $\eps\to 0$, to the solution of the stochastic heat equation with right-hand side $cg'g(u)\xi$ with a constant $c>0$.
Since the noise $\nabla\xi$ is supercritical, the small prefactor is not unexpected to obtain a limit, but the exponent $3/4$ is not predicted by naive scaling arguments.
The case $g(u)=u$, modulo a Cole–Hopf transform, corresponds to the result of \cite{H25} for the KPZ equation. Our argument is relatively short and relies solely on stochastic analytic techniques.
\end{abstract}

\tableofcontents

\section{Introduction}
In recent years, there has been some interest in stochastic PDEs driven by noise which is rougher than white (see \cite{long,Mate,GT,H25,LiRen}). 
Consider, formaly, the nonlinear stochastic PDE of the form:
\begin{equ}\label{eq:intro}
    \D_t u= \Delta u +  g (u) (-\Delta)^{\gamma/2} \xi 
\end{equ}
in $1+1$ dimensions, where $\gamma>0$ and $\xi$ is a space-time white noise. If $u$ satisfies \eqref{eq:intro}, then 
 the rescaled (under parabolic scaling ) solution $u^{(\lambda)}_t(x):=u_{\lambda^2 t}(\lambda x)$ satisfies 
\begin{equ}
   \D_t u^{(\lambda)}=\Delta u^{(\lambda)}+\lambda^{1/2-\gamma}g(u^{(\lambda)})(-\Delta)^{\gamma/2}  \xi^{(\lambda)},
\end{equ}
where $\xi^{(\lambda)}(t,x):=\lambda^{3/2}\xi(\lambda^2t,\lambda x)$ is a space-time white noise . For any $\gamma < 1/2$ the nonlinearity vanishes on small scales, and so the equation is scaling subcritical. 
This heuristic however gives an incorrect prediction for solvability of the equation:
the solution theory of \eqref{eq:intro} only extends up to $\gamma<1/4$ \cite{long, Mate}. This barrier $1/4$ is due to a phenomenon of variance blow-up. To make things concrete, let us fix the spatial domain to be the $1$-dimensional torus $\T=\R/\Z$, let $\xi_{\eps}$ be a smooth in space and time approximation of $\xi$ on parabolic scale $\eps$, and let $X_{\eps}$ be the solution of the linear equation $\D_t X_{\eps}=\Delta X_{\eps}+(-\Delta)^{\gamma/2}\xi _{\eps}$ starting from zero, that is,
$$
X_{\eps}(t,x) := \int_0^t \int_\T p_{t-s}(x-y) (-\Delta)^{\gamma/2}\xi _{\eps}(ds,dy).
$$
Then for any $\gamma\geq 1/4$ and any nontrivial test function $\varphi\in C_c^\infty(\R_+\times\T)$, the variance of  $\scal{ X_{\eps} (-\Delta)^{\gamma/2}\xi _{\eps},\varphi}$ diverges as $\eps\to 0$. Therefore, already the first nontrivial step of Picard's iteration for the mild formulation of the equation can not be performed.
Such a divergence can not be removed by an additive renormalising counterterm.
The blow-up of the variance occurs also if one takes an approximation $\xi_\eps$ only in the spatial variable and interprets the product through an It\^o integral
\begin{equ}
\scal{X_\eps(-\Delta)^{\gamma/2}\xi_\eps,\varphi}:=\int_\R\int_\T\varphi(t,x)\int_0^t \int_\T p_{t-s}(x-y)(-\Delta)^{\gamma/2}\xi_\eps(ds,dy)(-\Delta)^{\gamma/2}\xi_\eps(dt,dx).
\end{equ}

The case $g(u)=u$ is of particular interest due to its relation to the KPZ equation via the Cole-Hopf transform: formally $h=\log u$ solves
\begin{equ}\label{eq:KPZ}
    \partial_t h=\Delta h+(\nabla h)^2+ (-\Delta)^{\gamma/2} \xi .
\end{equ}
When $\gamma=1$ (recall that $\nabla\xi\overset{\mathrm{law}}{=}(-\Delta)^{1/2}\xi$), \cite{H25} showed how the variance blow-up can be tamed by a \emph{multiplicative} renormalisation: 
the solutions $h^\eps$ of the equations
\begin{equ}\label{eq:KPZ-var}
    \partial_t h^\eps=\Delta h^\eps+(\nabla h^\eps)^2-C_\eps+\eps^{3/4}\nabla\xi_\eps,
\end{equ}
with an appropriate $C_\eps\sim \eps^{-1}$,
converge in law to the solution of 
\begin{equ}\label{eq:KPZ-var}
    \partial_t h=\Delta h+(\nabla h)^2+c\xi,
\end{equ}
 with some constant $c>0$. 
The same conclusion is expected to hold when one replaces $\eps^{3/4}\nabla\xi_\eps$ by $\eps^{\gamma-1/4}(-\Delta)^{\gamma/2}\xi_\eps$,
for any $\gamma>1/4$ (this is proved for a variant of the problem \cite{GT}).

The KPZ equation has the special advantage that the so-called Da Prato-Debussche trick is applicable, which isolates the most singular non-vanishing object 
$\eps^{3/2}(\nabla P\ast\xi_\eps)^2$. For each $\eps>0$ this random field belongs to the second Wiener chaos, but it converges in law to a multiple of a white noise.
In the language of the tree-notation, thanks to  this trick every new noise appears at the leaves of  trees.
This greatly simplifies both the algebra and the probabilistic estimates for \eqref{eq:KPZ-var}.
Even with this simplification, the proof of \cite{H25} builds on the theory of regularity structures \cite{H0}, therefore the overall input to the proof is rather lengthy.
In equations where no such trick is available, the regularity structure machinery even with much less singular noise is highly demanding, see \cite{MY} treating the $2$-dimensional generalised parabolic Anderson model.
It is currently out of reach to treat the $3$-dimensional version of \cite{MY}, which scaling-wise corresponds to the $1+1$-dimensional space-time white noise-driven nonlinear multiplicative stochastic heat equation treated in the present paper.
The main difference is that the latter is amenable to It\^o calculus, which our proof leverages.

Our main result reads as follows. Denote $\xi_\eps:=P_{\eps^2} \xi$, where $(P_t)_{t\geq 0}$ is the heat semigroup on $\T$ acting only in the spatial variable.
  \begin{theorem}\label{thm:main}
      Let $\psi:\T\to\R$ be $1/4$-H\"older continuous and let $g:\R\to\R$ have bounded derivatives of order $1$, $2$, and $3$.
      For $\eps>0$, let $u^\eps$ denote the It\^o solution of 
      \begin{equs}      \label{eq:main_equation_eps}
    \D_t u^\eps= \Delta u^\eps + \eps^{3/4} g (u^\eps) \nabla \xi_\eps, \qquad u|_{t=0}= \psi
\end{equs}
on $[0,1]\times\T$,
and let $u$ denote the It\^o solution of
\begin{equs}      \label{eq:main_equation_limit}
    \D_t u= \Delta u +  \frac{1}{8\sqrt[\leftroot{2}\uproot{2}4]{\pi}
}g'g (u)\xi, \qquad u|_{t=0}= \psi
\end{equs}
on $[0,1]\times\T$.
Then, as $\eps \to 0$,  $u^\eps$  converge weakly to $u$, as random variables with values in   $C([0,1] \times \mathbb{T})$. 
  \end{theorem}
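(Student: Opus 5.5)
Our plan is to work with the mild formulation and isolate the only non-vanishing part of the noise term through a decomposition of the stochastic integral. Write
\[
u^\eps_t = P_t\psi + \eps^{3/4}\int_0^t P_{t-s}\big(g(u^\eps_s)\,\nabla\xi_\eps(ds)\big)=:P_t\psi+ N^\eps_t .
\]
First we would prove tightness of $(u^\eps)$ in $C([0,1]\times\T)$. The natural route is to integrate by parts the gradient onto the heat kernel, which gives $\eps^{3/4}\int P_{t-s}\nabla(g(u)\xi_\eps)+\eps^{3/4}\int P_{t-s}(g'(u)\nabla u\,\xi_\eps)$. The second term is the problematic one, since $\nabla u^\eps$ is of size $\eps^{-3/4}$ in a suitable sense. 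We would therefore split $u^\eps_s$ near the noise increment as
\[
u^\eps_s \approx u^\eps_{s-\delta}\ \text{(frozen part)} + \eps^{3/4}g(u^\eps_{s-\delta})\,Z^\eps_{s-\delta,s},
\]
where $Z^\eps_{r,s}=\int_r^s P_{s-\theta}\nabla\xi_\eps(d\theta)$ is the local linear fluctuation and $\delta$ is a mesoscopic time with $\eps^2\ll\delta\ll1$. Uniform moment bounds for $u^\eps$ in $L^p$ and in H\"older norms (of exponent below $1/4$, which is compatible with $\psi\in C^{1/4}$) would be obtained by BDG plus a Gronwall argument using this expansion. The quadratic variation of $N^\eps$ contains $\eps^{3/2}\|P_{t-s}\nabla p_{\eps^2}\|^2$, which is of order $\eps^{3/2}(t-s+\eps^2)^{-3/2}$. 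Integrated in $s$, this gives $O(1)$ but concentrated near the diagonal, so moment bounds close only after showing that the leading drift-like piece below is bounded.

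The key step is the identification of the It\^o correction. Substituting the expansion into $g(u^\eps_s)=g(u^\eps_{s-\delta})+\eps^{3/4}g'g(u^\eps_{s-\delta})Z^\eps_{s-\delta,s}+R$, the noise term becomes
\[
\eps^{3/4}\int P_{t-s}\big(g(u^\eps_{s-\delta})\nabla\xi_\eps\big) + \eps^{3/2}\int P_{t-s}\big(g'g(u^\eps_{s-\delta})\,Z^\eps_{s-\delta,s}\,\nabla\xi_\eps(ds)\big)+\text{rem}.
\]
The first term is a martingale with frozen coefficient. Its covariance, after testing against $\varphi$, is $\eps^{3/2}\|\nabla p_{\eps^2}\|^2\sim\eps^{3/2}\eps^{-3}$ in space but gets smoothed by $P$, and it tends to $0$ in law when tested. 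Making this precise requires a careful look at the regularity exponent; it is the reason for the exponent $3/4$. The second term lives in the second chaos of the increments over $[s-\delta,s]$. Here we would compute its conditional covariance given $\mathcal F_{s-\delta}$. The product $\eps^{3/2}Z^\eps\nabla\xi_\eps$ decorrelates on time scale $\eps^2$ and space scale $\eps$, so by a fourth-moment/martingale CLT argument its time–space integral against smooth coefficients converges to $c\,\xi$ for a white noise $\xi$ independent in the limit. The constant is
\[
c^2=\lim_{\eps\to0}\eps^{3}\int_0^\infty\!\!\int_\T \big(\nabla p_{\eps^2}\ast\nabla p_{r+\eps^2}\ast\nabla p_{\eps^2}\big)^2\,dx\,dr ,
\]
a Gaussian integral which we expect to evaluate to $(8\pi^{1/4})^{-2}$. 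The $\eps^{3/4}$ scaling makes this exactly order one. We would then show that the remainder $R$ (third-order Taylor terms, controlled by bounded $g'''$) and the error from freezing at $s-\delta$ vanish in $L^2$, using the H\"older bounds from the first step.

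Finally, to identify the limit we would use the martingale problem. For any limit point $(u,\text{martingale } M)$, the process $\langle u_t,\varphi\rangle-\langle\psi,\varphi\rangle-\int_0^t\langle u_s,\Delta\varphi\rangle ds$ is a martingale with quadratic variation $\int_0^t\|c\,g'g(u_s)\varphi\|_{L^2}^2ds$, because the cross-variations vanish. We then use uniqueness in law of the limit equation, which holds since $g'g$ is Lipschitz under the assumptions on $g$. The main obstacle is the second step. There we need the CLT for the second-chaos object with random, adapted coefficients $g'g(u^\eps_{s-\delta})$, and we need the first-chaos frozen martingale to vanish rather than blow up. Our first attempt would be to compute the quadratic variation of the full martingale $\langle N^\eps,\varphi\rangle$ directly via It\^o's formula and show it converges in $L^1$ to $c^2\int\|g'g(u_s)\varphi\|^2$, with the first-chaos contribution negligible.
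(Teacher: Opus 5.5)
\textbf{Gap: the $\eps$-uniform a priori bound.} Your first step does not work as stated, and everything else rests on it.

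\emph{Why exponent below $1/4$ fails.} A uniform bound in a H\"older norm of exponent $\alpha<1/4$ does not control the noise term. Pair $\eps^{3/4}\nabla\xi_\eps$ with an integrand oscillating at scale $\eps$ with amplitude $\eps^{\alpha}$. The variance is then of order $\eps^{2\alpha-1/2}$, so Lemma~\ref{lem:basic_estimate_K} is sharp. Uniformity in $\eps$ therefore forces exactly the exponent $1/4$. It must moreover hold in the $C^{1/4}(\T;L_p)$ sense, with the supremum outside $L_p$, since Kolmogorov would cost exponent.

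\emph{Why the expansion does not rescue it.} Your mesoscopic expansion is circular: bounding the Taylor remainder and the freezing error requires precisely this $1/4$-control of $u^\eps$.

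\emph{Where the $O(1)$ really sits.} Your heuristic misplaces the difficulty. With a smooth frozen coefficient the quadratic variation is $\eps^{3/2}\int_0^t(t-s+\eps^2)^{-3/2}ds\sim\eps^{1/2}$, not $O(1)$; indeed $\|J^\eps_{0,t}[1]\|_{\cC^0_p}\lesssim\eps^{1/4}$. The $O(1)$ part comes entirely from the scale-$\eps$, amplitude-$\eps^{1/4}$ fluctuations of $g(u^\eps)$, and this is what the norm must capture.

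\emph{The missing idea} is Lemma~\ref{lem:J_in_Lp}. In $\int p_{t-r}(x-y)f_r(y)\nabla p_{\eps^2}(y-z)\,dy$, write $f_r(y)=f_r(z)+(f_r(y)-f_r(z))$.
The first piece, after moving the derivative onto the heat kernel, contributes $\eps^{3/2}(\eps^2+t-r)^{-3/2-\alpha}\|f\|^2$.
For the second piece, write $\nabla p_{\eps^2}(y-z)=\eps^{-2}(y-z)p_{\eps^2}(y-z)$, apply Cauchy--Schwarz, and use $\int|y-z|^{5/2}p_{\eps^2}(y-z)\,dy\lesssim\eps^{5/2}$. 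The resulting factor $\eps^{-3/2}$ is cancelled exactly by $\eps^{3/2}$ when $f$ is $1/4$-H\"older.
This yields \eqref{eq:J-C1/4}, with the integrable kernel $(t-r)^{-3/4}$ in front of $[f]^2_{\cC^{0,1/4}_p}$, and the estimate closes by Henry--Gronwall (Theorem~\ref{lem:niform_bound_1/4}).

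\textbf{Identification step.} Granting that bound, your route (mesoscopic freezing, convergence of the quadratic variation, martingale problem) is a reasonable alternative to the paper's, but it needs two corrections.

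First, the frozen coefficient must be transported by the heat flow: use $g(P_\delta u^\eps_{s-\delta})$, not $g(u^\eps_{s-\delta})$. The difference $(I-P_\delta)u^\eps_{s-\delta}$ is small in $\cC^0_p$ but not in $\cC^{1/4}_p$. With your freezing the first-chaos term therefore keeps an $O(1)$ variance. With heat-flow freezing it is $O(\eps^{1/4}\delta^{-1/8})$, by Lemma~\ref{lem:basic_estimate_K} with $\alpha=1/2$.

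Second, your formula for $c^2$ has one gradient too many. Since $\nabla p_{\eps^2}\ast\nabla p_{r+\eps^2}\ast\nabla p_{\eps^2}=\nabla^3p_{r+3\eps^2}$, it is of order $\eps^{-2}$. The correct quantity is $\eps^3\int_0^\infty\|\nabla^2p_{2\eps^2+r}\|_{L_2(\T)}^2\,dr\to(64\sqrt\pi)^{-1}$.

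\textbf{Comparison with the paper.} The step you flag as the main obstacle, a CLT with random adapted coefficients, is sidestepped in the paper as follows.
It takes Skorokhod limits of $u^\eps$ jointly with $\beta^{\eps,n}=K^\eps_{0,\cdot}[e_nJ^\eps_{0,\cdot}[1]]$.
By L\'evy's characterisation, $8\pi^{1/4}\widehat\beta^n$ are independent Brownian motions.
After a spectral truncation, the frozen main term is written as $\sum_{n\le M}\int(F^\eps_r,e_n)\,d\widehat\beta^{\eps,n}_r$, so Kurtz--Protter applies.
The freezing is then removed by stochastic sewing.
Your route instead requires a spatial law of large numbers for $\eps^{3/2}|\nabla P_{\eps^2}Z^\eps_{s-\delta,s}|^2$ against $\mathcal{F}_{s-\delta}$-measurable weights, plus a martingale representation theorem. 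This is feasible, but it is extra work the proposal does not address.
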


  \begin{remark}
      By the It\^o solution we mean a probabilistically strong, analytically mild solution, see Definition \ref{def:soln} below.
  \end{remark}

Taking the Cole-Hopf transform of the result of
\cite{H25}, the present paper with $g(u)=u$ gives a new and simpler proof in the case of spatial mollification, and generalise to the case of nonlinear $g$.
The It\^o structure of the equation allows for a rather down-to-earth argument: the only estimates used are standard analytic inequalities (Cauchy-Schwarz, Minkowski), stochastic analytic inequalities (BDG), heat kernel estimates, and the stochastic sewing lemma \cite{SSL}.

\begin{remark}
    Although the problem is more singular than the fractional Brownian-driven SDE example in \cite{H25} and the $2$-dimensional gPAM studied in \cite{MY} (in both cases the limiting equation is only ``borderline'' ill-posed in the classical sense), the change of nonlinearity $g\leadsto cgg'$ is the same.
\end{remark}

\begin{remark} 
   As in some other examples \cite{H25, GT, MY}, the limiting noise plays a similar role in the equation as in the prelimit  (i.e. a multiplicative noise, albeit with a different nonlinearity).
   However, there are also examples of variance renormalisation where the new noise appears at a different part of the equation:
in \cite{BBMvar} it is shown that a supercritical random initial condition for the Benjamin-Bona-Mahony equation,
appropriately renormalised, produces a random forcing in the limit.
\end{remark}

\begin{remark}
Since the aforementioned diverging variance is of order $\eps^{-3/2}$, the  prefactor $\eps^{3/4}$ ensures that the first nontrivial Picard iterate stays bounded in $L^2$.
    One can also heuristically guess $\eps^{3/4}$ to be the correct prefactor to take since it makes, in a certain norm, the noise to be of the same magnitude as $(-\Delta)^{1/4}\xi_\eps$ which is borderline for the variance blow-up: setting
    \begin{equ}
        \|\zeta\|_{[\alpha]}:=\sup_{s\in[0,1]}\sup_{n\in\N} \frac{\|\zeta(\mathbf{1}_{[s,s+n^{-2}]}\otimes e_n)\|_{L_2(\Omega)}}{n^{-\alpha}},
    \end{equ}
    where $(e_n)_{n\in\N}$ is the usual eigenbasis of the Laplacian,
    one has that both $\|(-\Delta)^{1/4}\xi_\eps\|_{[-7/4]}$ and $\|\eps^{3/4}\nabla\xi_\eps\|_{[-7/4]}$ are uniformly bounded in $\eps\in(0,1]$ and neither go to $0$ as $\eps\to 0$.
\end{remark}

The rest of the paper is structured as follows. We continue the introduction by setting up the notation for the rest of the paper in Section \ref{sec:notation}. In Section \ref{sec:apriori} we establish a uniform in $\eps$ bound on $u^\eps$, implying tightness of their laws.
In Section \ref{sec:limiting} we identify the law of any subsequential limit as that of the solution of \eqref{eq:main_equation_limit}. In the passage to the limit the characterisation of stochastic integrals via the stochastic sewing lemma turns out to be rather convenient.

\subsection{Setup and notation}\label{sec:notation}
Set $\mathbb{N}_0:= \mathbb{N}\cup \{0\}$. Set $[0,1]_\leq^2=\{(s,t)\in[0,1]^2:s\leq t\}$
and $[0,1]_\leq^3=\{(s,v,t)\in[0,1]^2:s\leq v\leq t\}$.
We consider a filtered probability space $(\Omega, \mathcal{F}, \mathbb{F}=(\mathcal{F}_t)_{t \in [0,1]}, \P)$ carrying a sequence $(w^n)_{n=1}^\infty$ of independent standard $\mathbb{F}$-Brownian motions on $[0,1]$. The predictable $\sigma$-algebra on $\Omega \times [0,1]$ will be denoted by $\mathcal{P}$. A $\mathcal{P} \otimes \mathcal{B}(\mathbb{T})$-measurable map $u : \Omega \times [0,1] \times \mathbb{T} \to \mathbb{R}$ will be called predictable random field. We denote by $\mathscr{L}_2$ the collection of all predictable random fields $u$, such that 
\begin{equs}
    \E \int_0^1 \int_\mathbb{T} |u_t(x)|^2  \, dx dt < \infty . 
\end{equs}
Let $p \in [1, \infty)$ and  $\alpha \in (0,1]$. By $L_p$ we denote the usual $L_p(\Omega, \mathcal{F}, 
\mathbb{P})$ space and by $\|\cdot\|_{L_p}$ we mean the corresponding norm.
Classical H\"older spaces are defined via the norms
\begin{equ}
   \|f\|_{C^\alpha(\T)}:=\sup_{x\in \T} |f(x)|+ \sup_{x\neq y\in \T} \frac{|\varphi(x)-\varphi(y)|}{|x-y|^\alpha}.
\end{equ}
For a measurable map $\varphi: \Omega \times \mathbb{T} \to \R$, we set 
\begin{equs}
 \|\varphi\|_{\mathcal{C}^0_p} := \sup_{x\in \mathbb{T}} \|\varphi(x)\|_{L_p},
\qquad  [\varphi]_{\mathcal{C}^\alpha_p}:= \sup_{x\neq y\in\T} \frac{\|\varphi(x)-\varphi(y)\|_{L_p}}{|x-y|^\alpha},
\end{equs}
and
\begin{equs}
\|\varphi\|_{\mathcal{C}^\alpha_p}=\|\varphi\|_{\mathcal{C}^0_p}+[\varphi]_{\mathcal{C}^\alpha_p}.
\end{equs}
Similarly, for a measurable map $u: \Omega \times[0,1] \times \mathbb{T}\to \R$, we set 
\begin{equs}
 \|u\|_{\mathcal{C}^{0,0}_p([s,t])} := \sup_{r \in [s,t]}\|u_r\|_{\mathcal{C}^0_p},
 \qquad    [u]_{\mathcal{C}^{0,\alpha}_p([s,t])} &:= \sup_{r \in [s,t]} [u_r]_{\mathcal{C}^\alpha_p}, 
\end{equs}
and 
\begin{equs}
      \|u\|_{\mathcal{C}^{0,\alpha}_p([s,t])}:= \|u\|_{\mathcal{C}^{0,0}_p([s,t])}+ [u]_{\mathcal{C}^{0,\alpha}_p([s,t])}.
\end{equs}
In other words, $\cC^\alpha_p$ is really a shorthand for $C^\alpha(\T; L_p)$.
 Let $(e_n)_{n=1}^\infty$ be the orthonormal basis of $L_2(\mathbb{T})$ given by 
\begin{equs}
    e_1(x)\equiv 1, \qquad e_n(x)
=
\begin{cases}
    \sqrt{2} \sin ( k 2\pi x), \qquad  &\text{if }n=2k
    \\
    \sqrt{2} \cos ( k 2\pi x), \qquad  &\text{if }n=2k+1,
\end{cases}
\,\text{for $n \geq 2$}.
\end{equs}
Note that with a universal constant $C$,  for any $n \in \mathbb{N}$ and any $\alpha \in [0,1]$, one has
\begin{equs} \label{eq:growth_basis}
    \| e_n\|_{C^\alpha} \leq C n^{\alpha}. 
\end{equs}
The space-time noise on $[0,1] \times \mathbb{T}$ and its spatial mollification are given by 
\begin{equs}
    \xi = \sum_{n=1}^\infty e_n \D_t w^n,\qquad \xi_{\eps}= \sum_{n=1}^\infty (P_{\eps^2}e_n) \D_t w^n,
\end{equs}
for $\eps>0$, 
where $(P_t)_{t\in [0,1]}$ denotes the heat semigroup acting in the spatial variable, that is 
\begin{equs}
    P_tf(x) = \int_{\mathbb{T}} p_t(x-y) f(y) \, dy, \qquad p_t(x)=  \frac{1}{\sqrt{4\pi t}} \sum_{k \in \mathbb{Z}} \exp \Big( \frac{(x-k)^2}{4t} \Big). 
\end{equs}
We also use the notation $P_t f$ for space-time functions, still acting only in the spatial variable, for example $\xi_\eps=P_{\eps^2}\xi$.
For $f \in \mathcal{L}_2$, the stochastic integral of $f$ with respect to the white noise $\xi$ is well defined (see, e.g., \cite{Walsh}). Furthermore, one has the identity
\begin{equs}    \label{eq:Walsh_vs_Krylov}
    \int_0^t \int_{\mathbb{T}} f_s(x) \,  \xi(dx, ds)= \sum_{n=1}^\infty \int_0^t (f_s, e_n) \, dw^n_s, \qquad t \in [0,1],
\end{equs} 
where $(\cdot, \cdot)$ denotes the inner product in $L_2(\mathbb{T})$. 
In addition, it is easy to see that if $f \in \mathcal{L}_2$, then $\nabla P_{\eps^2}f \in \mathcal{L}_2$, the integral of $f$ with respect to $\nabla \xi_{\eps}$ is well defined,  and it satisfies 
\begin{equs}
    \int_0^t \int_{\mathbb{T}} f_r(x) \, \nabla \xi_\eps(dx,dr)= \int_0^t \int_{\mathbb{T}} \nabla P_{\eps^2}f_r(z) \,  \xi(dz, dr) .      \label{eq:integration_white_noise}
\end{equs}
Finally, the following shorthand for stochastic integrals will be  used throughout the paper
\begin{equs}[eq:K-J-notation]
    K_{s,t}^\eps[f]& := \eps^{3/4}\int_s^t \int_{\mathbb{T}} f_r(x) \, \nabla \xi_\eps(dx,dr),
    \\
    J^\eps_{s,t}[f](x)&:= \eps^{3/4}\int_s^t \int_{\T}p_{t-r}(x-y) f_r(y) \nabla \xi_\eps(dy, dr), \qquad x \in \mathbb{T},
   \end{equs}
   for any $f$ such that the corresponding integrals above are meaningful. 
   Often the function $f$ has more than one time parameter, in which case the running variable (in the above formulae, $r$) is denoted by $\cdot$. Occasionally such integrals appear in a nested way, the two running variables are then distinguished by using $\cdot$ and $\diamond$. As an example,
   \begin{equ}
       K^{\eps}_{s,t}\big[J_{s,\cdot}^\eps[P_{\diamond-s}f]\big]=\eps^{3/2}\int_s^t\int_\T\int_s^r\int_\T p_{r-\tau}(x-y)P_{\tau-s}f_s(y)
       \nabla\xi_\eps(dy,d\tau)
       \nabla\xi_\eps(dx,dr).
   \end{equ}

The following assumption encompasses the main conditions of Theorem \ref{thm:main}.
\begin{assumption}       \label{as:g}
    One has $\psi\in C^{1/4}(\T)$ and the map $g: \R \to \R$ is three time continuously differentiable and there exists a constant $C_g$ such that for all $r \in \mathbb{R}$ we have 
    \begin{equs}
      |g(0)|+  |g'(r)|+|g''(r)|+|g'''(r)|  \leq C_g. 
    \end{equs}
\end{assumption}

\begin{definition}\label{def:soln}
    A predictable random field $u^\eps$ is called solution of \eqref{eq:main_equation_eps} if $\|u^\eps\|_{\mathcal{C^{0,0}_2}([0,1])} < \infty $, it is continuous in $(t,x)$, and for all $(t,x) \in [0,1] \times \mathbb{T}$, with probability one we have
  \begin{equs}
      u^{\eps}_t(x) = P_t\psi(x)+ \eps^{-3/4}\int_0^t \int_{\T}p_{t-r}(x-y) g(u^{\eps}_r(y))  \nabla \xi_\eps(dy, dr).
  \end{equs}
Similarly,  a predictable random field $u$ is called solution of \eqref{eq:main_equation_limit} if $\|u\|_{\mathcal{C^{0,0}_2}([0,1])} < \infty $, it is continuous in $(t,x)$, and for all $(t,x) \in [0,1] \times \mathbb{T}$, with probability one we have
  \begin{equs}
      u_t(x) = P_t\psi(x)+ \frac{1}{4\sqrt[\leftroot{2}\uproot{2}4]{\pi}}\int_0^t \int_{\T}p_{t-r}(x-y) g'g(u_r(y))  \xi (dy, dr).
  \end{equs}
\end{definition}

\begin{remark}
    It is fairly standard that Assumption \ref{as:g} is more than enough to guarantee that both \eqref{eq:main_equation_eps} for any $\eps>0$ and \eqref{eq:main_equation_limit} have a unique solution. For the latter one has to be slightly careful, since $g'g$ is only locally Lipschitz continuous. But since it is globally of linear growth, one can easily reduce to the globally Lipschitz case by a stopping time argument, see e.g. \cite{Martabook}.
\end{remark}

In proofs, the notation $a\lesssim b$ abbreviates the existence of $C>0$ such that $a\leq C b$, such that moreover $C$ depends only on the parameters claimed in the corresponding statement.

\section{A priori estimates}\label{sec:apriori}

\subsection{Estimate for the solutions}\label{sec:apriori-1}
The main purpose of this section is to prove the following estimate on $u^\eps$.

\begin{theorem}         \label{lem:niform_bound_1/4}
    Let Assumption \ref{as:g} hold.
    For all $p \in [1, \infty)$, there exists a constant $N=N(p,C_g)$ such that for all $\eps>0$ we have 
    \begin{equs}
        \| u^\eps\|_{\mathcal{C}^{0,1/4}_p([0,1])} \leq N (1+\|\psi\|_{C^{1/4}(\T)}).
    \end{equs}
\end{theorem}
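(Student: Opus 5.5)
The plan is a direct estimate on the mild formulation: BDG plus heat kernel bounds, closed by a short-time absorption argument.

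\textbf{Setup.} For fixed $\eps>0$ the noise $\nabla\xi_\eps$ is smooth in space. So it is standard that $\|u^\eps\|_{\mathcal{C}^{0,1/4}_p([0,1])}<\infty$, and the task is only to make the bound uniform in $\eps$. By \eqref{eq:integration_white_noise}, the stochastic term is
\begin{equ}
J^\eps_{0,t}[g(u^\eps)](x)=\eps^{3/4}\int_0^t\int_\T \nabla P_{\eps^2}\big(p_{t-r}(x-\cdot)\,g(u^\eps_r)\big)(z)\,\xi(dz,dr).
\end{equ}
Applying BDG and then Minkowski's inequality (with $p\ge 2$), its $L_p$ norm is controlled by
\begin{equ}
\Big(\int_0^t\Big\|\int_\T|F_{r}(z)|^2dz\Big\|_{L_{p/2}}dr\Big)^{1/2},\qquad F_r:=\eps^{3/4}\nabla P_{\eps^2}\big(p_{t-r}(x-\cdot)g(u^\eps_r)\big).
\end{equ}

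\textbf{Splitting the derivative.} Write
\begin{equ}
F_r(z)=\eps^{3/4}\int_\T\nabla p_{\eps^2}(z-y)\,p_{t-r}(x-y)\big(g(u^\eps_r(y))-g(u^\eps_r(z))\big)\,dy+\eps^{3/4}g(u^\eps_r(z))\,\nabla P_{\eps^2}p_{t-r}(x-\cdot)(z).
\end{equ}
The two pieces are bounded separately.
\begin{itemize}
\item \emph{Second piece.} Here the derivative falls on the heat kernel. We have $\int_0^t\|\nabla p_{t-r+\eps^2}\|_{L_2}^2dr\lesssim\eps^{-1}$, so this piece contributes at most $\eps^{1/2}(1+\|u^\eps\|_{\mathcal{C}^{0,0}_p})\lesssim 1+\|u^\eps\|_{\mathcal{C}^{0,0}_p}$. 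This uses $|g(r)|\lesssim 1+|r|$.
\item \emph{First piece (commutator).} Use $\|g(u(y))-g(u(z))\|_{L_p}\le C_g[u]_{\mathcal{C}^{1/4}_p}|y-z|^{1/4}$ together with $\int|\nabla p_{\eps^2}(w)||w|^{1/4}dw\lesssim\eps^{-3/4}$. The prefactor $\eps^{3/4}$ cancels exactly this, leaving a convolution of $p_{t-r}(x-\cdot)$ with an $L_1$-normalised kernel. Its $L_2(\T)$ norm is at most $\|p_{t-r}\|_{L_2}$, and $\int_0^t\|p_{t-r}\|_{L_2}^2dr\lesssim t^{1/2}$.
\end{itemize}
Altogether, on an interval $[s,s+h]$ restarted from $u^\eps_s$, I expect
\begin{equ}
\|u^\eps\|_{\mathcal{C}^{0,0}_p([s,s+h])}\lesssim \|u^\eps_s\|_{\mathcal{C}^{1/4}_p}+1+h^{1/4}[u^\eps]_{\mathcal{C}^{0,1/4}_p([s,s+h])}.
\end{equ}
The restart uses the mild formulation from time $s$, which is valid by the Markov/flow structure of the equation. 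The term $\|u^\eps\|_{\mathcal{C}^{0,0}_p}$ produced by the second piece is absorbed, since it comes with a small factor $\eps^{1/2}$ or, alternatively, $h^{1/2}$ after a Gronwall argument in time.

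\textbf{Hölder seminorm in $x$.} This is the step I expect to be the main obstacle. Take $x,x'$ with $|x-x'|=\delta$. The same decomposition applies with $p_{t-r}(x-\cdot)$ replaced by the difference $p_{t-r}(x-\cdot)-p_{t-r}(x'-\cdot)$.
\begin{itemize}
\item In the commutator piece, the goal is $\int_0^h\|p_{r}(x-\cdot)-p_{r}(x'-\cdot)\|_{L_2}^2dr\lesssim \delta^{1/2}h^{\kappa}$ for some $\kappa>0$. I would obtain this by interpolating between the trivial bound and the gradient bound: the difference is at most $\min(2\|p_r\|_{L_2},\delta\|\nabla p_r\|_{L_2})$. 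This interpolation is what yields exactly the exponent $1/4$ with room for a positive power of $h$.
\item In the second piece one needs $\eps^{3/2}\int_0^h\|\nabla P_{\eps^2}(p_r(x-\cdot)-p_r(x'-\cdot))\|_{L_2}^2dr\lesssim\delta^{1/2}$ uniformly in $\eps$. I would bound the left side by $\eps^{3/2}\min(\eps^{-1},\delta^2\eps^{-3})$ and check both regimes $\delta\lessgtr\eps$. This gives $\lesssim\delta^{1/2}$, with equality of scales at $\delta=\eps$; this balance is precisely where the exponent $3/4$ in the prefactor enters.
\end{itemize}
The deterministic term $P_t\psi$ is bounded by $\|\psi\|_{C^{1/4}}$ trivially.

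\textbf{Closing the estimate.} Combining the above gives
\begin{equ}
\|u^\eps\|_{\mathcal{C}^{0,1/4}_p([s,s+h])}\le C(1+\|u^\eps_s\|_{\mathcal{C}^{1/4}_p})+Ch^{\kappa}\|u^\eps\|_{\mathcal{C}^{0,1/4}_p([s,s+h])},
\end{equ}
with $C$ independent of $\eps$. I then fix $h$ (independent of $\eps$) with $Ch^\kappa\le1/2$, absorb the last term using the a priori finiteness from the setup, and iterate over the $1/h$ intervals covering $[0,1]$. This gives the claimed bound $N(1+\|\psi\|_{C^{1/4}(\T)})$. The case $p<2$ follows from $p=2$ by Hölder's inequality.
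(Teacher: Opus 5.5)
Your argument is correct and is essentially the paper's proof. Splitting $g(u(y))$ into $g(u(z))$ plus a commutator is exactly the $A_1/A_2$ decomposition of Lemma \ref{lem:J_in_Lp}; you use Minkowski plus Young where the paper uses Cauchy--Schwarz. The derivative-on-kernel term gives an $O(1)$ coefficient on the sup norm in the $\mathcal{C}^{1/4}_p$ bound and a small coefficient in the $\mathcal{C}^0_p$ bound, and your short-time absorption with restarts replaces the paper's two applications of Lemma \ref{lem:Gronwal}.

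One small correction: the uniform-in-$\eps$ smallness of that term in the sup bound must come from $(\eps^2+r)^{-3/2}\le \eps^{-3/2}r^{-3/4}$. This gives a factor $4h^{1/4}$ in the squared $L_p$ estimate. The factor $\eps^{1/2}$ is not uniform over $\eps>0$, and the power is $h^{1/4}$ rather than $h^{1/2}$, with no Gronwall step needed; the argument is unaffected.
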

The exact choice of the exponent $1/4$ is crucial, as is the fact that in the $\mathcal{C}^{0,1/4}_p([0,1])$ norm the suprema in time and space are outside of the $L_p$ norm in $\omega$ (exchanging them via Kolmogorov continuity would also cost in the regularity exponent).

We will repeatedly use the following standard heat kernel estimates. For any $\alpha \in[0,1]$, $\beta>-1$, and  $k \in \mathbb{N}_0$, there exists a constant $N$ such that for all $t \in (0,1]$ we have 
    \begin{equs}
  \| \nabla^k p_t\|_{L_2(\mathbb{T})} & \leq N t^{-k/2-1/4},  \label{eq:HK_0}
    \\
    \int_\T |x|^\beta |\nabla^k p_t(x)|dx&\leq N t^{(\beta-k)/2}, \label{eq:HK-power}\\
     \| \nabla^k p_t(x-\cdot)- \nabla^k p_t(x'-\cdot)\|_{L_2(\mathbb{T})} &\leq N |x-x'|^\alpha t^{-(\alpha+k)/2-1/4}.    \label{eq:HK_alpha}
    \end{equs}
Variants of the heat kernel estimates in the $\cC^\alpha_p$ spaces will also be used. Given $\alpha, \gamma \in [0,1]$ with $\gamma \geq \alpha$ and $k \in \mathbb{N}$, there exist a constant $N$ such that for all $p\in[1,\infty)$, all $\varphi \in \mathcal{C}^{\alpha}_p$ and all $t \in (0,1]$
we have 
\begin{equs}
    \|\nabla^k P_t\varphi\|_{\mathcal{C}^0_p} \leq N_1 t^{(\alpha-k)/2}\|\varphi\|_{\mathcal{C}^\alpha_p}         \label{eq:HK_omega_deriv}
    \\
      \|P_t\varphi\|_{\mathcal{C}^\gamma_p} \leq N_2 t^{(\alpha-\gamma)/2}\|\varphi\|_{\mathcal{C}^\alpha_p}.   \label{eq:HK_omega_holder}
\end{equs}
These follow from Minkowski's inequality (for the norm $\|\cdot\|_{L_p}$) and standard heat kernel properties.

\begin{lemma}     \label{lem:J_in_Lp}
   Let $p \in [2, \infty)$, $\alpha \in [0, 1/4]$.  There exists a constant $N=N(p, \alpha)$ such that for all $\eps>0$, all $(s,t)\in [0,1]^2_{\leq}$, and all $f\in\mathcal{C}^{0,\alpha}_p([s,t])$, the following bound holds:
   \begin{equs}
         \|J_{s,t}^\eps[f]\|_{\mathcal{C}^\alpha_p}^2&  \leq N  \eps^{3/2} \int_s^t (\eps^2+t-r)^{-\alpha-3/2}  \|f\|_{\mathcal{C}^{0,0}_p([s,r])}^2  \,  dr
      \\
    & \qquad \qquad + N  \int_s^t   (t-r)^{-\alpha-{1/2}}[f]_{\mathcal{C}^{0,1/4}_p([s,r])}^2 \, dr.   \label{eq:main_estimate_J}
   \end{equs}
\end{lemma}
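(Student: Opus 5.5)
We reduce to the white noise via \eqref{eq:integration_white_noise}: for fixed $x$,
\begin{equ}
J^\eps_{s,t}[f](x)=\eps^{3/4}\int_s^t\int_\T \nabla P_{\eps^2}\big(p_{t-r}(x-\cdot)f_r\big)(z)\,\xi(dz,dr),
\end{equ}
so by BDG (in the form \eqref{eq:Walsh_vs_Krylov} plus Minkowski, using $p\ge2$)
\begin{equ}
\|J^\eps_{s,t}[f](x)\|_{L_p}^2\lesssim \eps^{3/2}\int_s^t \Big\|\,\big\|\nabla P_{\eps^2}\big(p_{t-r}(x-\cdot)f_r\big)\big\|_{L_2(\T)}\Big\|_{L_p}^2dr\le \eps^{3/2}\int_s^t \int_\T\big\|\nabla P_{\eps^2}\big(p_{t-r}(x-\cdot)f_r\big)(z)\big\|_{L_p}^2dz\,dr .
\end{equ}
The key step is to split $f_r(y)=f_r(z)+(f_r(y)-f_r(z))$ inside the convolution: $\nabla P_{\eps^2}(p_{t-r}(x-\cdot)f_r)(z)=f_r(z)\,\nabla_z\big(p_{\eps^2}\ast p_{t-r}\big)(x-z)+\int \nabla p_{\eps^2}(z-y)p_{t-r}(x-y)(f_r(y)-f_r(z))dy$, where the first term uses the semigroup property $P_{\eps^2}p_{t-r}=p_{\eps^2+t-r}$. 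The first term gives, via \eqref{eq:HK_0} with $k=1$, $\|f\|^2_{\mathcal C^{0,0}_p}\,(\eps^2+t-r)^{-3/2}$, i.e. the first term of \eqref{eq:main_estimate_J} with $\alpha=0$. For the second, Minkowski in $L_p$ bounds it by $[f]_{\mathcal C^{0,1/4}_p}\int|\nabla p_{\eps^2}(z-y)|\,|z-y|^{1/4}p_{t-r}(x-y)dy$; squaring, integrating in $z$ and using Young plus \eqref{eq:HK-power} ($\int|y|^{1/4}|\nabla p_{\eps^2}|\lesssim \eps^{-3/4}$) and $\|p_{t-r}\|_{L_2}^2\lesssim (t-r)^{-1/2}$, we get $\eps^{3/2}\cdot\eps^{-3/2}(t-r)^{-1/2}[f]^2$, cancelling the prefactor exactly — this is where the exponents $3/4$ and $1/4$ match, and yields the second term for $\alpha=0$.

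For the Hölder seminorm, repeat with $p_{t-r}(x-\cdot)$ replaced by $p_{t-r}(x-\cdot)-p_{t-r}(x'-\cdot)$. In the first term we get $\|\nabla p_{\eps^2+t-r}(x-\cdot)-\nabla p_{\eps^2+t-r}(x'-\cdot)\|_{L_2}^2\lesssim|x-x'|^{2\alpha}(\eps^2+t-r)^{-\alpha-3/2}$ by \eqref{eq:HK_alpha}; in the second, $\|p_{t-r}(x-\cdot)-p_{t-r}(x'-\cdot)\|_{L_2}^2\lesssim |x-x'|^{2\alpha}(t-r)^{-\alpha-1/2}$, again by \eqref{eq:HK_alpha} with $k=0$. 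Dividing by $|x-x'|^{2\alpha}$ and taking suprema gives \eqref{eq:main_estimate_J}; the sup norms over $[s,r]$ trivially dominate the values at time $r$.

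The main obstacle is the second (commutator) term: one must be careful that Young's inequality is applied to $\int|\nabla p_{\eps^2}(z-y)||z-y|^{1/4}\,|h(y)|dy$ as an $L_1\ast L_2$ convolution in $z$, with $h=p_{t-r}(x-\cdot)$ or its difference, and that the $L_p(\Omega)$ norm is moved inside the $y$-integral by Minkowski before using the $\mathcal C^{1/4}_p$ seminorm (which is why suprema outside $L_p$ are needed). Also $\alpha\le1/4$ only enters through integrability/consistency with \eqref{eq:HK_alpha}; no restriction beyond $\alpha\in[0,1]$ is used in the estimate itself.
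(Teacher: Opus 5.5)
Your proposal is correct and follows essentially the paper's proof: BDG plus Minkowski, the same splitting $f_r(y)=f_r(z)+(f_r(y)-f_r(z))$, the semigroup property for the first piece, and \eqref{eq:HK_0}/\eqref{eq:HK_alpha} for the resulting kernels. The one difference is in the commutator piece: you use Minkowski and Young's $L_1\ast L_2$ inequality with $\int_\T|w|^{1/4}|\nabla p_{\eps^2}(w)|\,dw\lesssim\eps^{-3/4}$, whereas the paper uses a weighted Cauchy--Schwarz based on $\nabla p_{\eps^2}(y-z)\propto\eps^{-2}(y-z)p_{\eps^2}^{1/2}(y-z)p_{\eps^2}^{1/2}(y-z)$. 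Both give the same factor $\eps^{-3/2}$, and your version avoids relying on that Gaussian identity for the periodised kernel.
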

\begin{proof}
   For arbitrary $x,x' \in \mathbb{T}$, by the definition of $J^\eps_{s,t}[f]$ in  \eqref{eq:K-J-notation}, the identity \eqref{eq:integration_white_noise}, and the  Burkholder-Davis-Gundy (BDG) inequality,   we have
    \begin{equs}
      \|J_{s,t}^\eps&[f](x)-J_{s,t}^\eps[f](x')\|_{L_p}^2
        \\
        & \lesssim  \eps^{3/2} \Big\| \int_s^t \int_{\mathbb{T}}\Big( \int_{\mathbb{T}}G_{t-r}(x,x',y)f_r(y) \nabla p_{\eps^2}(y-z)\, dy \Big)^2 \, dz dr \Big\|_{L_{p/2}},             \label{eq:ingredient_1}
    \end{equs}
    where
    \begin{equs}
        G_{t-r}(x,x',y)=p_{t-r}(x-y)-p_{t-r}(x'-y).
    \end{equs}
By smuggling in $f_r(z)$ we can write
    \begin{equs}
        \Big\| \int_s^t \int_{\mathbb{T}}\Big( \int_{\mathbb{T}}G_{t-r}(x,x',y) f_r(y) \nabla p_{\eps^2}(y-z)\, dy \Big)^2 \, dz dr \Big\|_{L_{p/2}}\lesssim A_1+A_2,       \label{eq:ingredient_2}
    \end{equs}
    where
    \begin{equs}
        A_1&:=    \Big\| \int_s^t \int_{\mathbb{T}}\Big( \int_{\mathbb{T}}G_{t-r}(x,x',y) f_r(z) \nabla p_{\eps^2}(y-z)\, dy \Big)^2 \, dz dr \Big\|_{L_{p/2}},
        \\
        A_2&:=    \Big\| \int_s^t \int_{\mathbb{T}}\Big( \int_{\mathbb{T}}G_{t-r}(x,x',y) \big(f_r(y)-f_r(z)\big) \nabla p_{\eps^2}(y-z)\, dy \Big)^2 \, dz dr \Big\|_{L_{p/2}}.
    \end{equs}
    We start with an estimate for $A_1$. By integration by parts and the semigroup property we have the identity
    \begin{equs}
 \Big(\int_{\mathbb{T}}G_{t-r}(x,x',y) f_r(z) \nabla p_{\eps^2}(y-z)\, dy\Big)^2
       = |\nabla_z G_{\eps^2+t-r}(x,x',z)|^2  |f_r(z)|^2.
    \end{equs}
    Using the above and Minkowski's inequality we get 
    \begin{equs}
         A_1 &\lesssim  \int_s^t \int_{\mathbb{T}}|\nabla_zG_{\eps^2+t-r}(x,x',z)|^2  \|f_r(z)\|_{L_p}^2  \,dz dr
         \\
         & \lesssim  \int_s^t \|\nabla_z G_{\eps^2+t-r}(x,x',\cdot)\|_{L_2(\mathbb{T})}^2  \|f\|_{\mathcal{C}^{0,0}_p([s,r])}^2  \, dr
         \\
           & \stackrel{\eqref{eq:HK_alpha}}{\lesssim}  |x-x'|^{2\alpha} \int_s^t (\eps^2+t-r)^{-\alpha-3/2}  \|f\|_{\mathcal{C}^{0,0}_p([s,r])}^2  \,  dr.
    \end{equs}
As for $A_2$, we use the fact that 
    $$
    \nabla p_{\eps^2}(y-z)=\eps^{-2}(y-z)p^{1/2}_{\eps^2}(y-z)p^{1/2}_{\eps^2}(y-z).
    $$ 
Therefore, by the Cauchy-Schwarz inequality we get   
    \begin{equs}
       \Big( \int_{\mathbb{T}}&G_{t-r}(x,x',y) \big(f_r(y)-f_r(z) \big) \nabla p_{\eps^2}(y-z)\, dy \Big)^2
       \\
       & \lesssim \eps^{-4} \int_{\mathbb{T}}|G_{t-r}(x,x',y)|^2 p_{\eps^2}(y-z)\, dy 
          \int_{\mathbb{T}}|f_r(y)-f_r(z)|^2 |y-z|^2p_{\eps^2}(y-z)\, dy
          \\
          & =\eps^{-4} P_{\eps^2}|G_{t-r}(x,x',\cdot)|^2(z) \int_{\mathbb{T}}|f_r(y)-f_r(z)|^2 |y-z|^2p_{\eps^2}(y-z)\, dy.
    \end{equs}
  By Minkowski's inequality we get
    \begin{equs}
        A_2 & \lesssim  \eps^{-4}\int_s^t  \int_{\mathbb{T}} P_{\eps^2}|G_{t-r}(x,x',\cdot)|^2(z)  \, \int_{\mathbb{T}}\|f_r(y)-f_r(z)\|_{L_p}^2 |y-z|^2p_{\eps^2}(y-z)\, dy  dz dr.     
        \\
        & \lesssim  \eps^{-4}\int_s^t  \int_{\mathbb{T}} P_{\eps^2}|G_{t-r}(x,x',\cdot)|^2(z) \,  [f]_{\mathcal{C}^{0,1/4}_p([s,r])}^2\, \int_{\mathbb{T}}|y-z|^{5/2} p_{\eps^2}(y-z)\, dy  dz dr
        \\
         & \stackrel{\eqref{eq:HK-power}}{\lesssim}  \eps^{-3/2}\int_s^t   \int_{\mathbb{T}} P_{\eps^2}|G_{t-r}(x,x',\cdot)|^2(z)  \, dz \,  [f]_{\mathcal{C}^{0,1/4}_p([s,r])}^2  \, dr
         \\
          & \lesssim  \eps^{-3/2}\int_s^t  \|G_{t-r}(x,x',\cdot)\|_{L_2(\mathbb{T})}^2   [f]_{\mathcal{C}^{0,1/4}_p([s,r])}^2  \, dr
          \\
            & \stackrel{\eqref{eq:HK_alpha}}{\lesssim}  \eps^{-3/2}|x-x'|^{2\alpha}\int_s^t   (t-r)^{-\alpha-{1/2}}[f]_{\mathcal{C}^{0,1/4}_p([s,r])}^2  \, dr.
    \end{equs}  
    Hence, by \eqref{eq:ingredient_1}, \eqref{eq:ingredient_2}, and the estimates on $A_1, A_2$, we arrive at 
    \begin{equs}
      \|J_{s,t}^\eps[f](x)-J_{s,t}^\eps[f](x')\|_{L_p}^2 & \lesssim \eps^{3/2}  |x-x'|^{2\alpha} \int_s^t (\eps^2+t-r)^{-\alpha-3/2}  \|f\|_{\mathcal{C}^{0,0}_p([s,r])}^2  \,  dr
    \\
      &  \qquad + |x-x'|^{2\alpha}\int_s^t   (t-r)^{-\alpha-{1/2}}[f]_{\mathcal{C}^{0,1/4}_p([s,r])}^2  \, dr,
    \end{equs}
    which in turn gives
    \begin{equs}
     \,    [J_{s,t}^\eps[f]]_{\mathcal{C}^\alpha_p}^2 & \lesssim \eps^{3/2} \int_s^t (\eps^2+t-r)^{-\alpha-3/2}  \|f\|_{\mathcal{C}^{0,0}_p([s,r])}^2  \,  dr
      \\
    & \qquad \qquad + \int_s^t   (t-r)^{-\alpha-{1/2}}[f]_{\mathcal{C}^{0,1/4}_p([s,r])}^2  \, dr.  \label{eq:alpha_geq_0}
    \end{equs}
    One can repeat the proof by replacing $G_{t-r}(x,x',y)$ with $p_{t-r}(x,y)$ and repeating the above estimates with $\alpha=0$ to get 
  \begin{equ}    \|J_{s,t}^\eps[f]\|_{\mathcal{C}^0_p}^2  \lesssim   \eps^{3/2} \int_s^t (\eps^2+t-r)^{-3/2}  \|f\|_{\mathcal{C}^{0,0}_p([s,r])}^2  \,  dr
 + \int_s^t   (t-r)^{-{1/2}}[f]_{\mathcal{C}^{0,1/4}_p([s,r])}^2  \, dr,      \label{eq:alpha_equal_0}
    \end{equ}
    which is exactly \eqref{eq:main_estimate_J} for $\alpha=0$. 
    Finally, summing 
    \eqref{eq:alpha_geq_0} and \eqref{eq:alpha_equal_0} gives \eqref{eq:main_estimate_J} for $\alpha \in (0,1/4]$.
    This finishes the proof. 
\end{proof}

\begin{corollary}
   Let $p \in [2, \infty)$.  There exists a constant $N=N(p)$ such that for all $\eps>0$, all $(s,t)\in [0,1]^2_{\leq}$, and all $f\in\cC_p^{0,1/4}([s,t])$, the following bounds hold:
   \begin{equs}         \|J_{s,t}^\eps[f]\|_{\mathcal{C}^0_p}^2&  \leq N   \int_s^t (t-r)^{-3/4}  \|f\|_{\mathcal{C}^{0,0}_p([s,r])}^2  \,  dr
    +N  \int_s^t   (t-r)^{-{1/2}}[f]_{\mathcal{C}^{0,1/4}_p([s,r])}^2 \, dr,        \label{eq:J-C0}
\\
      \,   \|J_{s,t}^\eps[f]\|_{\mathcal{C}^{1/4}_p}^2 &  \leq  N\|f\|_{\mathcal{C}^{0,0}_p([s,t])}^2 + N \int_s^t (t-r)^{-3/4} [f]_{\mathcal{C}^{0, 1/4}_p([s,r])}^2\, dr. \label{eq:J-C1/4}
   \end{equs}
 
\end{corollary}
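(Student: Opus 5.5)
The plan is to specialise Lemma~\ref{lem:J_in_Lp} to the two endpoint values $\alpha=0$ and $\alpha=1/4$. In each case I will then absorb the $\eps$-dependent singular weight $\eps^{3/2}(\eps^2+t-r)^{-\alpha-3/2}$ into a bound that does not depend on $\eps$. No new stochastic estimates are needed: everything is elementary manipulation of the time integrals on the right-hand side of \eqref{eq:main_estimate_J}.

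For \eqref{eq:J-C0}, take $\alpha=0$ in \eqref{eq:main_estimate_J}. The second term is already exactly $\int_s^t (t-r)^{-1/2}[f]^2_{\mathcal{C}^{0,1/4}_p([s,r])}\,dr$. For the first term, use the pointwise inequality
\begin{equs}
\eps^{3/2}(\eps^2+t-r)^{-3/2}=\Big(\frac{\eps^2}{\eps^2+t-r}\Big)^{3/4}(\eps^2+t-r)^{-3/4}\leq (t-r)^{-3/4},
\end{equs}
valid for all $r<t$. Inserting this into the integral against $\|f\|^2_{\mathcal{C}^{0,0}_p([s,r])}$ gives \eqref{eq:J-C0}, with a constant independent of $\eps$.

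For \eqref{eq:J-C1/4}, take $\alpha=1/4$. The second term of \eqref{eq:main_estimate_J} then has weight $(t-r)^{-3/4}$, which is what is claimed. For the first term, bound $\|f\|_{\mathcal{C}^{0,0}_p([s,r])}\leq\|f\|_{\mathcal{C}^{0,0}_p([s,t])}$, pull it out of the integral, and compute
\begin{equs}
\eps^{3/2}\int_s^t(\eps^2+t-r)^{-7/4}\,dr\leq \eps^{3/2}\int_0^\infty(\eps^2+u)^{-7/4}\,du=\tfrac43\,\eps^{3/2}\eps^{-3/2}=\tfrac43 .
\end{equs}
This yields the uniform bound $N\|f\|^2_{\mathcal{C}^{0,0}_p([s,t])}$.

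There is no real obstacle. The only point to watch is the $\alpha=1/4$ case: the weight $(\eps^2+t-r)^{-7/4}$ is not integrable uniformly in $\eps$ near $r=t$. One therefore must not bound it by a power of $(t-r)$ as in the $\alpha=0$ case, but integrate it exactly. The prefactor $\eps^{3/2}$ then compensates precisely, at the price of taking the supremum of $\|f\|_{\mathcal{C}^{0,0}_p}$ over the full interval $[s,t]$. This is why \eqref{eq:J-C1/4} has no time integral in its first term.
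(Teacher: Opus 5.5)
Your proposal is correct and follows the paper's proof: you apply Lemma~\ref{lem:J_in_Lp} with $\alpha=0$, using $\eps^{3/2}(\eps^2+t-r)^{-3/2}\leq (t-r)^{-3/4}$, and with $\alpha=1/4$, pulling out $\|f\|_{\mathcal{C}^{0,0}_p([s,t])}$ and using $\int_s^t(\eps^2+t-r)^{-7/4}\,dr\lesssim\eps^{-3/2}$. Your explicit computation of the constant $4/3$ and your remark on why the $\alpha=1/4$ weight must be integrated rather than bounded pointwise are accurate.
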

\begin{proof}
    The  bound \eqref{eq:J-C0} follows by applying Lemma \ref{lem:J_in_Lp} with $\alpha=0$ and using the trivial inequality $(\eps^2+t-r)^{-3/2} \leq \eps^{-3/2}(t-r)^{-3/4}$.   
    The bound \eqref{eq:J-C1/4} follows by applying Lemma \ref{lem:J_in_Lp} with $\alpha=1/4$ and noticing that 
    \begin{equs}
         \int_s^t (\eps^2+t-r)^{-7/4}  \|f\|_{\mathcal{C}^{0,0}_p([s,r])}^2  \,  dr\lesssim  \int_s^t (\eps^2+t-r)^{-7/4}\, dr \, \|f\|_{\mathcal{C}^{0,0}_p([s,t])}^2  \lesssim  \eps^{-3/2} \|f\|_{\mathcal{C}^{0,0}_p([s,t])}^2.
    \end{equs}
\end{proof}
We can now prove the main a priori bound.
    \begin{proof}[Proof of Theorem \ref{lem:niform_bound_1/4}]
     Notice that $\|u^\eps\|_{\mathcal{C}^{0, 1/4}_p([0,1])}< \infty$, since the noise $\xi_\eps$ is smooth in space.
    Using the form $u^\eps_t=P_t\psi + J^\eps_{0, t}[g(u^\eps)]$ and the bound \eqref{eq:J-C1/4}, we get that  for all $t \in [0,1]$ 
    \begin{equs}
      \|u^\eps_t\|_{\mathcal{C}^{1/4}_p}^2  \lesssim  \|\psi\|_{C^{1/4}}^2+   \|g(u^\eps)\|_{\mathcal{C}^{0,0}_p([0,t])}^2 + \int_0^t (t-r)^{-3/4} [g(u^\eps)]_{\mathcal{C}^{0, 1/4}_p([0,r])}^2\, dr.
    \end{equs}
    Further, using the fact that $g$ is Lipschitz continuous, we get 
    \begin{equs}
        \|u^\eps_t\|_{\mathcal{C}^{1/4}_p}^2 \lesssim  1+ \|\psi\|_{C^{1/4}}^2+   \|u^\eps\|_{\mathcal{C}^{0,0}_p([0,t])}^2 + \int_0^t (t-r)^{-3/4} \|u^\eps\|_{\mathcal{C}^{0, 1/4}_p([0,r])}^2\, dr.
    \end{equs}
  We can apply an appropriate version of the Henry-Gronwall inequality, see Lemma \ref{lem:Gronwal}, to obtain that  that for any $t \in [0,1]$
    \begin{equs}   \label{eq:1/4bound_by_0_in_Lp}
          \|u^\eps\|_{\mathcal{C}^{0,1/4}_p([0,t])}^2 \lesssim 1+ \|\psi\|_{C^{1/4}}^2+ \|u^\eps\|_{\mathcal{C}^{0,0}_p([0,t])}^2. 
    \end{equs}
    Next, by \eqref{eq:J-C0} and the Lipschitz continuity of $g$ again, we have
    \begin{equs}
       \|u^\eps_t\|_{\mathcal{C}^{0}_p}^2 &  \lesssim \|\psi\|_{C^{1/4}}^2 +  \int_0^t (t-r)^{-3/4} \|g(u^\eps)\|_{\mathcal{C}^{0,0}_p([0,r])}^2\,  dr
   + \int_0^t (t-r)^{-1/2} [g(u^\eps)]_{\mathcal{C}^{0, 1/4}_p([0,r])}^2\, dr
         \\
          &  \lesssim \|\psi\|_{C^{1/4}}^2 + 1+ \int_0^t (t-r)^{-3/4} \|u^\eps\|_{\mathcal{C}^{0,0}_p([0,r])}^2 \,  dr
       + \int_0^t (t-r)^{-1/2} \|u^\eps\|_{\mathcal{C}^{0, 1/4}_p([0,r])}^2\, dr.
    \end{equs}
    By using \eqref{eq:1/4bound_by_0_in_Lp} to bound the last integral, we get 
    \begin{equs}
         \|u^\eps_t\|_{\mathcal{C}^{0}_p}^2 \lesssim 1+ \|\psi\|_{C^{1/4}}^2+\int_0^t (t-r)^{-3/4} \|u^\eps\|_{\mathcal{C}^{0,0}_p([0,r])}^2 \,  dr.
    \end{equs}
    Applying Lemma \ref{lem:Gronwal} once again gives that 
    \begin{equs}
        \|u^\eps\|_{\mathcal{C}^{0, 0}_p([0,1])}^2\lesssim 1+ \|\psi\|_{C^{1/4}}^2.
    \end{equs}
Finally, plugging this bound back in \eqref{eq:1/4bound_by_0_in_Lp} finishes the proof. 
\end{proof}
\begin{corollary}           \label{lem:regularity_increments}
       Let $p \in [2, \infty)$, $\alpha \in [0, 1/4]$.  There exists a constant $N=N(p, \alpha,C_g)$ such that for all $\eps>0$ and all $(s,t)\in [0,1]^2_{\leq}$ the following bound holds:
       \begin{equs}
           \|u^\eps_t-P_{t-s}u^\eps_s\|_{\mathcal{C}^\alpha_p} \leq N(1+\|\psi\|_{C^{1/4}})|t-s|^{1/8- \alpha/2}.
       \end{equs}
\end{corollary}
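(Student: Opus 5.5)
By the semigroup property of the mild formulation, $u^\eps_t-P_{t-s}u^\eps_s = J^\eps_{s,t}[g(u^\eps)]$. This holds because $P_{t-s}$ applied to $P_s\psi+J^\eps_{0,s}[g(u^\eps)]$ gives $P_t\psi$ plus the stochastic integral over $[0,s]$ against the kernel $p_{t-r}$. The whole task is therefore to bound $\|J^\eps_{s,t}[f]\|_{\cC^\alpha_p}$ with $f=g(u^\eps)$. For this $f$, Theorem \ref{lem:niform_bound_1/4} and the Lipschitz continuity of $g$ give
\[
\|f\|_{\cC^{0,0}_p([s,t])}+[f]_{\cC^{0,1/4}_p([s,t])}\lesssim 1+\|\psi\|_{C^{1/4}}.
\]
It suffices to treat $p\geq 2$.

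We then apply Lemma \ref{lem:J_in_Lp} with this $\alpha\in[0,1/4]$, bounding the two norms of $f$ by the constant above, so only two time integrals remain.

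The second integral is
\[
\int_s^t (t-r)^{-\alpha-1/2}\,dr\lesssim (t-s)^{1/2-\alpha},
\]
which is at most $(t-s)^{1/4-\alpha}$ since $t-s\le 1$.

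The first integral is
\[
\eps^{3/2}\int_s^t(\eps^2+t-r)^{-\alpha-3/2}\,dr,
\]
and this is the only step needing care, because it must be bounded uniformly in $\eps$ by $(t-s)^{1/4-\alpha}$. I would interpolate: use $(\eps^2+t-r)^{-3/4}\le \eps^{-3/2}$, so that
\[
\eps^{3/2}(\eps^2+t-r)^{-\alpha-3/2}\le (\eps^2+t-r)^{-\alpha-3/4}\le (t-r)^{-\alpha-3/4}.
\]
The exponent $\alpha+3/4\le 1$ makes this integrable except at $\alpha=1/4$, so the endpoint needs separate treatment. For $\alpha<1/4$ the integral is $\lesssim (t-s)^{1/4-\alpha}$ directly.

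For the endpoint $\alpha=1/4$ the claimed exponent is $1/8-1/8=0$, so only a uniform bound is needed. This follows from \eqref{eq:J-C1/4}, or from the computation in the corollary giving $\eps^{3/2}\int(\eps^2+t-r)^{-7/4}\,dr\lesssim 1$.

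Alternatively, interpolating more carefully, one can split the cases $t-s\le\eps^2$ and $t-s>\eps^2$. In the first case the integral is at most $\eps^{3/2}(t-s)\eps^{-2\alpha-3}\le (t-s)^{1/4-\alpha}$, using $\eps^{-3/2-2\alpha}\le (t-s)^{-3/4-\alpha}$. In the second case it is at most $\eps^{3/2}(t-s)^{-\alpha-1/2}\lesssim(\eps^2)^{3/4}(t-s)^{-\alpha-1/2}\le (t-s)^{1/4-\alpha}$, using $\eps^{3/2}\le(t-s)^{3/4}$; this bound is valid provided $\alpha+1/2>0$, i.e.\ always, with constant $\lesssim (\alpha+1/2)^{-1}$.

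Either way,
\[
\|J^\eps_{s,t}[f]\|^2_{\cC^\alpha_p}\lesssim (1+\|\psi\|_{C^{1/4}})^2(t-s)^{1/4-\alpha},
\]
and taking square roots gives the exponent $1/8-\alpha/2$. The main obstacle is just this uniform-in-$\eps$ handling of the $\eps^{3/2}$ kernel term, including the endpoint $\alpha=1/4$, which the case split handles cleanly.
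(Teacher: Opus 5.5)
Your main argument is correct, but it is organised differently from the paper's proof, which only handles the two endpoints and interpolates:
\begin{itemize}
\item for $\alpha=1/4$ the paper does not use the $J$-representation at all. It bounds $\|u^\eps_t\|_{\cC^{1/4}_p}+\|P_{t-s}u^\eps_s\|_{\cC^{1/4}_p}$ via \eqref{eq:HK_omega_holder} and Theorem \ref{lem:niform_bound_1/4};
\item for $\alpha=0$ it uses $u^\eps_t-P_{t-s}u^\eps_s=J^\eps_{s,t}[g(u^\eps)]$ together with \eqref{eq:J-C0};
\item the range $\alpha\in(0,1/4)$ then follows from $\|\varphi\|_{\cC^\alpha_p}\le\|\varphi\|_{\cC^0_p}^{1-4\alpha}\|\varphi\|_{\cC^{1/4}_p}^{4\alpha}$.
\end{itemize}
You instead apply Lemma \ref{lem:J_in_Lp} directly at each $\alpha$, using $\eps^{3/2}(\eps^2+t-r)^{-\alpha-3/2}\le(t-r)^{-\alpha-3/4}$. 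This is the $\alpha$-dependent version of the trivial inequality behind \eqref{eq:J-C0}. It avoids the interpolation inequality, but the constant is of order $(1/4-\alpha)^{-1/2}$ and degenerates as $\alpha\uparrow 1/4$, which is why you need a separate endpoint argument. The paper's route gives constants uniform in $\alpha$. Both are acceptable, since $N$ may depend on $\alpha$.

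Your ``alternative'' case split, however, contains a false step, despite your closing remark that it handles the estimate cleanly. For $t-s>\eps^2$ it is not true that $\int_s^t(\eps^2+t-r)^{-\alpha-3/2}\,dr\lesssim(t-s)^{-\alpha-1/2}$. The integral is dominated by $r$ near $t$ and equals $(\alpha+1/2)^{-1}\big(\eps^{-1-2\alpha}-(\eps^2+t-s)^{-\alpha-1/2}\big)$. This is of order $\eps^{-1-2\alpha}$, which can be much larger than $(t-s)^{-\alpha-1/2}$; you evaluated the antiderivative at the wrong endpoint.

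The case still goes through with the correct value. Since $\eps^{3/2}\cdot\eps^{-1-2\alpha}=(\eps^2)^{1/4-\alpha}\le(t-s)^{1/4-\alpha}$ for $\alpha\le 1/4$, the repaired split covers all $\alpha\in[0,1/4]$, including the endpoint, with a constant uniform in $\alpha$. As written, though, that line is wrong, and your first argument is the one that actually proves the claim.
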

\begin{proof}
    For $\alpha=1/4$, the bound follows directly by the triangle inequality, \eqref{eq:HK_omega_holder}, and  Theorem \ref{lem:niform_bound_1/4}. 
    For $\alpha=0$, since $u^\eps_t-P_{t-s}u^\eps_s= J_{s,t}^\eps[g(u^\eps)]$, the desired bound follows by \eqref{eq:J-C0}, the Lipschitz continuity of $g$,  and Theorem \ref{lem:niform_bound_1/4}. Finally, for $\alpha \in (0,1/4)$, the bound follows from the previous cases by the elementary interpolation inequality $\|\varphi\|_{\cC^\alpha_p}\leq\|\varphi\|_{\cC^0_p}^{1-4\alpha}\|\varphi\|_{\cC^{1/4}_p}^{4\alpha} $.
\end{proof}

\subsection{Tightness}
Theorem \ref{lem:niform_bound_1/4} already allows us to conclude tightness of the laws of the solutions.
In the limiting equation the noise will be obtained as the weak limit of the iterated integrals
\begin{equs}
  \beta^{\eps,n}_t & = \eps^{3/4} \int_0^t \int_{\mathbb{T}}e_n(x) J^\eps_{0, r}[1](x) \, \nabla \xi_\eps(dx, dr)  =  K_{0,t} [e_n J^\eps_{0, \cdot}[1]]\label{def:beta_epsilon_n}.
\end{equs}
As a first step we show tightness of their laws.
We remark that $\beta^{\eps,n}$ are martingales and
\begin{equ}\label{eq:integral-bet}
    K_{s,t}^\eps[f _\cdot J_{0,\cdot}^\eps[1]]=\sum_{n=1}^M\int_s^t(f_r,e_n)\,d\beta^{\eps,n}_r
\end{equ}
for $M \in \mathbb{N}$ and  any predictable $f:\Omega\times[0,1]\to\mathrm{span}\{e_1,\ldots,e_M\}$,   such that the above integrals exist.
We start by a very simple estimate that, for later use, is stated in more generality than needed for treating $\beta^{\eps,n}$.
\begin{lemma}    \label{lem:basic_estimate_K}
   Let $p \in [2,\infty)$, $\alpha\in[0,1]$. There exists a constant $N(\alpha, p)$ such that for all $(s,t) \in [0,1]^2_{\leq}$, all predictable random fields $f\in L_2([s,t];\cC^\alpha_p)$, and all $\eps>0$,  the following bound holds:
    \begin{equs}
     \| K^\eps_{s,t}[f]\|^2_{L_p} \leq N \eps^{2\alpha-1/2} \int_s^t \| f_r\|_{\mathcal{C}^\alpha_p}^2 \, dr. 
    \end{equs}
\end{lemma}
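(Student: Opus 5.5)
Via the identity \eqref{eq:integration_white_noise}, $K^\eps_{s,t}[f]=\eps^{3/4}\int_s^t\int_\T \nabla P_{\eps^2}f_r(z)\,\xi(dz,dr)$. The BDG inequality and Minkowski's inequality (in $L_{p/2}$, with $p\geq2$) then give
\begin{equs}
\|K^\eps_{s,t}[f]\|_{L_p}^2\lesssim \eps^{3/2}\Big\|\int_s^t\int_\T |\nabla P_{\eps^2}f_r(z)|^2\,dz\,dr\Big\|_{L_{p/2}}
\leq \eps^{3/2}\int_s^t\int_\T \|\nabla P_{\eps^2}f_r(z)\|_{L_p}^2\,dz\,dr.
\end{equs}
Since $\T$ has unit measure, it suffices to bound $\sup_z\|\nabla P_{\eps^2}f_r(z)\|_{L_p}$, which is $\|\nabla P_{\eps^2}f_r\|_{\cC^0_p}$.

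For this we use the heat kernel estimate \eqref{eq:HK_omega_deriv} with $k=1$ and $t=\eps^2$. This gives $\|\nabla P_{\eps^2}f_r\|_{\cC^0_p}\lesssim \eps^{\alpha-1}\|f_r\|_{\cC^\alpha_p}$. Concretely, write $\nabla P_{\eps^2}f_r(z)=\int_\T\nabla p_{\eps^2}(z-y)(f_r(y)-f_r(z))\,dy$, using $\int\nabla p=0$. Minkowski's inequality then bounds its $L_p$ norm by $[f_r]_{\cC^\alpha_p}\int|y|^\alpha|\nabla p_{\eps^2}(y)|\,dy$, and this is $\lesssim\eps^{\alpha-1}[f_r]_{\cC^\alpha_p}$ by \eqref{eq:HK-power}. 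Squaring and combining with the factor $\eps^{3/2}$ gives $\eps^{3/2+2\alpha-2}=\eps^{2\alpha-1/2}$, which is the claim.

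There is no real obstacle. The only point needing care is the order of operations: apply BDG first, then move the $L_{p/2}$ norm inside the $dz\,dr$ integrals by Minkowski. This puts the supremum in $z$ outside the $L_p$ norm, which is exactly what the $\cC^\alpha_p$ norms control. For $\eps\geq1$ the kernel estimate holds up to constants (or is restricted to $t\in(0,1]$), and one can use $\|\nabla P_{\eps^2}f\|\lesssim\|f\|$ together with $\eps^{3/2}\leq\eps^{2\alpha-1/2}\cdot\eps^{2-2\alpha}$. If uniformity for large $\eps$ matters, this case is handled separately.
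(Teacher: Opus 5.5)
Your argument is the paper's proof: the identity \eqref{eq:integration_white_noise}, then BDG, then Minkowski in $L_{p/2}$ to move the $L_p$ norm inside the $dz\,dr$ integral, and finally \eqref{eq:HK_omega_deriv} with $k=1$, $t=\eps^2$, which you additionally justify by subtracting $f_r(z)$ and applying \eqref{eq:HK-power}. One correction to your remark on $\eps\geq1$: the factorisation $\eps^{3/2}=\eps^{2\alpha-1/2}\eps^{2-2\alpha}$ gives no uniform constant when $\alpha<1$; the fix is that $\nabla$ annihilates the zero Fourier mode, so $\|\nabla p_t\|_{L_1(\T)}\lesssim e^{-4\pi^2 t}$ for $t\geq 1$, which absorbs any power of $\eps$ (the paper silently uses \eqref{eq:HK_omega_deriv}, which is stated only for $t\in(0,1]$, and only ever needs small $\eps$).
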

\begin{proof}
    By \eqref{eq:integration_white_noise}, the BDG inequality,  Minkowski's inequality  and \eqref{eq:HK_omega_deriv}, we have 
    \begin{equs}
        \| K^\eps_{s,t}[f]\|_{L_p}^2 &\lesssim  \eps^{3/2}\int_s^t \int_{\mathbb{T}} \|\nabla P_{\eps^2}f_r(x)\|_{L_p}^2\, dx dr 
        \\&\leq     \eps^{3/2}\int_s^t \|\nabla P_{\eps^2}f_r\|_{\mathcal{C}^0_p}^2 \, dr
        \\
        &\lesssim \eps^{2\alpha-1/2} \int_s^t \| f_r\|_{\mathcal{C}^\alpha_p}^2 \, dr.
    \end{equs}
\end{proof}

\begin{lemma}\label{lem:noise-compactness}
Let $p\in[2,\infty)$, $\gamma>3/2$. There exists a constant $N=N(\gamma, p)$ such that for any $\eps>0$ the following bound holds:
    \begin{equs}\label{eq:noise-bound}
      \sum_{n=1}^\infty n^{-\gamma} \sup_{0 \leq s<t \leq 1} \frac{\|   \beta^{\eps, n}_t-  \beta^{\eps, n}_s\|^2_{L_p}}{|t-s|}   \leq  N.
    \end{equs}
\end{lemma}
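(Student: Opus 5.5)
Since $\beta^{\eps,n}_t-\beta^{\eps,n}_s=K^\eps_{s,t}[e_nJ^\eps_{0,\cdot}[1]]$, the plan is to apply Lemma \ref{lem:basic_estimate_K} with $f_r=e_nJ^\eps_{0,r}[1]$ and $\alpha=1/4$. This gives
\begin{equs}
\|\beta^{\eps,n}_t-\beta^{\eps,n}_s\|_{L_p}^2\lesssim \int_s^t\|e_nJ^\eps_{0,r}[1]\|_{\mathcal{C}^{1/4}_p}^2\,dr .
\end{equs}
The prefactor $\eps^{2\alpha-1/2}$ is exactly $1$ at $\alpha=1/4$. This is why $1/4$ is the right exponent: any smaller $\alpha$ would leave a negative power of $\eps$, and the regularity of $J^\eps_{0,r}[1]$ is capped at $1/4$ uniformly in $\eps$.

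Next I would split the product using the elementary bound $\|e_n\varphi\|_{\mathcal{C}^{1/4}_p}\lesssim \|e_n\|_{C^{1/4}}\|\varphi\|_{\mathcal{C}^{1/4}_p}$. Since $\varphi=J^\eps_{0,r}[1]$ is random and $e_n$ is deterministic, this follows directly from the definition of the norms. By \eqref{eq:growth_basis} we have $\|e_n\|_{C^{1/4}}^2\lesssim n^{1/2}$. The constant function $1$ lies in $\mathcal{C}^{0,1/4}_p$ with norm $1$ and seminorm $0$, so \eqref{eq:J-C1/4} gives $\|J^\eps_{0,r}[1]\|_{\mathcal{C}^{1/4}_p}^2\le N$ uniformly in $r\in[0,1]$ and $\eps>0$.

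Combining the two steps yields $\|\beta^{\eps,n}_t-\beta^{\eps,n}_s\|_{L_p}^2\lesssim n^{1/2}|t-s|$ with a constant independent of $n$, $\eps$, $s$ and $t$. Multiplying by $n^{-\gamma}$ and summing gives \eqref{eq:noise-bound} for any $\gamma>3/2$, since then $\sum_n n^{1/2-\gamma}<\infty$.

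The argument needs no delicate estimate; the only point requiring care is the choice $\alpha=1/4$ so that the $\eps$-powers cancel exactly. A separate check is needed that $f$ is predictable and lies in $L_2([s,t];\mathcal{C}^{1/4}_p)$, and this is immediate from the uniform bound above.
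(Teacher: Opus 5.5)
Your proposal is correct and follows the paper's proof essentially verbatim. You apply Lemma \ref{lem:basic_estimate_K} with $\alpha=1/4$ so that the $\eps$-power vanishes, and you split off $\|e_n\|_{C^{1/4}}^2\lesssim n^{1/2}$. You then bound $\|J^\eps_{0,\cdot}[1]\|_{\mathcal{C}^{0,1/4}_p([0,1])}$ uniformly via \eqref{eq:J-C1/4} and sum $\sum_n n^{1/2-\gamma}<\infty$, exactly as the paper does.
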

\begin{proof}
    By the definition of $\beta^{\eps,n}$  and Lemma \ref{lem:basic_estimate_K} applied with $\alpha=1/4$, we get 
    \begin{equs}
        \|   \beta^{\eps, n}_t-  \beta^{\eps, n}_s\|^2_{L_p}  =\|K_{s,t}[e_n J^\eps_{0, \cdot}[1]]\|^2_{L_p}
        & \lesssim \|e_n J^\eps_{0, \cdot}[1]]\|_{\mathcal{C}^{0,1/4}_p([0,1])}^2 |t-s|
        \\
        & \lesssim    \|e_n\|_{C^{1/4}}^2\| J^\eps_{0, \cdot}[1]]\|_{\mathcal{C}^{0,1/4}_p([0,1])}^2 |t-s|
        \\
        & \lesssim n^{1/2}|t-s|,
    \end{equs}
    where for the last inequality we have used \eqref{eq:growth_basis} and the fact  that $\| J^\eps_{0, \cdot}[1]]\|_{\mathcal{C}^{0,1/4}_p([0,1])}\lesssim 1$, which in turn follows from \eqref{eq:J-C1/4}. Multiplying with $n^{-\gamma}$ and summing over $n\in \mathbb{N}$ yields \eqref{eq:noise-bound} and finishes the proof.    
\end{proof}
For a normed space $X$ and $\alpha\in\R$, let $\ell_{2,\alpha}(\mathbb{N};  X)$ denote the space of all  $X$-valued sequences, $\lambda=(\lambda_n)_{n=1}^\infty$ such that 
\begin{equs}
    |\lambda|^2_{\ell_{2,\alpha}(\mathbb{N}; X)}:= \sum_{n=1}^\infty n^{-\alpha}\|\lambda_n\|_{X}^2 < \infty. 
\end{equs}

Let us  set 
\begin{equs}
\mathfrak{C}^+ :&=  C^{1/100}([0,1] \times \mathbb{T}) \times \ell_{2,7/4}(\mathbb{N};  C^{1/100}([0,1]))  \times \ell_{2,7/4}(\mathbb{N};  C^{1/100}([0,1])),
\\
\mathfrak{C} :&=  C([0,1] \times \mathbb{T}) \times \ell_{2,2}(\mathbb{N};  C([0,1]))  \times \ell_{2,2}(\mathbb{N};  C([0,1])).
\end{equs}
By Lemmata  \ref{lem:regularity_increments},  \ref{lem:noise-compactness} with sufficiently large $p$ and Kolmogorov's continuity theorem, the $L_p$ moment of the triple $(u^{\eps}, w, \beta^{\eps})$ as a $\mathfrak{C}^+$-valued random variable is bounded uniformly in $\eps\in(0,1]$.
Moreover, $\mathfrak{C}^+$ is compactly embedded into $\mathfrak{C}$ and the latter is a Polish space. Therefore, Prokhorov's and Skorohod's theorems readily imply the following.
\begin{lemma}\label{lem:noise-conv2}
Let Assumption \ref{as:g} hold.
  Given any sequence $(\eps_k)_{k \in \mathbb{N}}\subset(0,1)$ such that $\eps_k \to 0$, there exists a subsequence   $(\eps_k)_{k \in \mathbb{N}_1}$, $\mathbb{N}_1 \subset \mathbb{N}$,  a complete probability space $(\widehat{\Omega},\widehat{\mathcal{F}},\widehat{\P}) $  and $\mathfrak{C}$-valued random variables $ (\widehat{u}^{\eps_k}, \widehat{w}^{\eps_k}, \widehat{\beta}^{\eps_k})$, $k\in\N_1$, $(\widehat{u}, \widehat{w}, \widehat{\beta})$ defined on $\widehat{\Omega}$ such that 
  $(\widehat{u}^{\eps_k}, \widehat{w}^{\eps_k}, \widehat{\beta}^{\eps_k})\overset{\mathrm{law}}{=}(u^{\eps_k}, w, \beta^{\eps_k})$ for all $k\in\N_1$
  and $\widehat{\mathbb{P}}$-almost surely
  $(\widehat{u}^{\eps_k}, \widehat{w}^{\eps_k}, \widehat{\beta}^{\eps_k})
     \to  (\widehat{u}, \widehat{w}, \widehat{\beta})$ as $\N_1\ni k\to\infty$.
\end{lemma}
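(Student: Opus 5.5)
The argument is the standard Prokhorov--Skorohod route, made concrete by the uniform moment bounds already in hand. First I will make precise the claim, made just before the statement, that the triple $(u^{\eps}, w, \beta^{\eps})$ has a uniformly bounded $L_p$ moment in $\mathfrak{C}^+$.

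For $u^\eps$, combine Theorem \ref{lem:niform_bound_1/4} with Corollary \ref{lem:regularity_increments}. For the time increment write
\[
u^\eps_t(x)-u^\eps_s(x)=\big(u^\eps_t-P_{t-s}u^\eps_s\big)(x)+\big(P_{t-s}u^\eps_s-u^\eps_s\big)(x).
\]
The first term is bounded in $L_p$ by $|t-s|^{1/8}$ via the corollary with $\alpha=0$. The second is bounded by $|t-s|^{1/8}\|u^\eps_s\|_{\cC^{1/4}_p}$ by the usual heat semigroup estimate. Together with the spatial $1/4$-H\"older bound, this gives
\[
\|u^\eps_t(x)-u^\eps_s(y)\|_{L_p}\lesssim |t-s|^{1/8}+|x-y|^{1/4}
\]
uniformly in $\eps$. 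Kolmogorov's continuity theorem with $p$ large then gives uniform bounds on $\E\|u^\eps\|^p_{C^{1/100}([0,1]\times\T)}$.

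For $w^n$ the Brownian bound is trivial. For $\beta^{\eps,n}$, Lemma \ref{lem:noise-compactness} combined with Kolmogorov's theorem gives
\[
\E\|\beta^{\eps,n}\|_{C^{1/100}}^p\lesssim n^{p/4}
\]
up to a $\gamma$-dependent weight. Summing with weights $n^{-7/4}$ yields $\E|\beta^\eps|^{2}_{\ell_{2,7/4}}\lesssim \sum_n n^{-7/4}n^{1/2}<\infty$. The same holds for $w$, and the $p$-th moment follows by Minkowski.

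Next I check that $\mathfrak{C}^+\hookrightarrow\mathfrak{C}$ is compact. On the first factor this is Arzel\`a--Ascoli. On the weighted sequence factors, a bounded set in $\ell_{2,7/4}(\N;C^{1/100})$ has every coordinate in a compact set of $C([0,1])$. Its tails are small in $\ell_{2,2}$, since
\[
\sum_{n>M}n^{-2}\|\lambda_n\|^2\le M^{-1/4}\sum_n n^{-7/4}\|\lambda_n\|^2 .
\]
A diagonal argument plus this uniform tail control gives precompactness. $\mathfrak{C}$ is Polish as a product of separable complete spaces (the weighted $\ell_2$ of a separable Banach space is separable Banach).

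Markov's inequality then shows the laws of $(u^{\eps_k},w,\beta^{\eps_k})$ on $\mathfrak{C}$ are tight, since balls of $\mathfrak{C}^+$ are relatively compact in $\mathfrak{C}$. Prokhorov gives a weakly convergent subsequence indexed by $\N_1$, and Skorohod's representation theorem (valid on Polish spaces) produces $\widehat\Omega$ and the copies with equal laws and almost sure convergence. Completing the probability space is harmless.

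The main obstacle is the compactness of the weighted sequence-space embedding, together with the measurability/Borel issue that $\mathfrak{C}^+$ is not separable. I would handle the latter by working only with laws on $\mathfrak{C}$ and using the closed balls of $\mathfrak{C}^+$, which are compact in $\mathfrak{C}$ and hence Borel there.
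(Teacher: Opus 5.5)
Your proposal is correct and follows the paper's own argument: uniform $L_p$ moments of $(u^\eps,w,\beta^\eps)$ in $\mathfrak{C}^+$ from Theorem \ref{lem:niform_bound_1/4}, Corollary \ref{lem:regularity_increments}, Lemma \ref{lem:noise-compactness} and Kolmogorov's theorem, then the compact embedding $\mathfrak{C}^+\hookrightarrow\mathfrak{C}$ into a Polish space, then Prokhorov and Skorohod. You also fill in details the paper leaves implicit, namely the time-increment splitting for $u^\eps$, the tail estimate giving compactness of the weighted sequence-space embedding, and closedness (hence Borel measurability) of $\mathfrak{C}^+$-balls in $\mathfrak{C}$.
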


Let us introduce the following $\sigma$-algebras
\begin{equs}
    \widetilde{\mathcal{G}}^{\eps_k}_t&= \sigma \big( \widehat{u}^{\eps_k}_s(x), \widehat{w}^{\eps_k, n}_s, \widehat{\beta}^{\eps_k,n}_s ;  \, x \in \mathbb{T}, n \in \mathbb{N}, s \leq t \big), &\qquad &t \in [0,1],
    \\
       \widetilde{\mathcal{G}}_t&= \sigma \big( \widehat{u}_s(x), \widehat{w}^n_s, \widehat{\beta}^n_s ;  \, x \in \mathbb{T}, n \in \mathbb{N}, s \leq t \big), &\qquad &t \in [0,1]. 
\end{equs}
Let $\widehat{\mathcal{G}}^{\eps_k}_t$ and $\widehat{\mathcal{G}}_t$ be the $\widehat{\mathbb{P}}$-completions of $\widetilde{\mathcal{G}}^{\eps_k}_t$ and $\widetilde{\mathcal{G}}_t$, respectively and denote  $\widehat{\mathbb{G}}^{\eps_k}:= (\widehat{\mathcal{G}}^{\eps_k}_t)_{t \in [0,1]}$ and $\widehat{\mathbb{G}}:= (\widehat{\mathcal{G}}_t)_{t \in [0,1]}$. One can easily see that $ (\widehat{w}^{\eps_k, n})_{n=1}^\infty$ and  $ (\widehat{w}^{n})_{n=1}^\infty$ are sequences of independent standard $\widehat{\mathbb{G}}^{\eps_k}$ and $\widehat{\mathbb{G}}$, respectively, -Brownian motions. 
Further, let us introduce the notation\footnote{The reason to not use the notation $\widehat{\xi}_{\eps_k}$ is that in accordance to our previous notation we should have  $\widehat{\xi}_{\eps_k}=P_{{\eps_k^2}}\widehat{\xi}$ where $\widehat{\xi}= \sum_n e_n \D_t \widehat{w}^n$. Clearly,  $\widehat{\xi}_{[\eps_k]}\neq \widehat{\xi}_{\eps_k}$ but of course one has $\widehat{\xi}_{[\eps_k]}\stackrel{\mathrm{law}}{=}\widehat{\xi}_{\eps_k}$.}
\begin{equs}
\widehat{\xi}_{[\eps_k]}       = \sum_{n=1}^\infty (P_{\eps^2_k}e_n) \D_t \widehat{w}^{\eps_k,n}, 
     \end{equs}
     and notice  that $\widehat{\xi}_{[\eps_k]}\stackrel{\mathrm{law}}{=}\xi_{\eps_k}$.
     Moreover, we denote by $\widehat{K}^{\eps_k}_{s,t}[f]$, $\widehat{J}^{\eps_k}_{s,t}[f](x)$
      stochastic integrals as in \eqref{eq:K-J-notation} but with $\xi_\eps$ replaced by $\widehat{\xi}_{[\eps_k]}$.  We keep the notation $L_p$, $\mathcal{C}^\alpha_p$, and $\mathcal{C}^{0,\alpha}_p$ for random variables and random fields defined on $\widehat{\Omega}$.

     \begin{lemma}\label{lem:skorohod-corrolary}
         With the above notation one has the following:
         \begin{enumerate}[(i)]
         \item For any $p \in [1, \infty)$, there exists a constant $N=N(p, \|\psi\|_{C^{1/4}}, C_g)$ such that 
     \begin{equs} \label{eq:limit_C_1/4}
         \| \widehat{u}\|_{\mathcal{C}^{0,1/4}_p([0,1])} & \leq N,
     \end{equs}
     and for all $(s,t) \in [0,1]^2_{\leq}$, it holds that 
     \begin{equs}           \label{eq:limit_time_reg}
        \| \widehat{u}_t-P_{t-s} \widehat{u}_s\|_{\mathcal{C}^0_p} \leq N |t-s|^{1/8};
     \end{equs}
         \item For any $p\in[1,\infty)$, $(\widehat{u}^{\eps_k}, \widehat{w}^{\eps_k}, \widehat{\beta}^{\eps_k})
     \to  (\widehat{u}, \widehat{w}, \widehat{\beta})$ in $L_p(\widehat{\Omega};\mathfrak{C} )$ as $ \mathbb{N}_1 \ni k \to \infty$;
             \item $ \widehat{\beta}^{\eps_k,n}_t =\widehat K_{0,t}^{\eps_k}[e_n\widehat{J}^{\eps_k}_{0,\cdot}[1]]$;
             \item $\widehat{u}^{\eps_k}$ is the unique solution of 
             \begin{equs}\label{eq:hat-SPDE}
    \D_t \widehat{u}^{\eps_k}= \Delta \widehat{u}^{\eps_k}+ \eps_k^{3/4} g(\widehat{u}^{\eps_k}) \nabla  \widehat{\xi}_{[\eps_k]},\qquad \widehat{u}|_{t=0}=\psi.
\end{equs}
         \end{enumerate}
     \end{lemma}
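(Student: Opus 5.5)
The four items all come from equality of laws combined with Skorohod's almost sure convergence. Each property of $(u^{\eps_k},w,\beta^{\eps_k})$ that can be written as a statement about the joint law transfers to the hatted triple. I would treat them in the order (ii), (i), (iii), (iv).

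For (ii), I would use uniform integrability. Lemmata \ref{lem:regularity_increments} and \ref{lem:noise-compactness}, together with Kolmogorov's theorem, bound the $L_q$ moments of the $\mathfrak{C}^+$-norm uniformly in $k$ for every $q$. Since $\mathfrak{C}^+\hookrightarrow\mathfrak{C}$ continuously, the same holds for the $\mathfrak{C}$-norm. Because the hatted triples have the same law, the $\mathfrak{C}$-norms of the hatted triples, raised to the power $p$, are uniformly integrable. Almost sure convergence plus Vitali's theorem then gives convergence in $L_p(\widehat\Omega;\mathfrak{C})$.

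For (i), I would pass to the limit pointwise. For fixed $x,y,t$, Theorem \ref{lem:niform_bound_1/4} and equality in law give
\[
\|\widehat u^{\eps_k}_t(x)-\widehat u^{\eps_k}_t(y)\|_{L_p}\le N|x-y|^{1/4},
\]
together with the analogous sup bound. By (ii), $\widehat u^{\eps_k}_t(x)\to\widehat u_t(x)$ in $L_p$, so the bounds persist, which gives \eqref{eq:limit_C_1/4}. For \eqref{eq:limit_time_reg} I would argue the same way from Corollary \ref{lem:regularity_increments} with $\alpha=0$. The point to check is that $P_{t-s}\widehat u^{\eps_k}_s(x)\to P_{t-s}\widehat u_s(x)$ in $L_p$; this holds because $P_{t-s}$ is an averaging operator and the convergence in $C(\T)$ holds in $L_p$.

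Items (iii) and (iv) are the main obstacle, because stochastic integrals are not continuous functionals of the paths, so equality in law of the triples does not by itself transfer them. For (iii), I would first approximate by Riemann sums. Define $Y^{\eps,n}$ as the discretization of $K_{0,t}[e_nJ^\eps_{0,\cdot}[1]]$ obtained by truncating to finitely many modes $n'\le M$ and freezing the integrand on a partition of mesh $h$. For fixed $\eps$ the noise $\nabla\xi_\eps$ is spatially smooth, so $Y^{\eps,n}$ is an explicit measurable (indeed polynomial) function of finitely many increments of $w^1,\dots,w^M$, and it converges to $\beta^{\eps,n}_t$ in $L_2$ as $M\to\infty$, $h\to0$. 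The same function applied to $\widehat w^{\eps_k}$ converges to $\widehat K^{\eps_k}_{0,t}[e_n\widehat J^{\eps_k}_{0,\cdot}[1]]$, since $\widehat w^{\eps_k}$ consists of independent $\widehat{\mathbb G}^{\eps_k}$-Brownian motions. Now $(Y^{\eps_k,n},\beta^{\eps_k,n}_t)$ has the same law as the corresponding hatted pair, so
\[
\widehat{\E}\bigl|\widehat\beta^{\eps_k,n}_t-\widehat Y^{\eps_k,n}\bigr|^2=\E\bigl|\beta^{\eps_k,n}_t-Y^{\eps_k,n}\bigr|^2\to0,
\]
which yields the identity. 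Item (iv) uses the same device applied to the mild formulation. For each $(t,x)$, the map
\[
(u,w)\mapsto u_t(x)-P_t\psi(x)-\eps^{3/4}\int_0^t\int_{\T} p_{t-r}(x-y)\,g(u_r(y))\,\nabla\xi_\eps(dy,dr)
\]
is an $L_2$-limit of Riemann-sum functionals that depend only on the law of $(u,w)$, since $u$ is continuous and adapted and the integrand is bounded in $L_2$. This functional vanishes under $\P$, so it vanishes under $\widehat\P$. This requires $\widehat u^{\eps_k}$ to be $\widehat{\mathbb G}^{\eps_k}$-predictable, which holds by continuity and adaptedness. Uniqueness then follows from the standard Lipschitz well-posedness for fixed $\eps$.
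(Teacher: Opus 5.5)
Your proposal is correct. For (i)--(iii) it is essentially the paper's argument. In (ii) you use uniform integrability together with almost sure convergence, as the paper does. In (i) you pass the a priori bounds to the limit through the $L_p$ convergence of (ii), where the paper uses Fatou; this makes no difference. In (iii) the paper writes $\beta^{\eps_k,n}_t$ and $\widehat K^{\eps_k}_{0,t}[e_n\widehat J^{\eps_k}_{0,\cdot}[1]]$ as one measurable map $\mathcal{K}^{\eps_k}$ evaluated at $w$ and at $\widehat w^{\eps_k}$ respectively. It does not say why the same map represents the stochastic integral on both filtered spaces. Your finite-mode Riemann sums supply exactly this step, since their convergence uses only that the $\widehat w^{\eps_k}$ are $\widehat{\mathbb{G}}^{\eps_k}$-Brownian motions.

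The genuine difference is in (iv). The paper invokes strong existence to obtain a measurable solution map, writes $u^{\eps_k}=\mathcal{U}^{\eps_k}(w)$, and transfers this through the joint law to $\widehat u^{\eps_k}=\mathcal{U}^{\eps_k}(\widehat w^{\eps_k})$. This takes two lines and shows that $\widehat u^{\eps_k}$ is a functional of $\widehat w^{\eps_k}$ alone. It tacitly relies on a Yamada--Watanabe-type fact: the same map must produce the strong solution on $(\widehat\Omega,\widehat{\mathbb{G}}^{\eps_k})$.

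You instead verify the mild equation for $\widehat u^{\eps_k}$ directly. Your Riemann-sum residuals are measurable functionals of the pair $(u,w)$ and tend to zero in $L_2$ under $\P$. Under $\widehat\P$ they have the same $L_2$ norms and converge to the true residual, which must therefore vanish. Pathwise uniqueness on $(\widehat\Omega,\widehat{\mathbb{G}}^{\eps_k})$ then finishes the argument. This is the standard identification step in martingale-solution arguments. It needs neither a solution map nor strong existence, only adaptedness of $\widehat u^{\eps_k}$ and convergence of the Riemann sums on both spaces. That convergence is harmless here: for fixed $\eps$, the smoothing by $\nabla P_{\eps^2}$ makes the integrand $r\mapsto\nabla P_{\eps^2}[p_{t-r}(x-\cdot)g(u_r)]$ continuous in $L_2(\T)$ up to $r=t$.
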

     \begin{proof}
From Theorem \ref{lem:niform_bound_1/4} and Corollary \ref{lem:regularity_increments} and the equality $u^{\eps_k}\stackrel{\mathrm{law}}{=}\widehat{u}^{\eps_k}$ it follows that \eqref{eq:limit_C_1/4}-\eqref{eq:limit_time_reg} are satisfied by $\widehat{u}^{\eps_k}$. They are then also satisfied by $\widehat{u}$ by Fatou's lemma. This shows (i). The almost sure convergence and the uniform integrability implies (ii). As for (iii), one can express $\widehat K_{0,t}^{\eps_k}[e_n\widehat{J}^{\eps_k}_{0,\cdot}[1]]=\mathcal{K}^{\eps_k}(\widehat{w}^{\eps_k})$ with some measurable function $\mathcal{K}^{\eps_k}$ on $\ell_{2,2}(\N;C([0,1]))$, and moreover the equality $\beta^{\eps_k}=\mathcal{K}^{\eps_k}(w)$ holds by definition. Therefore (iii) follows by $(\widehat{\beta}^{\eps_k},\widehat{w}^{\eps_k})\stackrel{\mathrm{law}}{=}(\beta^{\eps_k},w)$.
Finally, (iv) follows similarly, noting that thanks to strong existence, the solution of \eqref{eq:hat-SPDE} can also be written in the form $\mathcal{U}^{\eps_k} (\widehat{w}^{\eps_k})$
 with some measurable function $\mathcal{U}^{\eps_k}$ on $\ell_{2,2}(\N;C([0,1]))$ and $u^{\eps_k}=\mathcal{U}^{\eps_k}(w)$ holds also by definition.
     \end{proof}
For $\phi \in C^\infty(\mathbb{T})$,  let us set 
\begin{equs}
    \widehat{M}^{\eps_k}_t(\phi)&:= (\widehat{u}^{\eps_k}_t, \phi)-(\psi, \phi)-\int_0^t (\widehat{u}^{\eps_k}_r, \Delta \phi) \, dr,
    \\
\widehat{M}_t(\phi)&:= (\widehat{u}_t, \phi)-(\psi, \phi)-\int_0^t (\widehat{u}_r, \Delta \phi) \, dr.                      \label{eq:def_hat_M}
\end{equs}
\begin{lemma}       \label{lem:M_in_Lp}
    The following hold.
    \begin{enumerate}[(i)]
        \item For each $k \in \mathbb{N}_1$ and  $\phi \in C^\infty(\mathbb{T})$,  $(\widehat{M}^{\eps_k}_t(\phi))_{t \in [0,1]}$ is a  $\widehat{\mathbb{G}}^{\eps_k}$-martingale and $\| \widehat{M}^{\eps_k}_1(\phi)\|_{L_p} < \infty$ for any $p \in [1, \infty)$. 
        \item For each $\phi \in C^\infty(\mathbb{T})$,  $(\widehat{M}_t(\phi))_{t \in [0,1]}$ is a  $\widehat{\mathbb{G}}$-martingale and $\|\widehat{M}_1(\phi)\|_{L_p} < \infty$   for any $p \in [1, \infty)$. 
    \end{enumerate}
\end{lemma}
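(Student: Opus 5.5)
For part (i), I would start from the mild formulation in Lemma \ref{lem:skorohod-corrolary}(iv). Since $\widehat{u}^{\eps_k}$ solves \eqref{eq:hat-SPDE} with a noise that is smooth in space, the standard equivalence of mild and weak solutions gives, for $\phi\in C^\infty(\T)$,
\begin{equs}
\widehat{M}^{\eps_k}_t(\phi)=\eps_k^{3/4}\int_0^t\int_\T \phi(x) g(\widehat{u}^{\eps_k}_r(x))\,\nabla\widehat{\xi}_{[\eps_k]}(dx,dr)=\widehat K^{\eps_k}_{0,t}[\phi\, g(\widehat{u}^{\eps_k})].
\end{equs}
This identity can be checked by testing the mild form against $\phi$, using the stochastic Fubini theorem, and noting that $\int_0^t (P_r\cdot,\Delta\phi)\,dr$ telescopes to $(P_t\cdot,\phi)-(\cdot,\phi)$.

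The right-hand side is a stochastic integral against the Brownian motions $\widehat{w}^{\eps_k,n}$, since $\nabla\widehat{\xi}_{[\eps_k]}=\sum_n \nabla P_{\eps_k^2}e_n\,\D_t\widehat{w}^{\eps_k,n}$. Its integrand is predictable with respect to $\widehat{\mathbb{G}}^{\eps_k}$, because $\widehat{u}^{\eps_k}$ is continuous and adapted to this filtration by construction. These $\widehat{w}^{\eps_k,n}$ are $\widehat{\mathbb{G}}^{\eps_k}$-Brownian motions, so the integral is a local martingale. Lemma \ref{lem:basic_estimate_K} with $\alpha=0$, together with Theorem \ref{lem:niform_bound_1/4} (via equality in law) and the linear growth of $g$, gives
\begin{equs}
\|\widehat{M}^{\eps_k}_t(\phi)\|_{L_p}^2\lesssim \eps_k^{-1/2}\|\phi\|_{C^0}^2\big(1+\|\widehat u^{\eps_k}\|^2_{\mathcal C^{0,0}_p([0,1])}\big)<\infty.
\end{equs}
The BDG inequality then upgrades this to a true martingale with finite $L_p$ moments. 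The bound blows up as $\eps_k\to0$, but for fixed $k$ that does not matter.

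For part (ii), a uniform-in-$k$ bound is needed to pass martingale properties to the limit. I would take $\alpha=1/4$ in Lemma \ref{lem:basic_estimate_K}, giving $\eps_k^{0}$, and bound $\|\phi g(\widehat u^{\eps_k}_r)\|_{\mathcal C^{1/4}_p}\lesssim \|\phi\|_{C^{1}}(1+\|\widehat u^{\eps_k}\|_{\mathcal C^{0,1/4}_p})$, which is uniformly bounded by Theorem \ref{lem:niform_bound_1/4}. Hence $\sup_k\|\widehat M^{\eps_k}_t(\phi)\|_{L_p}\le N$ for every $p$. By Lemma \ref{lem:skorohod-corrolary}(ii), $\widehat M^{\eps_k}_t(\phi)\to\widehat M_t(\phi)$ in $L_p$, which is immediate from the definition \eqref{eq:def_hat_M} and uniform convergence of $\widehat u^{\eps_k}$. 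Therefore $\|\widehat M_t(\phi)\|_{L_p}\le N$.

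The remaining point is the martingale property. For $s\le t$, a bounded continuous function $h$ on the path space, and finitely many evaluations $\widehat u_{s_i}(x_i),\widehat w^n_{s_i},\widehat\beta^n_{s_i}$ with $s_i\le s$, part (i) gives
\begin{equs}
\widehat{\mathbb E}\big[(\widehat M^{\eps_k}_t(\phi)-\widehat M^{\eps_k}_s(\phi))\,h(\widehat u^{\eps_k}_{s_i}(x_i),\widehat w^{\eps_k,n}_{s_i},\widehat\beta^{\eps_k,n}_{s_i})\big]=0.
\end{equs}
I pass to the limit using the almost sure convergence and uniform integrability just established. A monotone class argument extends this to all bounded $\widetilde{\mathcal G}_s$-measurable functions, and completion does not affect conditional expectations. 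I expect the main obstacle to be the uniform $L_p$ bound, which must avoid the $\eps_k^{-1/2}$ loss; as noted, this is resolved by using the $1/4$-Hölder a priori estimate with $\alpha=1/4$.
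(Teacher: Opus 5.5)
Your proposal is correct. The martingale-property argument for (ii) (test functionals of finitely many evaluations before time $s$, passage to the limit by a.s.\ convergence plus uniform integrability, then a monotone class argument) is exactly the paper's.

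Where you differ is in the moment bounds. You route them through the representation $\widehat M^{\eps_k}_t(\phi)=\widehat K^{\eps_k}_{0,t}[\phi\,g(\widehat u^{\eps_k})]$ and Lemma~\ref{lem:basic_estimate_K}. First you use $\alpha=0$, which loses $\eps_k^{-1/2}$, and then $\alpha=1/4$ to get uniformity in $k$. The paper instead reads the bound directly off the definition \eqref{eq:def_hat_M}: $\|\widehat M^{\eps_k}_t(\phi)\|_{L_p}\le 3\|\widehat u^{\eps_k}\|_{\mathcal{C}^{0,0}_p([0,1])}\sup_x(|\phi(x)|+|\Delta\phi(x)|)$. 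This is uniform in $k$ by Theorem~\ref{lem:niform_bound_1/4}, and Fatou's lemma then gives $\widehat M_t(\phi)\in L_p$.

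So the ``main obstacle'' you identify is not really there. $\widehat M^{\eps_k}_t(\phi)$ is an $\eps$-independent linear functional of $\widehat u^{\eps_k}$. As you note yourself, the $L_p$ convergence from Lemma~\ref{lem:skorohod-corrolary}(ii) already gives both $\widehat M_t(\phi)\in L_p$ and the needed uniform integrability, so the $\alpha=1/4$ estimate is redundant.

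What your route does buy is an explicit justification of the martingale property in (i), which the paper dismisses with ``clearly''. You identify $\widehat M^{\eps_k}(\phi)$ with a stochastic integral against the $\widehat{\mathbb{G}}^{\eps_k}$-Brownian motions $\widehat w^{\eps_k,n}$, with predictable integrand and finite moments. For that purpose the crude $\alpha=0$ bound, or mere square-integrability of the integrand, is all that is needed.
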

\begin{proof}
    First note that for any $p \in [1, \infty)$, by the definition of $\widehat{M}^{\eps_k}_t(\phi)$, Minkowski's inequality, and the fact that $\widehat{u}^{\eps_k}_0=\psi$,  we have for any $t \in [0,1]$ 
    \begin{equs} \label{eq:M_eps_k_Lp}
        \|\widehat{M}^{\eps_k}_t(\phi)\|_{L_p} \leq  3\|\widehat{u}^{\eps_k}\|_{\mathcal{C}^{0,0}_p([0,1])} \sup_{x\in \mathbb{T}}(| \phi(x)| +| \Delta \phi(x)|)\leq  N_0 < \infty,
    \end{equs}
    where we have used Theorem \ref{lem:niform_bound_1/4} and $N_0$ depends only on $p, g, \psi $ and $\phi$. 
   Moreover, since $\widehat{u}^{\eps_k}$ satisfies \eqref{eq:hat-SPDE}, we clearly have that $\widehat{M}^{\eps_k}(\phi)$ is a  $\widehat{\mathbb{G}}^{\eps_k}$-martingale. Consequently, we have $(i)$. For $(ii)$, since $\widehat{u}^{\eps_k}\to \widehat{u}$ uniformly on $[0,1]\times \mathbb{T}$ almost surely, we have that
   \begin{equs}   \label{eq:lim_Meps_M}
       \widehat{\mathbb{P}} \Big( \lim_{ \mathbb{N}_1 \ni k \to \infty}\widehat{M}^{\eps_k}_t(\phi)= \widehat{M}_t(\phi) \Big)=1
   \end{equs}
   for each $t \in [0,1]$. Hence, letting $\mathbb{N}_1 \ni k \to \infty$ in \eqref{eq:M_eps_k_Lp} and using Fatou's lemma shows that $ \|\widehat{M}_t(\phi)\|_{L_p} < \infty$ for any $t \in [0,1]$. Further,  $\widehat{M}(\phi)$ is adapted to $\widehat{\mathbb{G}}$ by definition of the latter. Finally, we show the martingale property. For this, fix $(s,t)\in[0,1]_\leq^2$ and
take $\ell\in\N$, $r_1,\ldots,r_\ell\in[0,s]$, $x_1,\ldots,x_\ell\in\T$, $n_1,\ldots,n_\ell\in\N$, and a bounded continuous function $g:\R^{3\ell}\to[0,\infty)$. 
Define 
\begin{equs}
    \widehat{G}&=g(\widehat u_{r_1}(x_1),\ldots,\widehat u_{r_\ell}(x_\ell),\widehat{w}^{n_1}_{r_1},\ldots,\widehat{w}^{n_1}_{r_1},\widehat{\beta}^{n_1}_{r_1},\ldots,\widehat{\beta}^{n_1}_{r_1})       \label{eq:def_G_test}
    \\
    \widehat{G}^{\eps_k}&= g(\widehat u_{r_1}^{\eps_k}(x_1),\ldots,\widehat u_{r_\ell}^{\eps_k}(x_\ell),\widehat{w}^{\eps_k, n_1}_{r_1},\ldots,\widehat{w}^{\eps_k, n_1}_{r_1},\widehat{\beta}^{\eps_k, n_1}_{r_1},\ldots,\widehat{\beta}^{\eps_k, n_1}_{r_1})         \label{eq:def_G_eps_k_test}
\end{equs}
Since $\widehat{M}^{\eps_k}(\phi)$ is $\widehat{\mathbb{G}}^{\eps_k}$-martingale, we have 
\begin{equs}
    \widehat{\mathbb{E}}\Big(  \widehat{G}^{\eps_k} \big(\widehat{M}^{\eps_k}_t(\phi)-\widehat{M}^{\eps_k}_s(\phi)\big)\Big)=0.
\end{equs}
By letting  $\mathbb{N}_1 \ni k \to \infty$ and by uniform integrability,  we arrive at 
\begin{equs}
    \widehat{\mathbb{E}}\Big(  \widehat{G} \big(\widehat{M}_t(\phi)-\widehat{M}_s(\phi)\big)\Big)=0. 
\end{equs}
Further,  by standard measure theoretic arguments the above can be extended to any bounded $\widehat{\mathcal{G}}_s$-measurable random variable $\widehat{G}$. 
This implies that $\widehat{\mathbb{E}} \big( \widehat{M}_t(\phi) \big| \widehat{\mathcal{G}}_s \big) =\widehat{M}_s(\phi)$, which brings the proof to an end.  
\end{proof}

\section{Identification of the limit}\label{sec:limiting}
Throughout this section we assume the setting and notation of Lemma \ref{lem:noise-conv2}.
\subsection{The limiting noise}
\begin{lemma}
    The family $(8\sqrt[\leftroot{2}\uproot{2}4]{\pi}\widehat{\beta}^n)_{n=1}^\infty$ is a sequence of independent $\widehat{\mathbb{G}}$-Brownian motions.
\end{lemma}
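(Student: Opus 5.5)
We will use Lévy's characterisation. By Lemma \ref{lem:skorohod-corrolary}(ii) the processes $\widehat\beta^n$ are continuous and adapted to $\widehat{\mathbb{G}}$, with $\widehat\beta^n_0=0$. It therefore suffices to prove two things for every $n,m$:
\begin{enumerate}[(a)]
\item $\widehat\beta^n$ is a $\widehat{\mathbb{G}}$-martingale;
\item $\widehat\beta^n_t\widehat\beta^m_t-\delta_{nm}\,c^2 t$ is a $\widehat{\mathbb{G}}$-martingale, with $c^2=(64\sqrt{\pi})^{-1}$.
\end{enumerate}
Multiplying by $8\sqrt[4]{\pi}$ then gives independent standard Brownian motions.

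Property (a) goes exactly as in Lemma \ref{lem:M_in_Lp}. For each $k$, $\widehat\beta^{\eps_k,n}$ is a $\widehat{\mathbb{G}}^{\eps_k}$-martingale by Lemma \ref{lem:skorohod-corrolary}(iii). Lemma \ref{lem:noise-compactness} gives uniform $L_p$ bounds. We test against the functionals $\widehat G^{\eps_k}$ from \eqref{eq:def_G_eps_k_test}, pass to the limit using uniform integrability, and extend to all bounded $\widehat{\mathcal G}_s$-measurable variables. The same argument applied to $\widehat\beta^{\eps_k,n}\widehat\beta^{\eps_k,m}-\langle \beta^{\eps_k,n},\beta^{\eps_k,m}\rangle$ reduces (b) to the claim
\begin{equ}
\lim_{k\to\infty}\widehat{\E}\Big|\langle \widehat\beta^{\eps_k,n},\widehat\beta^{\eps_k,m}\rangle_t-\langle \widehat\beta^{\eps_k,n},\widehat\beta^{\eps_k,m}\rangle_s-\delta_{nm}c^2(t-s)\Big|=0 .
\end{equ}
The limit of the bracket is taken in $L_1$, and uniform integrability follows from Lemma \ref{lem:noise-compactness} together with BDG. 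Since brackets are law-determined, this claim can be checked on the original space.

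The main obstacle is computing this bracket. By \eqref{eq:integration_white_noise},
\begin{equ}
\langle\beta^{\eps,n},\beta^{\eps,m}\rangle_t=\eps^{3/2}\int_0^t\int_\T \nabla P_{\eps^2}\big(e_nX_r\big)(z)\,\nabla P_{\eps^2}\big(e_mX_r\big)(z)\,dz\,dr,\qquad X_r:=J^\eps_{0,r}[1].
\end{equ}
First we expand $\nabla P_{\eps^2}(e_nX_r)$. The terms in which the derivative falls on $e_n$ have prefactor $\eps^{3/4}$ and $X$ is bounded in $L_p$, so these contribute $O(\eps^{3/2})$. The remaining term is $\eps^{3/2}\int_0^t\int_\T e_ne_m\,(\nabla P_{\eps^2}X_r)^2\,dz\,dr$, up to a commutator error. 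That error is controlled by the H\"older regularity $\mathcal C^{1/4}$ of $X$ and the localisation of $p_{\eps^2}$, in the same way as the term $A_2$ in Lemma \ref{lem:J_in_Lp}.

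Second, we split $(\nabla P_{\eps^2}X_r)^2$ into its expectation and a second-chaos fluctuation. The field $X$ is stationary in space, and
\begin{equ}
\E(\nabla P_{\eps^2}X_r)^2=\eps^{3/2}\int_0^r\|\nabla^2 p_{2\eps^2+r-\tau}\|_{L_2(\T)}^2\,d\tau.
\end{equ}
For $r\gg\eps^2$ this is asymptotically $\eps^{-3/2}$ times an explicit constant, which comes from the Gaussian integral $\int_0^\infty\|\nabla^2 p_{2+\tau}\|^2_{L_2(\R)}d\tau$. Hence the expectation term tends to $\delta_{nm}c^2(t-s)$, and we expect this computation to produce the constant $1/(8\sqrt[4]\pi)$ squared. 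The boundary layer $r\lesssim\eps^2$ is negligible.

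For the fluctuation, we compute the $L_2$ norm via Wick's formula. It involves the squared covariance
\begin{equ}
\mathrm{Cov}\big(\nabla P_{\eps^2}X_r(z),\nabla P_{\eps^2}X_{r'}(z')\big).
\end{equ}
This covariance decays on scales $|z-z'|\sim\eps$ and $|r-r'|\sim\eps^2$. Integrating over $z,z',r,r'$ then yields a bound of order $\eps^{3}\cdot\eps^{-3}\cdot\eps^{3}\to0$ (volume factor $\eps\cdot\eps^2$). This decorrelation estimate is the main technical step. We would carry it out with explicit heat kernel bounds \eqref{eq:HK_0}--\eqref{eq:HK_alpha}.
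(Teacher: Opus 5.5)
Your proposal is correct. Its outer structure matches the paper's: L\'evy's characterisation, testing against $\widehat G^{\eps_k}$, passing to the limit by uniform integrability, and transferring to the original space via equality in law.

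The difference is in how the quadratic variation is identified. You prove a stronger statement: the bracket increment $\langle\beta^{\eps,n},\beta^{\eps,m}\rangle_t-\langle\beta^{\eps,n},\beta^{\eps,m}\rangle_s$ converges in $L_1$ to the deterministic $\mathbf{1}_{m=n}c_0(t-s)$. This forces you to control the second-chaos fluctuation of $\eps^{3/2}(\nabla P_{\eps^2}J^\eps_{0,r}[1])^2$.

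The paper only needs $\E\big(G^\eps(K^\eps_{s,t}[fJ^\eps_{0,\cdot}[1]])^2\big)$ for $\mathcal{F}_s$-measurable $G^\eps$. After It\^o's isometry it splits $J^\eps_{0,r}[1]=J^\eps_{s,r}[1]+P_{r-s}J^\eps_{0,s}[1]$. Since $J^\eps_{s,r}[1]$ is independent of $\mathcal{F}_s$, the main term factorises as $\E(G^\eps)\,f(z)^2\,\eps^{3/2}\E\big(\nabla P_{\eps^2}J^\eps_{s,r}[1](z)\big)^2$. So a single second-moment computation suffices, and no decorrelation estimate is needed. The price is the memory term $P_{r-s}J^\eps_{0,s}[1]$, which is $O(\eps^{3/4}(r-s)^{-3/8})$ by \eqref{eq:HK_omega_deriv} and \eqref{eq:J-C1/4}, plus a commutator term.

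Your route gives more: the quadratic variation itself converges, so a martingale CLT would also apply. It avoids the splitting at $s$, but costs a fourth-moment estimate.

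The step you leave as the main obstacle does go through. For $Y_r=\nabla P_{\eps^2}J^\eps_{0,r}[1]$, the semigroup property gives exactly
\[
\mathrm{Cov}\big(Y_r(z),Y_{r'}(z')\big)=\tfrac{\eps^{3/2}}{2}\big(\nabla^2 p_{4\eps^2+r+r'}-\nabla^2 p_{4\eps^2+|r-r'|}\big)(z-z'),
\]
hence $\int_\T\mathrm{Cov}^2\,dz'\lesssim\eps^{3}(\eps^2+|r-r'|)^{-5/2}$. The decay in $|r-r'|$ is only polynomial, not on a sharp scale $\eps^2$ as you suggest. It is nevertheless integrable, and Wick's formula gives a variance of order $\eps^3\cdot\eps^3\int_0^1(\eps^2+h)^{-5/2}\,dh\lesssim\eps^3$.

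One small correction: the cross term between the commutator piece and the main term $\eps^{3/4}e_n\nabla P_{\eps^2}X_r$ is $O(\eps^{3/4})$, not $O(\eps^{3/2})$, since the main term is $O(1)$ in $L_p$. It still vanishes.
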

\begin{proof}
We aim to show that $\widehat{\beta}^n$ are continuous $\widehat{\mathbb{G}}$-martingales and that for all $(s,t)\in[0,1]_\leq^2$
\begin{equ}
\widehat{\E}\big(\widehat{\beta}^n_t\widehat{\beta}^m_t-\widehat{\beta}^n_s\widehat{\beta}^m_s|\widehat{\mathcal{G}}_s\big)
    =\mathbf{1}_{m=n}
    c_0(t-s)\label{eq:var-first}
\end{equ}
with $c_0=(64\sqrt{\pi})^{-1}$.
By L\'evy's characterisation this is sufficient.
Fix $(s,t)\in[0,1]_\leq^2$ and
take  $\widehat{G}$ and $\widehat{G}^{\eps_k}$ of the form  \eqref{eq:def_G_test}-\eqref{eq:def_G_eps_k_test},  and define  $G^\eps$ analogously.
It suffices to show that for any choice of these objects
\begin{equs}
\widehat{\E}\big(\widehat{G}(\widehat{\beta}^n_t-\widehat{\beta}^n_s)\big)&=0,\label{eq:var-easy}
\\
    \widehat{\E}\big(\widehat{G}(\widehat{\beta}^n_t-\widehat{\beta}^n_s)(\widehat{\beta}^m_t-\widehat{\beta}^m_s)\big)&=(\widehat{\E} \widehat{G})\mathbf{1}_{m=n}c_0(t-s).\label{eq:var-hard}
\end{equs}
Indeed, \eqref{eq:var-easy} is precisely the martingale property. Furthermore,
\begin{equ}\label{eq:var-decomp}
   \widehat{\E}\big(\widehat{G}(\widehat{\beta}^n_t-\widehat{\beta}^n_s)(\widehat{\beta}^m_t-\widehat{\beta}^m_s)\big)
   =
 \widehat{\E}\big(\widehat{G}(\widehat{\beta}^n_t\widehat{\beta}^m_t-\widehat{\beta}^n_s\widehat{\beta}^m_s)\big)
-\widehat{\E}\big(\widehat{G}(\widehat{\beta}^n_t-\widehat{\beta}^n_s)\widehat{\beta}^m_s\big)-
\widehat{\E}\big(\widehat{G}(\widehat{\beta}^m_t-\widehat{\beta}^m_s)\widehat{\beta}^n_s\big).
\end{equ}
The latter two terms are easily seen to be $0$, by applying \eqref{eq:var-easy} with $\widehat{G}':=\widehat{G}\beta^m_s$ and $\widehat{G}'':=\widehat{G}\beta^n_s$ in place of $\widehat{G}$, and a standard limiting argument (since $\widehat{G}'$, $\widehat{G}''$ are not bounded). Combining \eqref{eq:var-decomp} with \eqref{eq:var-hard}, we get \eqref{eq:var-first}.

Note that by Lemma \ref{lem:skorohod-corrolary}
\begin{equs}
    \widehat{\E}\big(\widehat{G}(\widehat{\beta}^n_t-\widehat{\beta}^n_s)\big)&=\lim_{\N_1\ni k\to \infty}\widehat{\E}\big(\widehat{G}^{\eps_k}(\widehat{\beta}^{\eps_k,n}_t-\widehat{\beta}^{\eps_k,n}_s)\big)
    \\
    &=\lim_{\N_1\ni k\to \infty}\E \big({G}^{\eps_k}({\beta}^{\eps_k,n}_t-{\beta}^{\eps_k,n}_s)\big)=0,
\end{equs}
since $\beta^{\eps_k,n}$ is by definition a $\mathbb{F}$-martingale, see \eqref{def:beta_epsilon_n}, and $G^{\eps_k}$ is $\mathcal{F}_s$-measurable.

Moving on to \eqref{eq:var-hard}, again by Lemma \ref{lem:skorohod-corrolary}
\begin{equs}
    \widehat{\E}\big(\widehat{G}(\widehat{\beta}^n_t-\widehat{\beta}^n_s)(\widehat{\beta}^m_t-\widehat{\beta}^m_s)\big)&=\lim_{\N_1\ni k\to \infty}\widehat{\E}\big(\widehat{G}^{\eps_k}(\widehat{\beta}^{\eps_k,n}_t-\widehat{\beta}^{\eps_k,n}_s)(\widehat{\beta}^{\eps_k,m}_t-\widehat{\beta}^{\eps_k,m}_s)\big)
    \\
    &=\lim_{\N_1\ni k\to \infty}\E\big(G^{\eps_k}(\beta^{\eps_k,n}_t-\beta^{\eps_k,n}_s)(\beta^{\eps_k,m}_t-\beta^{\eps_k,m}_s)\big).
\end{equs}
Recalling that  $\beta^{\eps,n}_t=K^\eps_{0,t}[e_n J_{0,\cdot}^\eps[1]]$, by polarisation it suffices to show for every $f\in C^1(\T)$
\begin{equ}\label{eq:var-goal}
    \lim_{\N_1\ni k\to \infty}\E\big(G^{\eps_k} (K^{\eps_k}_{s,t}[fJ^{\eps_k}_{0,\cdot}[1]])^2\big)=c_0(t-s)\|f\|_{L^2(\T)}^2\widehat{\E}\, \widehat{G}. 
\end{equ}

Recalling \eqref{eq:integration_white_noise},  by It\^o's isometry we get
\begin{equs}
    \E\big(\sqrt{G^\eps}K_{s,t}^\eps[fJ_{0,\cdot}^\eps[1]]\big)^2&=\eps^{3/2}\E\Big(\int_s^t\int_\T\int_\T \sqrt{G^\eps}\nabla P_{\eps^2}(z-x)f(x)J_{0,r}^\eps[1](x)\,dx\xi(dz,dr)\Big)^2
    \\
    &=\int_s^t\int_\T\eps^{3/2}\E\Big(\sqrt{G^\eps}\int_\T\nabla P_{\eps^2}(z-x)f(x)J_{0,r}^\eps[1](x)\,dx\Big)^2\,dzdr.\label{eq:variance-1}
\end{equs}
Define the quantities $A_1,A_2,A_2$ as
\begin{equs}
    A_1:&=\eps^{3/4}\sqrt{G^\eps}\int_\T\nabla P_{\eps^2}(z-x)f(z)J_{s,r}^\eps[1](x)\,dx,
\\
A_2:&=\eps^{3/4}\sqrt{G^\eps}\int_\T\nabla P_{\eps^2}(z-x)\big(f(x)-f(z)\big)J_{s,r}^\eps[1](x)\,dx,
\\
A_3:&=\eps^{3/4}\sqrt{G^\eps}\int_\T\nabla P_{\eps^2}(z-x)f(x)P_{s-r}(J_{0,s}^\eps[1])(x)\,dx.
\end{equs}
Using the definition of $J^{\eps}_{0,r}[1]$, the independence of $\mathcal{F}_s$ and $\xi|_{[s,t]\times\T}$, the semigroup property, and It\^o's isometry we get
\begin{equs}
    \E A_1^2:&=\eps^{3/2}\E\Big(\sqrt{G^\eps}\int_\T\nabla P_{\eps^2}(z-x)f(z)J_{s,r}^\eps[1](x)\,dx\Big)^2
    \\
    &=(f(z))^2\eps^{3}\E\Big(\sqrt{G^\eps}\int_\T \nabla P_{\eps^2}(z-x)\int_s^r\int_\T\nabla P_{\eps^2+r-\tau}(x-y) \xi(dy,d\tau)\,dx\Big)^2  
    \\
    &=(f(z))^2\eps^{3}\E(G^\eps)\E\Big(\int_\T \nabla P_{\eps^2}(z-x)\int_s^r\int_\T\nabla P_{\eps^2+r-\tau}(x-y) \xi(dy,d\tau)\,dx\Big)^2
    \\
    &=(f(z))^2\eps^{3}\E(G^\eps)\E\Big(\int_s^r\int_\T\nabla^2 P_{2\eps^2+r-\tau}(z-y) \xi(dy,d\tau)\Big)^2
    \\
    &=(f(z))^2\eps^{3}\E(G^\eps)\int_s^r\|\nabla^2 P_{2\eps^2+r-\tau}\|_{L_2(\T)}^2\,d\tau.
\end{equs}
Note that the standard heat kernel bound \eqref{eq:HK_0} already provides an  upper bound of order $1$ for this quantity. In order to get the exact constant first note that
\begin{equ}
    \|\nabla^2 P_t\|_{L_2(\T)}^2=\sum_{k\in\Z}(2\pi k)^4e^{-8\pi^2 k^2t}=(2\pi)^4t^{-2}\sum_{x\in \sqrt{t}\Z}x^4e^{-8\pi^2 x^2}=
    (2\pi)^4t^{-5/2}\Big(\int_\R x^4e^{-8\pi^2 x^2}\,dx+R\Big),
\end{equ}
where the error term $R$ can be estimated by
\begin{equ}
    |R|\leq \sqrt{t}\int_\R \big|\partial_x\big(x^4e^{-8\pi^2 x^2}\big)\big|\,dx\lesssim \sqrt{t}.
\end{equ}
Calculating the Gaussian integral explicitly
\begin{equ}
    \int_\R x^4e^{-8\pi^2 x^2}\,dx=\frac{3}{512\sqrt{2}\pi^{9/2}},
\end{equ}
we get
\begin{equ}
     \|\nabla^2 P_t\|_{L_2(\T)}^2=\frac{3}{32\sqrt{2\pi}}t^{-5/2}+O(t^{-2}),
\end{equ}
and so
\begin{equs}
    \E A_1^2&=\E(G^\eps)(f(z))^2\eps^{3}\Big(\int_0^r\frac{3}{32\sqrt{2\pi}}(2\eps^2+r-\tau)^{-5/2}\,d\tau+O(\eps^{-2})\Big)
    \\
    &=\E(G^\eps)(f(z))^2\eps^{3}\Big(\frac{1}{16\sqrt{2\pi}}\big( (2\eps^2)^{-3/2}-(2\eps^2+r)^{-3/2}\big)+O(\eps^{-2})\Big)\\
    &=\E(G^\eps)(f(z))^2\Big(c_0+O(\eps^{1/2}r^{-1/4})+O(\eps)\Big),
\end{equs}
recalling the notation $c_0=(64\sqrt{\pi})^{-1}$.

Next, for $A_2$
one can simply apply Minkowski's inequality, the upper bound on $G^\eps$, and the bound $\|J_{s,\cdot}^\eps[1]\|_{\cC^{0,0}_2([0,1])}\lesssim 1$ to get
\begin{equ}
    \|A_2\|_{L^2}\lesssim \eps^{3/4}\int_\T|\nabla P_{\eps^2}(z-x)||f(x)-f(z)|\,dx\stackrel{\eqref{eq:HK-power}}{\lesssim}\eps^{3/4}\|f\|_{C^1(\T)}.
\end{equ}

Finally, for $A_3$ we integrate by parts and proceed with Minkowski's inequality
\begin{equs}
    \|A_3\|_{L^2}&\leq\eps^{3/4}\Big\|
    (\sqrt{G^\eps}\int_\T P_{\eps^2}(z-x)\nabla f(x)P_{s-r}(J_{0,s}^\eps[1])(x)\,dx
    \Big\|_{L_2}
    \\
    &\quad+\eps^{3/4}\Big\|
    (\sqrt{G^\eps}\int_\T P_{\eps^2}(z-x) f(x)\nabla P_{s-r}(J_{0,s}^\eps[1])(x)\,dx
    \Big\|_{L_2}
    \\
    &\lesssim \eps^{3/4}\|f\|_{C^1(\T)}\big(\|P_{s-r}J_{0,s}^\eps[1]\|_{\cC^{0}_2}+
    \|\nabla P_{s-r}J_{0,s}^\eps[1]\|_{\cC^{0}_2}\big)   
    \\
    &\stackrel{\eqref{eq:HK_omega_deriv}}{\lesssim} \eps^{3/4}\|f\|_{C^1(\T)}(r-s)^{-3/8}\|J_{0,s}^\eps[1]\|_{\cC^{1/4}_2}
    \\
    &\lesssim \eps^{3/4}\|f\|_{C^1(\T)}(r-s)^{-3/8},
\end{equs}
where in the last inequality we used \eqref{eq:J-C1/4} with $f\equiv 1$.

Substituting back into \eqref{eq:variance-1}, we arrive to
\begin{equs}
    \E\big(K_{0,t}[fJ_{0,\cdot}^\eps[1]]\big)^2&=\int_s^t\int_\T \E\big((A_1+A_2+A_3)^2\big)\,dzdr\\
    &=c_0(t-s)\|f\|_{L_2(\T)}^2\E(G^\eps)+O(\eps^{1/2}).
\end{equs}
Further, using this along the sequence $(\eps_k)_{k\in\N_1}$, and recalling that $G^{\eps_k}\overset{\mathrm{law}}{=}\widehat{G}^{\eps_k}$,  we have 
\begin{equs}
    \E\big(K_{0,t}[fJ_{0,\cdot}^{\eps_k}[1]]\big)^2&=c_0(t-s)\|f\|_{L_2(\T)}^2\widehat{\E}(\widehat{G}^{\eps_k})+O(\eps_k^{1/2}).
\end{equs}
By letting $\mathbb{N}_1 \ni k\to \infty$, we get \eqref{eq:var-goal} and the proof is finished.
\end{proof}

\subsection{Sketch of the strategy}

Recalling \eqref{eq:def_hat_M}, we would like to show that 
\begin{equs}
    \widehat{M}_t(\phi) = \sum_{n =1}^\infty \int_0^t (g'g(\widehat{u}_r) e_n, \phi) \, d\widehat{\beta}^n_r.\label{eq:martingale-goal}
\end{equs}
Since the tools at our disposal are the convergences $\widehat{u}^{\eps_k}\to\widehat{u}$ and $\widehat{\beta}^{n,\eps_k}\to\widehat{\beta}^n$, we would like to recognise the stochastic integrals
\begin{equ}
    \sum_{n =1}^\infty \int_0^t (g'g(\widehat{u}^{\eps_k}_r) e_n, \phi) \, d\widehat{\beta}^{n,\eps_k}_r
\end{equ}
in the martingales $\widehat{M}^{\eps_k}(\phi)$. 

It turns out to be more convenient to work with temporally ``frozen" integrands.
This will be done several steps below, but let us first motivate why this is sufficient to achieve \eqref{eq:martingale-goal}. Since $t \mapsto \widehat{u}_t$ has positive temporal regularity in the sense of  \eqref{eq:limit_time_reg}, it is a consequence of the the stochastic sewing lemma (see \cite[Example~2.10]{SSL}) that the right-hand side of \eqref{eq:martingale-goal} can be characterised as the unique martingale $\mathcal{M}$ that satisfies with some $\eps>0$ and $N<\infty$ for all $(s,t)\in[0,1]_{\leq}^2$
\begin{equs}        \label{eq:est1/2}
   \|\mathcal{M}_t- \mathcal{M}_s - \sum_{n=1}^\infty \int_s^t \big(g'g(P_{r-s}\widehat{u}_s) e_n, \phi\big) \, d\widehat{\beta}^n_r\|_{L_2} \leq N |t-s|^{1/2+\eps}.
\end{equs}
Hence it suffices to prove \eqref{eq:est1/2} for $\mathcal{M}=\widehat{M}(\phi)$.  It turns out that this can be achieved with $\eps=1/8$. This argument  will be made rigorous  in the proof of our main theorem. 

Notice that  \eqref{eq:est1/2} characterizes $\mathcal{M}$ by germs where the integrand is frozen in time. To show \eqref{eq:est1/2}, we will use a limiting procedure, hence, we need to freeze the corresponding quantities at the approximate level. 
First we recall that
\begin{equs}         \label{eq:identity_before_limit}
    \widehat{M}^{\eps_k}_t(\phi)-  \widehat{M}^{\eps_k}_s(\phi)= \widehat{K}^{\eps_k}_{s,t}[\phi \,  g(\widehat{u}^{\eps_k})],
\end{equs}
so as a first freezing we write
\begin{equs}
\widehat{K}^{\eps_k}_{s,t}[\phi \, g(\widehat{u}^{\eps_k})] & =    \widehat{K}^{\eps_k}_{s,t}[\phi \, g(P_{\cdot-s}\widehat{u}^{\eps_k}_s)]
  +  \widehat{K}^{\eps_k}_{s,t}[\phi \big(g(\widehat{u}^{\eps_k})- g(P_{\cdot-s}\widehat{u}^{\eps_k}_s)\big)].        \label{eq:first_decomp_K}
\end{equs}
Next, let us set
\begin{equs}
    G_{s,r}[\widehat{u}^{\eps_k}](x):=  \int_0^1 g'\big( \theta \widehat{u}^{\eps_k}_r(x)+(1-\theta) P_{r-s}\widehat{u}^{\eps_k}_s(x) \big) \, d\theta. \label{eq:definition_of_G}
\end{equs}
By the fundamental theorem of calculus we have 
\begin{equs}
    g(\widehat{u}^{\eps_k}_r(x))- g(P_{r-s}\widehat{u}^{\eps_k}_s(x)) &=   G_{s,r}[\widehat{u}^{\eps_k}](x)\big(\widehat{u}^{\eps_k}_r(x)-P_{r-s}\widehat{u}^{\eps_k}_s(x)\big)
    \\
    &=  G_{s,r}[\widehat{u}^{\eps_k}](x)\,  \widehat{J}^{\eps_k}_{s,r}[g(\widehat{u}^{\eps_k})](x)
    \\
    &=  g'(P_{r-s}\widehat{u}^{\eps_k}_s(x))\,  \widehat{J}^{\eps_k}_{s,r}[g(\widehat{u}^{\eps_k})](x)
    \\ 
    & \qquad+\big(G_{s,r}[\widehat{u}^{\eps_k}](x)-g'(P_{r-s}\widehat{u}^{\eps_k}_s(x))\big)\,  \widehat{J}^{\eps_k}_{s,r}[g(\widehat{u}^{\eps_k})](x)  .   \label{eq:decomp_g}
\end{equs}
We can further expand the second factor of the first term above as follows 
\begin{equs}
     \widehat{J}^{\eps_k}_{s,r}[g(\widehat{u}^{\eps_k})](x) 
      &=   g(P_{r-s}\widehat{u}^{\eps_k}_s(x))\widehat{J}^{\eps_k}_{0,r}[1](x)
      \\
       & \qquad - g(P_{r-s}\widehat{u}^{\eps_k}_s(x))\big(P_{r-s}\widehat{J}^{\eps_k}_{0,s}[1]\big)(x)
     \\
     & \qquad + \widehat{J}^{\eps_k}_{s,r}[g(P_{\diamond-s}\widehat{u}^{\eps_k}_s)](x) -g(P_{r-s}\widehat{u}^{\eps_k}_s)\widehat{J}^{\eps_k}_{s,r}[1](x)
     \\  & \qquad + \widehat{J}^{\eps_k}_{s,r}[g(\widehat{u}^{\eps_k})-g(P_{\diamond-s}\widehat{u}^{\eps_k}_s)](x).
\end{equs}
Replacing this in \eqref{eq:decomp_g} we get 
\begin{equs}
     g(\widehat{u}^{\eps_k}_r(x))- g(P_{r-s}\widehat{u}^{\eps_k}_s(x)) &= 
      g'g(P_{r-s}\widehat{u}^{\eps_k}_s(x))\, \widehat{J}^{\eps_k}_{0,r}[1](x)
    \\ 
    & \qquad -   g'g(P_{r-s}\widehat{u}^{\eps_k}_s(x)) \big(P_{r-s}\widehat{J}^{\eps_k}_{0,s}[1]\big)(x)
    \\
    &\qquad + g'(P_{r-s}\widehat{u}^{\eps_k}_s(x))\big( \widehat{J}^{\eps_k}_{s,r}[g(P_{\diamond-s}\widehat{u}^{\eps_k}_s)](x) -g(P_{r-s}\widehat{u}^{\eps_k}_s)\widehat{J}^{\eps_k}_{s,r}[1](x)\big)
    \\
     &\qquad + g'(P_{r-s}\widehat{u}^{\eps_k}_s(x))\widehat{J}^{\eps_k}_{s,r}[g(\widehat{u}^{\eps_k})-g(P_{\diamond-s}\widehat{u}^{\eps_k}_s)](x)
     \\
    & \qquad+\big(G_{s,r}[\widehat{u}^{\eps_k}](x)-g'(P_{r-s}\widehat{u}^{\eps_k}_s(x))\big)\,  \widehat{J}^{\eps_k}_{s,r}[g(\widehat{u}^{\eps_k})](x).    
\end{equs}
Hence, replacing this in \eqref{eq:first_decomp_K}, we get
\begin{equs}
    \widehat{K}^{\eps_k}_{s,t}[\phi \, g(\widehat{u}^{\eps_k})] & =    \widehat{K}^{\eps_k}_{s,t}\big[\phi  g'g(P_{\cdot-s}\widehat{u}^{\eps_k}_s)\, \widehat{J}^{\eps_k}_{0,\cdot}[1]\big]
    \\
    &\qquad+\widehat{K}^{\eps_k}_{s,t}[\phi \, g(P_{\cdot-s}\widehat{u}^{\eps_k}_s)]
    \\
    &\qquad - \widehat{K}^{\eps_k}_{s,t}\big[ \phi  g'g(P_{\cdot-s}\widehat{u}^{\eps_k}_s)\, \big(P_{\cdot-s}\widehat{J}^{\eps_k}_{0,s}[1]\big) \big]
    \\
     &\qquad +  \widehat{K}^{\eps_k}_{s,t}\big[ \phi  g'(P_{\cdot-s}\widehat{u}^{\eps_k}_s)\big( \widehat{J}^{\eps_k}_{s,\cdot}[g(P_{\diamond-s}\widehat{u}^{\eps_k}_s)] -g(P_{\cdot-s}\widehat{u}^{\eps_k}_s)\widehat{J}^{\eps_k}_{s,\cdot}[1]\big)   \big]
    \\
    &\qquad +  \widehat{K}^{\eps_k}_{s,t}\big[ \phi  g'(P_{\cdot-s}\widehat{u}^{\eps_k}_s)\widehat{J}^{\eps_k}_{s,\cdot}[g(\widehat{u}^{\eps_k})-g(P_{\diamond-s}\widehat{u}^{\eps_k}_s)]  \big]
    \\
    & \qquad + \widehat{K}^{\eps_k}_{s,t}\big[ \phi \big(G_{s,\cdot}[\widehat{u}^{\eps_k}]-g'(P_{\cdot-s}\widehat{u}^{\eps_k}_s)\big)\,  \widehat{J}^{\eps_k}_{s,\cdot}[g(\widehat{u}^{\eps_k})]  \big]
    \\
    &=: \sum_{m=1}^6 \widehat{\mathbb{K}}_{s,t}^{\eps_k, m}.             \label{eq:K_decomp_bold_Ks}
\end{equs}
The first of these terms will be shown to converge to the sum of stochastic integrals in \eqref{eq:est1/2}. This is the content of Lemma \ref{lem:germ_delta_estimate} below. 
Therefore, in order to obtain \eqref{eq:est1/2} with $\mathcal{M}=\widehat{M}(\phi)$, it is sufficient to prove the bounds
\begin{equ}
   \|\widehat{\mathbb{K}}_{s,t}^{\eps_k, m}\|_{L^2}  \lesssim \eps_k^{1/4}, \, \text{for $m=2,3$},\qquad  \text{and} \qquad \|\widehat{\mathbb{K}}_{s,t}^{\eps_k, m}\|_{L^2}\lesssim |t-s|^{5/8},\, \text{for $m=4,5,6$.}
\end{equ}
These bounds are proved in Lemmata \ref{lem:K_2K_3}, \ref{lem:K4}, \ref{lem:K5}, and \ref{lem:K6}  below.

\subsection{Convergence of the main term}

\begin{lemma}                       \label{lem:germ_delta_estimate}
Let $\phi:\T\to\R$ be twice continuously differentiable.
Then for all $(s,t)\in[0,1]_{\leq}^2$ one has
\begin{equs}
    \lim_{\mathbb{N}_1 \ni k\to \infty}\Big\|\widehat{\mathbb{K}}^{\eps_k,1}_{s,t}- \sum_{n=1}^\infty \int_s^t  \big(g'g(P_{r-s}\widehat{u}_s)e_n, \phi \big) \, d \widehat{\beta}^n_r \Big\|_{L_2}=0.  \label{eq:convergence_K_1}
\end{equs}
\end{lemma}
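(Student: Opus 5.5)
Using \eqref{eq:integral-bet}, I will first rewrite $\widehat{\mathbb{K}}^{\eps_k,1}_{s,t}$ as a sum of stochastic integrals against $\widehat\beta^{\eps_k,n}$. By Lemma \ref{lem:skorohod-corrolary}(iii), and since $\widehat{\beta}^{\eps_k,n}$ is the $\widehat K^{\eps_k}$-integral of $e_n\widehat J^{\eps_k}_{0,\cdot}[1]$, one has
\begin{equ}
\widehat{\mathbb{K}}^{\eps_k,1}_{s,t}=\sum_{n=1}^\infty\int_s^t\big(\phi\, g'g(P_{r-s}\widehat u^{\eps_k}_s),e_n\big)\,d\widehat\beta^{\eps_k,n}_r .
\end{equ}
This holds provided we first truncate the integrand $h^{\eps_k}_r:=\phi\, g'g(P_{r-s}\widehat u^{\eps_k}_s)$ to its projection $\Pi_M h^{\eps_k}_r$ onto $\mathrm{span}\{e_1,\dots,e_M\}$, and then control the tail. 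The plan is a three-step approximation. (a) Truncate in $n$ at level $M$, both at the $\eps_k$ level and in the limit. (b) Discretise time: replace $r\mapsto (h^{\eps_k}_r,e_n)$ by a piecewise constant process on a partition $s=r_0<\dots<r_L=t$. (c) Pass to the limit $k\to\infty$ in the resulting finite sums of products $(h^{\eps_k}_{r_i},e_n)(\widehat\beta^{\eps_k,n}_{r_{i+1}}-\widehat\beta^{\eps_k,n}_{r_i})$ using Lemma \ref{lem:skorohod-corrolary}(ii). Finally, send $L,M\to\infty$.

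For the truncation errors at the $\eps_k$ level I will use Lemma \ref{lem:basic_estimate_K}, but with care. The error is $\widehat K^{\eps_k}_{s,t}[(h^{\eps_k}-\Pi_Mh^{\eps_k})\widehat J^{\eps_k}_{0,\cdot}[1]]$, and a crude bound by Lemma \ref{lem:basic_estimate_K} with $\alpha=1/4$ would need $\cC^{1/4}_p$ smallness of $(h-\Pi_Mh)J$, which is not available. I will instead use the variance structure from the proof of the preceding lemma (the $A_1,A_2,A_3$ decomposition). The point is that, for a smooth in space, $\mathcal F_s$-measurable factor $f$, the second moment of $K^\eps_{s,t}[fJ^\eps_{0,\cdot}[1]]$ is bounded by $\int_s^t\|f_r\|^2_{L_2(\T)}$-type quantities plus $O(\eps^{1/2})\|f\|_{C^1}^2$. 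Since $g'g$ is Lipschitz with bounded derivative (Assumption \ref{as:g}), $\phi\in C^2$, and $P_{r-s}\widehat u_s$ is smooth for $r>s$, with $\|\nabla P_{r-s}\widehat u_s\|\lesssim (r-s)^{-3/8}$ by \eqref{eq:HK_omega_deriv} and \eqref{eq:limit_C_1/4}, the $L_2$-norm of $h-\Pi_Mh$ is $\lesssim M^{-1/4}$ uniformly (interpolating the $C^{1/4}$ bound). The time singularity at $r=s$ is integrable. The frozen integrand is not adapted in $r$ beyond $\mathcal F_s$, but $P_{r-s}\widehat u_s$ is $\mathcal F_s$-measurable, so the independence used in the $A_1$ computation still applies. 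In the limit, the tail is controlled by the It\^o isometry with quadratic variation $c_0\,dt$ from the previous lemma.

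For the time discretisation, at the limit level $\|(h_r-h_{r'},e_n)\|_{L_2}\lesssim \|\widehat u_s\|\,|r-r'|^{1/8}(r-s)^{-\cdot}$ is small for $M$ fixed, with constants depending on $M$. At the $\eps_k$ level the same holds uniformly in $k$, using the $\eps_k$-uniform bound from Theorem \ref{lem:niform_bound_1/4} and again the It\^o isometry with the variance bound for $\widehat\beta^{\eps_k,n}$ from Lemma \ref{lem:noise-compactness}. With $M,L$ fixed, step (c) is a finite sum. Each term converges in $L_2$ by the almost sure convergence $\widehat u^{\eps_k}_s\to\widehat u_s$ uniformly (hence $(h^{\eps_k}_{r_i},e_n)\to(h_{r_i},e_n)$ boundedly in $L_p$) and $\widehat\beta^{\eps_k,n}\to\widehat\beta^n$ in $L_p$, by Lemma \ref{lem:skorohod-corrolary}(ii) together with H\"older's inequality.

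The main obstacle is the tail estimate in $n$ uniformly in $k$. The individual $\widehat\beta^{\eps_k,n}$ have a quadratic variation that is only asymptotically diagonal, and a bound summed over $n$ of the kind in Lemma \ref{lem:noise-compactness} grows like $n^{1/2}$. So I will avoid summing per-$n$ bounds, and will estimate the tail integrand directly as one $\widehat K^{\eps_k}$-integral, reproducing the $A_1$–$A_3$ computation with $f=h-\Pi_Mh$, whose leading term is $c_0\int_s^t\|f_r\|^2_{L_2}dr$.
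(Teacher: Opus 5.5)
Your proposal is correct, but it reaches the result by a different route from the paper in two places.

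\textbf{The $\eps_k$-level tail.} The paper does not use any variance structure for the tail $\widehat K^{\eps_k}_{s,t}[(F^{\eps_k}-\Pi_MF^{\eps_k})\widehat J^{\eps_k}_{0,\cdot}[1]]$. Instead it cuts off $[s,s+\delta]$; both the prelimit and the limit pieces there are $O(\delta^{1/2})$, by Lemma \ref{lem:basic_estimate_K} and It\^o's isometry. On $[s+\delta,t]$ the smoothing of $P_{r-s}$ gives $\|\Delta F^{\eps_k}\|_{\mathcal{C}^{0,0}_4([s+\delta,t])}\lesssim\delta^{-4}$, hence $\|F^{\eps_k}-\Pi_MF^{\eps_k}\|_{\mathcal{C}^{0,1/4}_4([s+\delta,t])}\lesssim\delta^{-4}M^{-3/4}$. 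After that the crude Lemma \ref{lem:basic_estimate_K} with $\alpha=1/4$ suffices. So the $\mathcal{C}^{1/4}$-smallness you declare unavailable is in fact available away from $r=s$, and that is exactly the paper's route.

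You instead rerun the $A_1$--$A_3$ decomposition from the limiting-noise lemma, with the random, $\mathcal F_s$-measurable, time-dependent $f_r=h_r-\Pi_Mh_r$. The $A_1$ part is $\lesssim\int_s^t\E\|f_r\|^2_{L_2(\T)}\,dr$, by independence from $\mathcal F_s$ and \eqref{eq:HK_0}. The remaining terms are $O(\eps_k^{3/4})$ times an $M$-dependent constant (through $\|\nabla\Pi_Mh_r\|_{\mathcal{C}^0_4}\lesssim M^2$) and an integrable power of $(r-s)^{-1}$, so they vanish as $k\to\infty$ for fixed $M$. This works, needs no $\delta$-cutoff, and uses only $L_2(\T)$-smallness of the tail. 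The price is that you must redo that computation for random, time-dependent integrands rather than quote an existing estimate.

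\textbf{The finite-$M$ limit.} For fixed $M$ (and $\delta$) the paper invokes the Kurtz--Protter theorem (Lemma \ref{lem:convergence_stochastic_integrals}) together with uniform $L_p$ bounds. You discretise in time instead. This is elementary here, because $r\mapsto(h_r,e_n)$ is $\mathcal F_s$-measurable and $1/8$-H\"older uniformly in $k$, so step (c) reduces to finitely many products of $L_p$-convergent variables.

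Your route is more self-contained and exploits the finer $c_0\|f\|_{L_2}^2$ structure. The paper's is shorter because it recycles the crude a priori bound and an off-the-shelf limit theorem.

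\textbf{Minor corrections (none fatal).}
\begin{enumerate}[(i)]
\item $g'g$ is not globally Lipschitz, since $(g'g)'=g''g+(g')^2$ grows linearly. Bounds on $\nabla h_r$ therefore have to go through H\"older's inequality and the moment bounds of Theorem \ref{lem:niform_bound_1/4}, as the paper does for $\Delta F^{\eps_k}$.
\item The $\eps_k$-level discretisation error has a time-varying integrand. It should be bounded by Lemma \ref{lem:basic_estimate_K} applied to $\Pi_M(h_r-h_{r_i})\widehat J^{\eps_k}_{0,\cdot}[1]$, which gives $\lesssim M^{5/4}\max_i|r_{i+1}-r_i|^{1/8}$. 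Lemma \ref{lem:noise-compactness} only controls increments of $\widehat\beta^{\eps_k,n}$, i.e. constant integrands.
\item Interpolating the $C^{1/4}$ bound yields $M^{-\beta}$ for $\beta<1/4$, not $M^{-1/4}$. This does not affect the argument.
\end{enumerate}
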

\begin{proof}

Throughout the proof,  we repeatedly use moment estimates from Section \ref{sec:apriori-1} with $\widehat{u}^{\eps_k}$, $\widehat{K}^{\eps_k}_{s,t}$, $\widehat{J}^{\eps_k}_{s,t}(x)$ in place of
${u}^{\eps_k}$, ${K}^{\eps_k}_{s,t}$, ${J}^{\eps_k}_{s,t}(x)$, which follow from $(\widehat{u}^{\eps_k},\widehat{w}^{\eps_k})\stackrel{\mathrm{law}}{=}(u^{\eps_k},w)$.
Let us introduce the notation 
\begin{equs}
   F^{\eps_k}_r(x):= \phi(x)  g'g\big(P_{r-s}\widehat{u}^{\eps_k}_s(x) \big), \qquad  F_r(x):= \phi(x)  g'g\big(P_{r-s}\widehat{u}_s(x) \big),
\end{equs}
for $r \in [s,1]$, $x \in \mathbb{T}$.
Let $\delta\in(0,t-s)$ and $M\in\N$ and denote by $\Pi_M$ the orthogonal projection from $L_2(\mathbb{T})$ on $\text{span}(e_1,\dots, e_M)$.
We write
\begin{equs}
     \Big\|\widehat{\mathbb{K}}^{\eps_k,1}_{s,t}- &\sum_{n=1}^\infty \int_s^t  \big(g'g(P_{r-s}\widehat{u}_s)e_n, \phi \big) \, d \widehat{\beta}^n_r \Big\|_{L_2}
   \\
   & \leq  \Big\| \widehat{K}^{\eps_k}_{s,s+\delta}\big[\phi  g'g(P_{\cdot-s}\widehat{u}^{\eps_k}_s)\, \widehat{J}^{\eps_k}_{0,\cdot}[1]\big]\Big\|_{L_2}
   + \Big\|\sum_{n=1}^\infty \int_s^{s+\delta}  \big(g'g(P_{r-s}\widehat{u}_s)e_n, \phi \big) \, d \widehat{\beta}^n_r \Big\|_{L_2}
    \\
    & \qquad + \Big\|\widehat{K}^{\eps_k}_{s+\delta,t}\big[ (F^{\eps_k}-\Pi_MF^{\eps_k})\, \widehat{J}^{\eps_k}_{0,\cdot}[1]\big]  \Big\|_{L_2}+ \Big\| \sum_{n=M+1}^\infty \int_{s+\delta}^t  \big(F_r, e_n \big) \, d \widehat{\beta}^n_r \Big\|_{L_2}
     \\
     & \qquad+\Big\|\widehat{K}^{\eps_k}_{s+\delta,t}\big[ (\Pi_MF^{\eps_k})\, \widehat{J}^{\eps_k}_{0,\cdot}[1]\big]- \sum_{n=1}^M \int_{s+\delta}^t  \big(F_r, e_n \big) \, d \widehat{\beta}^n_r \Big\|_{L_2}
    \\    
    & =: \sum_{i=1}^5 A_i(\delta,M,k).\label{eq:decomp_Qis}
\end{equs}

Applying Lemma \ref{lem:basic_estimate_K} with $\alpha=1/2$ yields
\begin{equs}
   A_1(\delta,M,k) & \lesssim  \|\phi  g'g(P_{\cdot-s}\widehat{u}^{\eps_k}_s)\, \widehat{J}^{\eps_k}_{0,\cdot}[1]\big] \|_{\mathcal{C}^{0,1/4}_2([s, s+\delta])} \delta^{1/2}
   \\
  & \lesssim  \| g'g(P_{\cdot-s}\widehat{u}^{\eps_k}_s)\|_{\mathcal{C}^{0,1/4}_4([s, s+\delta])}\| \widehat{J}^{\eps_k}_{0,\cdot}[1]\big] \|_{\mathcal{C}^{0,1/4}_4([s, s+\delta])} \delta^{1/2}
  \\
  & \lesssim (1+ \| \widehat{u}^{\eps_k}\|_{\mathcal{C}^{0,1/4}_8([0, 1])}^2)\|\widehat{J}^{\eps_k}_{0,\cdot}[1]\big] \|_{\mathcal{C}^{0,1/4}_4([s, s+\delta])} \delta^{1/2}
   \\
  & \lesssim \delta^{1/2}, \label{eq:Q1}
\end{equs}
where for the third inequality we have used our assumption on $g$ and \eqref{eq:HK_omega_holder}, while for  last inequality we have used Theorem \ref{lem:niform_bound_1/4} and \eqref{eq:J-C1/4}.

Next, by It\^o's isometry and Parseval's identity, we have 
\begin{equs}
    \big(A_2(\delta,M,k)\big)^2 = \int_s^{s+\delta}  \widehat{\E} \| g'g(P_{r-s}\widehat{u}_s) \phi \|^2_{L_2(\mathbb{T})} \, dr \lesssim  \int_s^{s+\delta}(1+\| \widehat{u}\|_{\mathcal{C}^{0,0}_2([0, 1])}^2) \, dr \lesssim \delta, \label{eq:Q2}
\end{equs}
using the assumption on $g$ and \eqref{eq:limit_C_1/4}.

Next, by using Lemma  \ref{lem:basic_estimate_K} and \eqref{eq:J-C0}, we get
\begin{equs}
    A_3(\delta,M,k)&\lesssim \| (F^{\eps_k}-\Pi_MF^{\eps_k})\widehat{J}^{\eps_k}_{0,\cdot}[1]\big]\|_{\mathcal{C}^{0,1/4}_2([s+\delta, t])} 
        \\
    &  \lesssim \| F^{\eps_k}-\Pi_MF^{\eps_k}\|_{\mathcal{C}^{0,1/4}_{4}([s+\delta, t])}    
    \\
    &\lesssim  \sum_{n=M+1}^\infty n^{-2} \|( \Delta F^{\eps_k}, e_n)e_n \, \|_{\mathcal{C}^{0,1/4}_{4}([s+\delta, t])}
     \\
     & \lesssim  \| \Delta F^{\eps_k}\|_{\mathcal{C}^{0,0}_{4}([s+\delta, t])} \sum_{n=M+1}^\infty n^{-7/4}.
\end{equs}
By our assumption on $g$ and $\phi$, it follows that
\begin{equs}
    |\Delta F^{\eps_k}_r(x)|\lesssim 1+\sum_{j=0}^2|\nabla ^j P_{r-s}\widehat{u}^{\eps_k}_s(x)|^4 
\end{equs}
so that upon using \eqref{eq:HK_omega_deriv}, Theorem \ref{lem:niform_bound_1/4} and the fact that $r \geq s+\delta$, we obtain 
\begin{equs}
    \| \Delta F^{\eps_k}\|_{\mathcal{C}^{0,0}_{4}([s+\delta, t])} \lesssim \delta^{-4}.
\end{equs}
Therefore,
\begin{equ}
    A_3\lesssim \delta^{-4}M^{-3/4}.
\end{equ}

Next, by It\^o's isometry
\begin{equs}
    \big(A_4(\delta,M,k)\big)^2=\int_{s+\delta}^t\widehat{\E}\|F_r-\Pi_M F_r\|_{L^2(\T)}^2\,dr\leq M^{-1/4}\int_s^t\widehat{\E}\|F_r\|_{H^{1/8}}^2\,dr,
\end{equs}
where we use the standard Sobolev norm defined as $\|f\|_{H^{s}(\T)}^2=\sum_{n\in\N}n^{2s}(f,e_n)^2$.
From the standard embedding $C^{\alpha}(\T)\subset H^{\beta}(\T)$ for all $0\leq \beta<\alpha\leq 1$ and the embedding $\mathcal{C}^{\alpha}_p\subset L_p(\widehat{\Omega},C^{\alpha-2/p}(\T))$ from
Kolmogorov's continuity theorem, we get
\begin{equ}
    A_4(\delta,M,k)\lesssim M^{-1/8}\|F\|_{\mathcal{C}^{0,1/4}_{100}}^2\lesssim M^{-1/8}.
\end{equ}

Finally, for $A_5$ first notice that
 by virtue of  of Lemma \ref{lem:noise-conv2} and the regularity of $g$,  for any $n \in \mathbb{N}$  we have 
\begin{equs}    \label{eq:F_eps_convergence}
    \sup_{r \in [0,1]}\big( |(F^{\eps_k}_r, e_n) - (F^{\eps_k}_r, e_n)|+|\widehat{\beta}^{\eps_k,n}_r-\widehat{\beta}^n_r|\big) \to 0, \qquad \text{as $\mathbb{N}_1 \ni k \to \infty$},
\end{equs}
in probability. Also, one has
\begin{equ}
    \widehat{K}^{\eps_k}_{s+\delta,t}\big[ (\Pi_MF^{\eps_k})\, \widehat{J}^{\eps_k}_{0,\cdot}[1]\big]
    = \sum_{n=1}^M \int_{s+\delta}^t  (F^{\eps_k}_r, e_n) \, d \widehat{\beta}^{\eps_k,n}_r,
\end{equ}
see \eqref{eq:integral-bet}.
 Consequently, by using \eqref{eq:F_eps_convergence} and Lemma \ref{lem:convergence_stochastic_integrals}, we get 
\begin{equs}
  \widehat{K}^{\eps_k}_{s+\delta,t}\big[ &(\Pi_MF^{\eps_k})\, \widehat{J}^{\eps_k}_{0,\cdot}[1]\big] - \sum_{n=1}^M \int_{s+\delta}^t  \big(F_r, e_n \big) \, d \widehat{\beta}^n_r 
 \to 0, \qquad \text{as $\mathbb{N}_1 \ni k \to \infty$}, \label{eq:bounded_in_Lp}
\end{equs}
in probability. Moreover, by using Lemma  \ref{lem:basic_estimate_K} and \eqref{eq:J-C0}, we get 
\begin{equs}
    \big\| \widehat{K}^{\eps_k}_{s+\delta,t}\big[ (\Pi_MF^{\eps_k})\, \widehat{J}^{\eps_k}_{0,\cdot}[1]\big] \big\|_{L_p} & \lesssim \| (\Pi_MF^{\eps_k})\widehat{J}^{\eps_k}_{0,\cdot}[1]\big]\|_{\mathcal{C}^{0,1/4}_p([s+\delta, t])} 
  \\
  & \lesssim \| \Pi_MF^{\eps_k}\|_{\mathcal{C}^{0,1/4}_{2p}([s+\delta, t])}.  
\label{eq:est_K_Proj}
\end{equs}
In addition, it is easy to see that 
\begin{equs} 
    \| \Pi_MF^{\eps_k}\|_{\mathcal{C}^{0,1/4}_{2p}([s+\delta, t])} & = \|  \sum_{n=1}^M(F^{\eps_k}, e_n)e_n\|_{\mathcal{C}^{0,1/4}_{2p}([s+\delta, t])}\lesssim  M^{5/4} \| F^{\eps_k}\|_{\mathcal{C}^{0,0}_{2p}([s+\delta, t])}
    \\
    & \lesssim  M^{5/4}(1+\| u^{\eps_k}\|_{\mathcal{C}^{0,0}_{2p}([s+\delta, t])})\lesssim  M^{5/4},
\end{equs}
where we have used Theorem \ref{lem:niform_bound_1/4} for the last inequality. Hence, the quantity in \eqref{eq:bounded_in_Lp} is bounded in $L_p$ uniformly in $k$ for any $p \geq 1 $, and by uniform integrability we obtain that for any $M$ and $\delta$, 
\begin{equs}   \label{eq:Q_3_1}
    \lim_{\mathbb{N}_1 \ni k\to \infty} A_5(\delta,M,k)=0.
\end{equs}
We conclude that for all $i=1,2,3,4,5$, one has
\begin{equ}
    \limsup_{\delta\to 0}\limsup_{M\to\infty}\limsup_{\mathbb{N}_1 \ni k\to \infty} A_i(\delta,M,k)=0,
\end{equ}
which by \eqref{eq:decomp_Qis} proves the claim.
\end{proof}

\subsection{Estimating the remainders}

\begin{lemma}        \label{lem:K_2K_3}
    For any $\phi \in C^{1/2}(\T)$,
    there exists a constant $N=N(C_g, \|\psi\|_{C^{1/4}},\|\phi\|_{C^{1/2}})$ such that for all $k\in\N_1$,
    $m \in \{2,3\}$, and $(s,t) \in [0,1]^2_{\leq}$, we have 
     \begin{equs}
        \|\widehat{\mathbb{K}}^{\eps_k,m}_{s,t}\|_{L_2}\leq N \eps_k^{1/4}.
     \end{equs}
\end{lemma}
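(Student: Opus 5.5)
The plan is to apply Lemma \ref{lem:basic_estimate_K} with $\alpha=1/2$. That lemma gives a factor $\eps_k^{2\alpha-1/2}=\eps_k^{1/2}$ in front of the squared norm, hence $\eps_k^{1/4}$ after taking square roots. It then suffices to show that the integrands of $\widehat{\mathbb{K}}^{\eps_k,2}_{s,t}$ and $\widehat{\mathbb{K}}^{\eps_k,3}_{s,t}$ are bounded in $\cC^{1/2}_2$ uniformly in $r\in[s,t]$ and in $k$, or at least that $\int_s^t\|f_r\|_{\cC^{1/2}_2}^2\,dr$ is bounded. The point is that both integrands are frozen at time $s$ and smoothed by $P_{r-s}$. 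The only rough factor is $P_{r-s}$ applied to something that is $1/4$-Hölder (in the $\cC_p$ sense). Lifting from $\cC^{1/4}$ to $\cC^{1/2}$ costs $(r-s)^{-1/8}$ by \eqref{eq:HK_omega_holder}, which is square-integrable in $r$.

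For $m=2$ the integrand is $f_r=\phi\, g(P_{r-s}\widehat u^{\eps_k}_s)$. Since $g$ is Lipschitz, $[g(v)]_{\cC^{1/2}_2}\le C_g[v]_{\cC^{1/2}_2}$ and $\|g(v)\|_{\cC^0_2}\lesssim 1+\|v\|_{\cC^0_2}$. The product with the deterministic $\phi\in C^{1/2}$ is handled by $\|\phi h\|_{\cC^{1/2}_2}\lesssim\|\phi\|_{C^{1/2}}\|h\|_{\cC^{1/2}_2}$. By \eqref{eq:HK_omega_holder} with $\alpha=1/4,\gamma=1/2$ we get $\|P_{r-s}\widehat u^{\eps_k}_s\|_{\cC^{1/2}_2}\lesssim (r-s)^{-1/8}\|\widehat u^{\eps_k}_s\|_{\cC^{1/4}_2}$. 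The last norm is bounded by Theorem \ref{lem:niform_bound_1/4}, transferred to the hatted objects by equality in law. Integrating $(r-s)^{-1/4}$ over $[s,t]$ gives a bounded quantity.

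For $m=3$ the integrand is a product $f_r=\phi\cdot g'g(P_{r-s}\widehat u^{\eps_k}_s)\cdot P_{r-s}\widehat J^{\eps_k}_{0,s}[1]$ of two random fields, so I would use the Hölder-type product bound $\|ab\|_{\cC^{1/2}_2}\lesssim\|a\|_{\cC^{1/2}_4}\|b\|_{\cC^{1/2}_4}$. For $b=P_{r-s}\widehat J^{\eps_k}_{0,s}[1]$, combine \eqref{eq:HK_omega_holder} with \eqref{eq:J-C1/4} for $f\equiv1$: the latter gives $\|\widehat J^{\eps_k}_{0,s}[1]\|_{\cC^{1/4}_4}\lesssim1$, so $\|b\|_{\cC^{1/2}_4}\lesssim (r-s)^{-1/8}$. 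For $a=g'g(P_{r-s}\widehat u^{\eps_k}_s)$ the one step needing care is that $g'g$ is only locally Lipschitz with linear growth. Indeed $|(g'g)(v)-(g'g)(w)|\lesssim (1+|v|+|w|)|v-w|$ by the bounds on $g,g',g''$. Hence, by Cauchy–Schwarz in $\omega$, $[a]_{\cC^{1/2}_4}\lesssim (1+\|P_{r-s}\widehat u^{\eps_k}_s\|_{\cC^0_8})[P_{r-s}\widehat u^{\eps_k}_s]_{\cC^{1/2}_8}\lesssim (r-s)^{-1/8}$. Again Theorem \ref{lem:niform_bound_1/4} is used with $p=8$, together with $\|a\|_{\cC^0_4}\lesssim 1$.

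Altogether $\|f_r\|_{\cC^{1/2}_2}\lesssim (r-s)^{-1/4}$ for $m=3$. This still gives $\int_s^t\|f_r\|^2_{\cC^{1/2}_2}dr\lesssim\int_s^t(r-s)^{-1/2}dr\lesssim1$. Lemma \ref{lem:basic_estimate_K} then yields $\|\widehat{\mathbb{K}}^{\eps_k,3}_{s,t}\|_{L_2}\lesssim\eps_k^{1/4}$. The main obstacle is checking that the accumulated singularities in $r-s$ stay integrable after squaring: $(r-s)^{-1/4}$ for the product, which is fine. I would also verify that predictability and the hypotheses of Lemma \ref{lem:basic_estimate_K} hold for these frozen integrands. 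This is clear, since they are $\widehat{\mathcal G}^{\eps_k}_s$-measurable and continuous in $r>s$.
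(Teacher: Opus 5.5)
Your proposal is correct and follows the paper's proof essentially step for step. Both apply Lemma \ref{lem:basic_estimate_K} with $\alpha=1/2$ and use \eqref{eq:HK_omega_holder} to lift each frozen factor from $\cC^{1/4}$ to $\cC^{1/2}$ at a cost of $(r-s)^{-1/8}$, with the bounds from Theorem \ref{lem:niform_bound_1/4} and \eqref{eq:J-C1/4}, ending in the integrable singularity $(r-s)^{-1/2}$. The differences are cosmetic: the paper first passes to the original probability space via equality in law rather than working on $\widehat\Omega$, and your handling of the locally Lipschitz $g'g$ through $\cC_8$ norms is slightly more careful than the paper's written chain of inequalities.
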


\begin{proof}
    We consider the case $m=3$ first. Since $(\widehat{u}^{\eps_k},\widehat{w}^{\eps_k})\stackrel{\mathrm{law}}{=}(u^{\eps_k},w)$, we have 
    \begin{equs}
        \|\widehat{\mathbb{K}}^{\eps_k,3}_{s,t}\|_{L_2}= \|K^{\eps_k}_{s,t}\big[ \phi  g'g(P_{\cdot-s}u^{\eps_k}_s)\, \big(P_{\cdot-s}J^{\eps_k}_{0,s}[1]\big) \big]\|_{L_2}^2. 
    \end{equs}
   Next, by  Lemma \ref{lem:basic_estimate_K} with $\alpha=1/2$, we have 
    \begin{equs}
        \|K^\eps_{s,t}\big[ \phi  g'g(P_{\cdot-s}u^\eps_s)\, &\big(P_{\cdot-s}J^\eps_{0,s}[1]\big) \big]\|_{L_2}^2
        \\
        &\lesssim \eps^{1/2} \int_s^t \| \phi  g'g(P_{r-s}u^\eps_s)\, \big(P_{r-s}J^\eps_{0,s}[1]\big)\|^2_{\mathcal{C}^{1/2}_2}\, dr 
        \\
        &\lesssim \eps^{1/2} \int_s^t \| \phi\|_{C^{1/2}}^2 \| g'g(P_{r-s}u^\eps_s)\|_{\mathcal{C}^{1/2}_4} \|P_{r-s}J^\eps_{0,s}[1]\|_{\mathcal{C}^{1/2}_4}^2\, dr 
        \\
          &\lesssim \eps^{1/2} \int_s^t (1+ \|P_{r-s}u^\eps_s\|_{\mathcal{C}^{1/2}_4}^2) \|P_{r-s}J^\eps_{0,s}[1]\|_{\mathcal{C}^{1/2}_4}^2\, dr
          \\
            &\stackrel{\eqref{eq:HK_omega_holder}}{\lesssim} \eps^{1/2} \int_s^t (1+ (r-s)^{-1/4}\|u^\eps_s\|_{\mathcal{C}^{1/4}_4}^2)  (r-s)^{-1/4}\|J^\eps_{0,s}[1]\|_{\mathcal{C}^{1/4}_4}^2\, dr.
    \end{equs}
    By Theorem \ref{lem:niform_bound_1/4} we have $\|u^\eps_s\|_{\mathcal{C}^{1/4}_4}\lesssim 1 $ while by \eqref{eq:J-C1/4} (applied with $f\equiv1$) we also have $\|J^\eps_{0,s}[1]\|_{\mathcal{C}^{1/4}_4}\lesssim 1 $. Consequently, 
    \begin{equs}
         \|K^\eps_{s,t}\big[ \phi  g'g(P_{\cdot-s}u^\eps_s)\, &\big(P_{\cdot-s}J^\eps_{0,s}[1]\big) \big]\|_{L_2}^2 \lesssim \eps^{1/2},
    \end{equs}
   from which the claim follows.  The case $m=2$ follows in a similar but easier manner since 
   \begin{equs}
         \|\widehat{\mathbb{K}}^{\eps_k,2}_{s,t}\|_{L_2}= \|K^{\eps_k}_{s,t}[\phi \, g(P_{\cdot-s}u^\eps_s)]\|_{L_2}.
   \end{equs}

\end{proof}
Next, we want an estimate for the term 
\begin{equs}
   \|\widehat{\mathbb{K}}^{\eps_k,4}_{s,t}\|_{L_2}  =\big\| K^{\eps_k}_{s,t}\big[ \phi  g'(P_{\cdot-s}u^{\eps_k}_s)\big(J^{\eps_k}_{s,\cdot}[g(P_{\diamond-s}u^{\eps_k}_s)] -g(P_{\cdot-s}u^{\eps_k}_s)J^{\eps_k}_{s,\cdot}[1]\big)   \big]\big\|_{L_2}
\end{equs}
For this, we first derive an estimate for $J^\eps_{s,r}[g(P_{\diamond-s}u^\eps_s)] -g(P_{r-s}u^\eps_s)J^\eps_{s,r}[1]$, which we further decompose as
\begin{equs}
    J^\eps_{s,r}[g(P_{\diamond-s}u^\eps_s)] -g(P_{r-s}u^\eps_s)J^\eps_{s,r}[1]&= J^\eps_{s,r}[g(P_{r-s}u^\eps_s)] -g(P_{r-s}u^\eps_s)J^\eps_{s,r}[1]
    \\
    & \qquad + J^\eps_{s,r}[g(P_{\diamond-s}u^\eps_s)-g(P_{r-s}u^\eps_s)].\label{eq:decomposition123}
\end{equs}
\begin{lemma} \label{lem:J_frozen_space}
    There exists a constant $N=N(p, C_g, \|\psi\|_{C^{1/4}})$ such that for all $\eps>0$ and $(s,r) \in [0,1]^2_{\leq}$ we have 
    \begin{equs}
        \|J^\eps_{s,r}[g(P_{r-s}u^\eps_s)] -g(P_{r-s}u^\eps_s)J^\eps_{s,r}[1]\|_{\mathcal{C}^{1/4}_p} \leq N |r-s|^{1/8}.\label{eq:aaa}
    \end{equs}
\end{lemma}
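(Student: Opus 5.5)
The plan is to exploit that $h:=g(P_{r-s}u^\eps_s)$ is frozen in time, i.e.\ $\mathcal{F}_s$-measurable, while the noise in $J^\eps_{s,r}$ only acts on $[s,r]$. Write
\begin{equ}
D(x):=J^\eps_{s,r}[h](x)-h(x)J^\eps_{s,r}[1](x)=\eps^{3/4}\int_s^r\int_\T p_{r-\tau}(x-y)\big(h(y)-h(x)\big)\nabla\xi_\eps(dy,d\tau).
\end{equ}
For fixed $x$ this is a stochastic integral against an $\mathcal{F}_s$-measurable integrand that vanishes at $y=x$. I would use two regularity inputs on $h$, both coming from Theorem \ref{lem:niform_bound_1/4}, \eqref{eq:HK_omega_holder} and the Lipschitz property of $g$:
\begin{equ}
\|h\|_{\cC^{1/4}_p}\lesssim 1,\qquad \|h\|_{\cC^{1/2}_p}\lesssim (r-s)^{-1/8}.
\end{equ}

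\textbf{Step 1: the $\cC^0_p$ bound.} I would repeat the proof of Lemma \ref{lem:J_in_Lp} with $f_\tau(y)=h(y)-h(x)$, applying BDG and splitting into $A_1$ and $A_2$ by smuggling in $f(z)=h(z)-h(x)$.

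The $A_2$-part is exactly as before. It uses only $[h]_{\cC^{1/4}_p}$ and gives $\int_s^r(r-\tau)^{-1/2}\,d\tau\lesssim (r-s)^{1/2}$.

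The $A_1$-part is the new point: the factor $\|h(z)-h(x)\|_{L_p}^2\lesssim|z-x|^{2\gamma}$ now brings a spatial weight. By \eqref{eq:HK-power} it becomes
\begin{equ}
\eps^{3/2}\int_0^{r-s}(\eps^2+v)^{-3/2+\gamma}\,dv.
\end{equ}
With $\gamma=1/4$ this is $\eps^{3/2}\int_0^{r-s}(\eps^2+v)^{-5/4}\,dv$. Using $(\eps^2+v)^{-5/4}\le \eps^{-3/2}v^{-1/2}$ gives a bound $\lesssim (r-s)^{1/2}$. Choosing the exponent split more carefully, or using the $\cC^{1/2}$ bound on $h$, only improves this.

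So $\|D\|_{\cC^0_p}\lesssim (r-s)^{1/4}$, uniformly in $\eps$. This is better than needed and leaves room for Step 2.

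\textbf{Step 2: the Hölder seminorm.} For $x,x'\in\T$ I would decompose
\begin{equ}
D(x)-D(x')=\eps^{3/4}\int_s^r\!\!\int_\T G_{r-\tau}(x,x',y)\big(h(y)-h(x)\big)\nabla\xi_\eps(dy,d\tau)+\big(h(x')-h(x)\big)J^\eps_{s,r}[1](x').
\end{equ}
The second term is handled by independence of $\mathcal{F}_s$ and the noise on $[s,r]$: its $L_p$ norm factorises as
\begin{equ}
\|h(x')-h(x)\|_{L_p}\,\|J^\eps_{s,r}[1]\|_{\cC^0_p}\lesssim |x-x'|^{1/4}(r-s)^{1/8},
\end{equ}
where the last factor comes from \eqref{eq:J-C0} with $f\equiv1$.

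The first term is treated like the $\alpha>0$ case of Lemma \ref{lem:J_in_Lp}, with the extra weight $|h(z)-h(x)|$. The $A_2$-type piece gives $|x-x'|^{1/2}(r-s)^{1/4}$ directly from \eqref{eq:HK_alpha}. For the $A_1$-type piece I first reduce to $|x-x'|\le (r-s)^{1/2}$: otherwise Step 1 already gives $\|D(x)-D(x')\|_{L_p}\lesssim(r-s)^{1/4}\le|x-x'|^{1/4}(r-s)^{1/8}$.

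\textbf{Main obstacle.} In the regime $|x-x'|\le(r-s)^{1/2}$ I must bound
\begin{equ}
\eps^{3/2}\int_0^{r-s}\!\!\int_\T|\nabla_z G_{\eps^2+v}(x,x',z)|^2\,\|h(z)-h(x)\|_{L_p}^2\,dz\,dv
\end{equ}
by $|x-x'|^{1/2}(r-s)^{1/4}$, uniformly in $\eps$. I would first try interpolating the two Hölder bounds on $h$, using $\cC^{1/2}$ (which costs $(r-s)^{-1/4}$ after squaring) against a weighted version of \eqref{eq:HK_alpha}. The weighted estimate I need is
\begin{equ}
\int|\nabla G_t(x,x',z)|^2|z-x|^{2\gamma}\,dz\lesssim|x-x'|^{2\alpha}t^{-\alpha-3/2+\gamma},
\end{equ}
which follows from \eqref{eq:HK-power} together with the mean value theorem. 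The exponents $\alpha,\gamma$ are then tuned, combined with the same $\eps$-absorption trick $(\eps^2+v)^{-a}\le\eps^{-3/2}v^{3/4-a}$ as in Step 1, so that the $\eps^{3/2}$ prefactor is exactly compensated and the time integral is convergent.
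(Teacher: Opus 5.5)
Your argument is correct, and it takes a genuinely different and lighter route than the paper's.

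\textbf{The paper's route.} The paper never splits off the commutator $(h(x')-h(x))J^\eps_{s,r}[1](x')$. Instead it Taylor-expands $f(y)-f(x)$ and uses $z p_t(z)\propto t\nabla p_t(z)$ to extract an explicit factor $(r-\tau)$ from the kernel. It then decomposes into four pieces $C_1,\dots,C_4$. Their bounds need $\|\nabla f\|_{\mathcal{C}^0_p}\lesssim (r-s)^{-3/8}$, $[\nabla f]_{\mathcal{C}^{1/4}_p}\lesssim (r-s)^{-1/2}$ and $\|\nabla^2 f\|_{\mathcal{C}^0_p}\lesssim (r-s)^{-11/8}$. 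So the smoothing by $P_{r-s}$ and the boundedness of $g''$ are used essentially, with the negative powers of $r-s$ paid for by $(r-\tau)^2$.

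\textbf{What your route buys.} You use only $\mathcal{F}_s$-measurability and the $\mathcal{C}^{1/4}_p$ norm of the frozen integrand. The commutator is handled by independence and \eqref{eq:J-C0}. Everything else is the $A_1/A_2$ splitting of Lemma~\ref{lem:J_in_Lp}, with one new input: the vanishing of $h(\cdot)-h(x)$ at $x$ puts the weight $|z-x|^{1/2}$ into the $A_1$ term. This gains $t^{1/4}$ and turns the critical $(\eps^2+v)^{-7/4}$ into $(\eps^2+v)^{-3/2}$, so the $\eps^{3/2}$ absorption now produces a factor $(r-s)^{1/4}$. Consequently your proof gives the estimate for any $\mathcal{F}_s$-measurable integrand bounded in $\mathcal{C}^{1/4}_p$, without $P_{r-s}$-smoothing and without $g''$. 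Your Step~1 also handles the $\mathcal{C}^0_p$ part explicitly, which the paper's proof leaves implicit.

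\textbf{One correction.} Your weighted kernel bound $\int|\nabla G_t(x,x',z)|^2|z-x|^{2\gamma}\,dz\lesssim|x-x'|^{2\alpha}t^{-\alpha-3/2+\gamma}$ holds only for $\gamma\le\alpha$. If $\gamma>\alpha$ and $\eps^2+v<|x-x'|^2$, the mean value theorem is unavailable. In that regime the part of $\nabla G_t$ concentrated at $x'$ contributes $|x-x'|^{2\gamma}t^{-3/2}$, not $t^{\gamma-3/2}$.

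The simplest admissible choice is $\alpha=\gamma=1/4$, which closes immediately:
\begin{equ}
\eps^{3/2}|x-x'|^{1/2}\int_0^{r-s}(\eps^2+v)^{-3/2}\,dv\le |x-x'|^{1/2}\int_0^{r-s}v^{-3/4}\,dv.
\end{equ}
With this choice you need neither the $\mathcal{C}^{1/2}$ bound on $h$ nor the reduction to $|x-x'|\le (r-s)^{1/2}$. The choice $\alpha=\gamma=1/2$ together with the $\mathcal{C}^{1/2}$ bound also works. It gives a bound of order $|x-x'|$, and then it does rely on that reduction.
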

    \begin{proof}
   Let us fix $(s,r) \in [0,1]^2_{\leq}$.  To ease the notation, let us set $f:=g(P_{r-s}u^\eps_s)$.  For arbitrary $x,x' \in \mathbb{T}$, we have 
\begin{equs}
   J^\eps_{s,r}&[f](x) -f(x) J^\eps_{s,r}[1](x)-J^\eps_{s,r}[f](x') +f(x')J^\eps_{s,r}[1](x')
  \\&= \eps^{3/4}\int_s^r \int_\mathbb{T} F_{r,\tau}(x,x',y) \, \nabla \xi_\eps(dy, d\tau)
  \\
&=\eps^{3/4}\int_s^r\int_\T\int_\T F_{r,\tau}(x,x',y) \nabla p_{\eps^2}(y-z) \xi(dz, d\tau),
\end{equs}
where 
\begin{equs}
    F_{s,r,\tau}(x,x',y):= p_{r-\tau}(y-x)\big(f(y)-f(x)\big) -p_{r-\tau}(y-x')\big(f(y)-f(x')\big).
\end{equs}
By the fundamental theorem of calculus
\begin{equ}
    f(y)-f(x)=(x-y)
    D_f(y,x):=(x-y)  \int_0^1 \nabla f (\theta y+(1-\theta)x) \, d \theta.
\end{equ}
Therefore, using the fact that $zp_{t} (z) =t\nabla p_{t} (z)$, we see that 
 \begin{equs}
      F_{s,r,\tau}(x,x',y)& = (r-\tau) \nabla p_{r-\tau}(y-x) D_f(y,x)-(r-\tau) \nabla p_{r-\tau}(y-x') D_f(y,x').
 \end{equs}
 Consider a point $z\in\T$ and further decompose as
 \begin{equs}
     F_{r,\tau}(x,x',y)&  =(r-\tau) \Big(\big( \nabla p_{r-\tau}(y-x)-\nabla p_{r-\tau}(y-x')\Big)  D_f(z,x)
     \\
     & \qquad \qquad+  \big( \nabla p_{r-\tau}(y-x)-\nabla p_{r-\tau}(y-x')\Big)  \big(D_f(y,x)-D_f(z,x) \big) 
      \\
     & \qquad\qquad- \nabla p_{r-\tau}(y-x') \big(D_f(z,x')-D_f(z,x)\big)
     \\
     &\qquad\qquad-  \nabla p_{r-\tau}(y-x') \big(D_f(y,x')-D_f(y,x)-D_f(z,x')+D_f(z,x)\big)\Big)
     \\
     &=: (r-\tau)\sum_{i=1}^4F^{i}_{s,r,\tau}(x,x',y,z).
 \end{equs}
 Consequently, by the BDG and Minkowski's inequality, we have 
 \begin{equs}
     \| J^\eps_{s,r}&[f](x) -f (x)J^\eps_{s,r}[1](x)-J^\eps_{s,r}[f](x') +f(x')J^\eps_{s,r}[1](x')\|_{L_p}^2 
     \\
    & \lesssim \eps^{3/2} \sum_{i=1}^4 \int_s^r (r-\tau)^2\Big\| \int_{\mathbb{T}} \Big( \int_\mathbb{T}F^{i}_{s,r,\tau}(x,x',y,z) \nabla p_{\eps^2}(y-z) \, dy\Big)^2 \, dz d\tau \Big\|_{L_{p/2}}
    \\
    & =: \eps^{3/2} \sum_{i=1}^4\int_s^r(r-\tau)^2 C_i(s,\tau,r,x,x')\,d\tau  .\label{eq:bound_by_Cis}
 \end{equs}
 To prove the claim, we are therefore left to show
 \begin{equ}\label{eq:goal-C}
     C_i(\tau,r,x,x')\lesssim (r-\tau)^{-11/4}\eps^{-3/2}|x-x'|^{1/2}
 \end{equ}
 for each $i=1,2,3,4$. Indeed, using \eqref{eq:goal-C} in \eqref{eq:bound_by_Cis}, integrating in $\tau$, taking square root, dividing by $|x-x'|^{1/4}$, and taking supremum over $x\neq x'\in\T$, we arrive at the claimed bound \eqref{eq:aaa}.

 For $C_1$, first note that by integration by parts and the semigroup property,  we have
 \begin{equs}
 \int_\mathbb{T}&F^{1}_{r,\tau}(x,x',y,z) \nabla p_{\eps^2}(y-z) \, dy    
     =\big(\Delta p_{\eps^2+r-\tau}p(z-x)- \Delta p_{\eps^2+r-\tau}p(z-x')\big)  D_f(z,x).
 \end{equs}
 By this and Minkowski's inequality, it follows that 
 \begin{equs}
     C_1(s,\tau,r,x,x') & \lesssim  \int_{\mathbb{T}} \big(\Delta p_{\eps^2+r-\tau}p(z-x)- \Delta p_{\eps^2+r-\tau}p(z-x')\big)^2  \|D_f(z,x)\|_{L_p}^2\, dz 
     \\
     & \lesssim \|\Delta p_{\eps^2+r-\tau}p(\cdot-x)- \Delta p_{\eps^2+r-\tau}p(\cdot-x')\|^2_{L_2(\mathbb{T})}  \|\nabla f\|_{\mathcal{C}^0_p}^2
     \\ & \stackrel{\eqref{eq:HK_alpha}}{\lesssim} |x-x'|^{1/2}  (\eps^2+r-\tau)^{-11/4} \|\nabla f\|_{\mathcal{C}^0_p}^2.
 \end{equs}
  Further, since $g$ has bounded derivative,  we have that 
 \begin{equs}
     \|\nabla f\|_{\mathcal{C}^0_p}^2=\|\nabla \big( g(P_{r-s}u^\eps)\big)\|_{\mathcal{C}^0_p}^2\lesssim \|\nabla P_{r-s}u^\eps_s\|_{\mathcal{C}^0_p}^2 \stackrel{\eqref{eq:HK_omega_deriv}}{\lesssim} (r-s)^{-3/4} \|u^\eps_s\|_{\mathcal{C}^{1/4}_p}^2\lesssim (r-s)^{-3/4},\label{eq:nablafC0}
 \end{equs}
 where we have used Theorem \ref{lem:niform_bound_1/4} in the last inequality.
 The above two bounds, together with the elementary inequality
   $ (\eps^2+r-\tau)^{-11/4}\leq \eps^{-3/2} (r-\tau)^{-2}$ yield \eqref{eq:goal-C} for $i=1$.
 
  We proceed with $C_2$.  By the fact that $\nabla p_{\eps^2}(y-z)= \eps^{-2} (y-z)p^{1/2}_{\eps^2}(y-z)p_{\eps^2}^{1/2}(y-z)$ and the Cauchy-Schwarz inequality, we have  
   \begin{equs}
   \,      \Big( &\int_\mathbb{T}F^{2}_{r,\tau}(x,x',y,z) \nabla p_{\eps^2}(y-z) \, dy\Big)^2
        \\
        &=  \, \eps^{-4} \Big( \int_\mathbb{T}  \big( \nabla p_{r-\tau}(y-x)-\nabla p_{r-\tau}(y-x')\big)p^{1/2}_{\eps^2}(y-z) 
        \\
        &\qquad\qquad\qquad\times \big(D_f(y,x)-D_f(z,x) \big) (y-z)p^{1/2}_{\eps^2}(y-z) \, dy\Big)^2
        \\
        &\leq  \,  \eps^{-4}P_{\eps^2}| \nabla p_{r-\tau}(\cdot-x)-\nabla p_{r-\tau}(\cdot-x')|^2(z) \int_{\mathbb{T}}|D_f(y,x)-D_f(z,x)|^2|y-z|^2p_{\eps^2}(y-z) \, dy.
   \end{equs}
   Hence, by Minkowski's inequality we get 
   \begin{equs}
       C_2(s,\tau,r,x,x') & \leq \eps^{-4} \int_{\mathbb{T}} P_{\eps^2}| \nabla p_{r-\tau}(\cdot-x)-\nabla p_{r-\tau}(\cdot-x')|^2(z)
       \\
       & \qquad\qquad \times \int_{\mathbb{T}}\|D_f(y,x)-D_f(z,x)\|_{L_p}^2|y-z|^2p_{\eps^2}(y-z) \, dy  dz 
       \\
        & \lesssim \eps^{-4}  \int_{\mathbb{T}} P_{\eps^2}| \nabla p_{r-\tau}(\cdot-x)-\nabla p_{r-\tau}(\cdot-x')|^2(z)
       \\
       & \qquad\qquad \times [\nabla f]_{\mathcal{C}^{1/4}_p}^2\int_{\mathbb{T}}|y-z|^{5/2}p_{\eps^2}(y-z) \, dy  dz
       \\
          & \stackrel{\eqref{eq:HK-power}}{\lesssim} \eps^{-3/2}[\nabla f]_{\mathcal{C}^{1/4}_p}^2  \| \nabla p_{r-\tau}(\cdot-x)-\nabla p_{r-\tau}(\cdot-x')\|_{L_2(\mathbb{T})}^2 
           \\
           & \stackrel{\eqref{eq:HK_alpha}}{\lesssim}  \eps^{-3/2}|x-x'|^{1/2}[\nabla f]_{\mathcal{C}^{1/4}_p}^2 |r-s|^{-7/4}\label{eq:C2_first}.
   \end{equs}
    Then, since $g$ has bounded first and second derivatives, we have
   \begin{equs}
      \,  [\nabla f]_{\mathcal{C}^{1/4}_p}=  [\nabla \big(g(P_{r-s}u^\eps_s)\big)]_{\mathcal{C}^{1/4}_p} &\lesssim  [\nabla P_{r-s}u^\eps_s]_{\mathcal{C}^{1/4}_p}+  \|\nabla P_{r-s}u^\eps_s\|_{\mathcal{C}^{0}_p} [P_{r-s}u^\eps_s]_{\mathcal{C}^{1/4}_p}
      \\
      & \stackrel{\eqref{eq:HK_omega_deriv}, \eqref{eq:HK_omega_holder}}{\lesssim}  [\nabla P_{r-s}u^\eps_s]_{\mathcal{C}^{1/4}_p}+ |r-s|^{-3/8}\|u^\eps_s\|_{\mathcal{C}^{1/4}_p}^2,
   \end{equs}
   Moreover, we have 
   \begin{equs}
      \,   [\nabla P_{r-s}u^\eps_s]_{\mathcal{C}^{1/4}_p}=  [ P_{\frac{r-s}{2}}\nabla  P_{\frac{r-s}{2}}u^\eps_s]_{\mathcal{C}^{1/4}_p}\stackrel{\eqref{eq:HK_omega_holder}}{\lesssim} (r-s)^{-1/8}\|\nabla  P_{\frac{r-s}{2}}u^\eps_s\|_{\mathcal{C}^{0}_p}\stackrel{\eqref{eq:HK_omega_deriv}}{\lesssim} (r-s)^{-1/2}\|u^\eps_s\|_{\mathcal{C}^{1/4}_p}.
   \end{equs}
   Combining this with the above and using Theorem \ref{lem:niform_bound_1/4}, we obtain that 
   \begin{equs} \label{eq:est_nabla_f-quarter}
        [\nabla f]_{\mathcal{C}^{1/4}_p}\lesssim |r-s|^{-1/2},
   \end{equs} 
   and inserting this in \eqref{eq:C2_first}, we obtain the desired bound \eqref{eq:goal-C} for $i=2$.
   
   Moving to $C_3$, by integration by parts and the semigroup property, we have 
   \begin{equs}
   \int_\mathbb{T}F^{3}_{r,\tau}(x,x',y,z) \nabla p_{\eps^2}(y-z) \, dy
   =  \big(\Delta p_{\eps^2+r-\tau}(z-x')\big) (D_f(z,x')-D_f(z,x)).
 \end{equs}
Hence, by Minkowski's inequality we get 
\begin{equs}
    C_3(s,\tau,r,x,x') & \lesssim  \int_{\mathbb{T}} |\Delta p_{\eps^2+r-\tau}(z-x')|^2 \|D_f(z,x')-D_f(z,x)\|_{L_p}^2\, dz 
    \\
    & \lesssim |x-x'|^{1/2}\|\Delta p_{\eps^2+r-\tau}\|_{L_2(\mathbb{T})}^2 [\nabla f]_{\mathcal{C}^{1/4}_p}^2 
    \\
    & \stackrel{\eqref{eq:HK_0}}{\lesssim}  |x-x'|^{1/2}(\eps^2+r-\tau)^{-5/2}[\nabla f]_{\mathcal{C}^{1/4}_p}^2 .
\end{equs}
Using \eqref{eq:est_nabla_f-quarter} and the elementary inequality $(\eps^2+r-\tau)^{-5/2}\leq \eps^{-3/2}(r-\tau)^{-7/4}$, we get the desired bound \eqref{eq:goal-C} for $i=3$.

Finally, we move to $C_4$.  By the fact that $\nabla p_{\eps^2}(y-z)= \eps^{-2} (y-z)p^{1/2}_{\eps^2}(y-z)p_{\eps^2}^{1/2}(y-z)$ and the Cauchy-Schwarz inequality, we have  
    \begin{equs}
   \,       \Big( &\int_\mathbb{T}F^{4}_{r,\tau}(x,x',y,z) \nabla p_{\eps^2}(y-z) \, dy\Big)^2
        \\
        &=  \,  \eps^{-4}\Big( \int_\mathbb{T}  \nabla p_{r-\tau}(y-x') p_{\eps^2}^{1/2}(y-z)
        \\
        &\qquad\qquad\big(D_f(y,x')-D_f(y,x)-D_f(z,x')+D_f(z,x) \big) (y-z)p_{\eps^2}^{1/2}(y-z) \, dy\Big)^2
        \\
        &\lesssim  \,  \eps^{-4} P_{\eps^2}| \nabla p_{r-\tau}(\cdot-x')|^2(z) 
        \\
        & \qquad \times \int_{\mathbb{T}} \big(D_f(y,x')-D_f(y,x)-D_f(z,x')+D_f(z,x)\big)^2|y-z|^2p_{\eps^2}(y-z) \, dy.
   \end{equs}
   Hence, by Minkowski's inequality we get 
   \begin{equs}
       C_4(s,\tau,r,x,x') &\lesssim  \eps^{-4} \int_{\mathbb{T}} P_{\eps^2}| \nabla p_{r-\tau}(\cdot-x')|^2(z) \label{eq:est_C4_1}
        \\
        & \quad \times \int_{\mathbb{T}} \|D_f(y,x')-D_f(y,x)-D_f(z,x')+D_f(z,x)\|_{L_p}^2|y-z|^2p_{\eps^2}(y-z) \, dy dz  .      
   \end{equs}
   To estimate the four point difference, first note that trivially
\begin{equs}
          \delta_f(x,x',y,z):=\|D_f(y,x')-D_f(y,x)-D_f(z,x')+D_f(z,x)\|_{L_p}\lesssim \|\nabla f\|_{\mathcal{C}^0_p}.\label{eq:4-point-triv}
     \end{equs}
    Grouping the terms one way, one gets
    \begin{equ}
        \delta_f(x,x',y,z)\leq
        \|D_f(y,x')-D_f(y,x)\|_{L_p}+\|D_f(z,x')+D_f(z,x)\|_{L_p}\lesssim\|\nabla^2 f\|_{\mathcal{C}^0_p}|x-x'|.\label{eq:4-point-xx'}
    \end{equ}
    Similarly, but with a different grouping of terms, one has
\begin{equ}
        \delta_f(x,x',y,z)\leq
        \|D_f(y,x')-D_f(z,x')\|_{L_p}+\|D_f(y,x)+D_f(z,x)\|_{L_p}\lesssim\|\nabla^2 f\|_{\mathcal{C}^0_p}|y-z|.\label{eq:4-point-yz}
    \end{equ}
Raising \eqref{eq:4-point-triv} to the power $1/2$ and both \eqref{eq:4-point-xx'} and \eqref{eq:4-point-yz} to the power $1/4$ we get
\begin{equ}\label{eq:est_4_point_nabla3_nabla}
     \delta_f(x,x',y,z)\lesssim |x-x'|^{1/4}|y-z|^{1/4}\|\nabla f\|_{\mathcal{C}^0_p}^{1/2} \|\nabla^2 f\|_{\mathcal{C}^0_p}^{1/2}.
\end{equ}
The norm $\|\nabla f\|_{\mathcal{C}^0_p}$ is already bounded above in \eqref{eq:nablafC0}.
To bound $\|\nabla^2 f\|_{\mathcal{C}^0_p}$, we proceed similarly, this time using two bounded derivatives from $g$:
  \begin{equs}
      \| \nabla^2 f\|_{\mathcal{C}^0_p}  =   \| \nabla^2 \big(g(P_{r-s}u^\eps_s))\|_{\mathcal{C}^0_p}
            & \lesssim  \| \nabla P_{r-s}u^\eps_s\|_{\mathcal{C}^0_{2p}}^2+ \| \nabla^2 P_{r-s}u^\eps_s\|_{\mathcal{C}^0_p}
      \\
      & \stackrel{\eqref{eq:HK_omega_deriv}}{\lesssim} (r-s)^{-11/8}\|u^\eps_s\|_{\mathcal{C}^{1/4}_p}\lesssim (r-s)^{-11/8}, \label{eq:est_nabla_3_f}
  \end{equs}
  where we have used Theorem \ref{lem:niform_bound_1/4} for the last inequality.
  Inserting \eqref{eq:nablafC0} and \eqref{eq:est_nabla_3_f} in \eqref{eq:est_4_point_nabla3_nabla}, we get 
  \begin{equs}
      \delta_f(x,x',y,z)\lesssim |x'-x|^{1/4}|y-z|^{1/4} (r-s)^{-5/8}.
  \end{equs}
  By replacing the above in \eqref{eq:est_C4_1}, we get 
  \begin{equs}
      C_4(s,\tau,r,x,x')&\lesssim   \eps^{-4} |x'-x|^{1/2}(r-s)^{-5/4} \int_{\mathbb{T}}  P_{\eps^2}| \nabla p_{r-\tau}(\cdot-x')|^2(z) 
         \int_{\mathbb{T}}|y-z|^{5/2} p_{\eps^2}(y-z) \, dy dz 
        \\
        & \stackrel{\eqref{eq:HK-power}}{\lesssim}  \eps^{-3/2} |x'-x|^{1/2}(r-s)^{-5/4} \| \nabla p_{r-\tau}\|_{L_2(\mathbb{T})}^2 
        \\
        & \stackrel{\eqref{eq:HK_0}}{\lesssim}  \eps^{-3/2} |x'-x|^{1/2} (r-s)^{-5/4}(r-\tau)^{-3/2}.
  \end{equs}
 This finishes the proof. 
\end{proof}

 \begin{lemma}       \label{lem:estimate_J_frozen_space_time}
    There exists a constant $N=N(p, C_g, \|\psi\|_{C^{1/4}})$ such that for all $\eps>0$ and $(s,r) \in [0,1]^2_{\leq}$ we have 
    \begin{equs}
        \|J^\eps_{s,r}[g(P_{\diamond-s}u^\eps_s)-g(P_{r-s}u^\eps_s)]\|_{\mathcal{C}^{1/4}_p}\leq N |r-s|^{1/8}.
    \end{equs}
\end{lemma}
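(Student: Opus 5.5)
The integrand here is $h_\tau:=g(P_{\tau-s}u^\eps_s)-g(P_{r-s}u^\eps_s)$, $\tau\in[s,r]$, which is $\mathcal{F}_s$-measurable. The idea is to apply Lemma \ref{lem:J_in_Lp} with $\alpha=1/4$ directly to $f=h$ on $[s,r]$ and show that $h$ is small in both of the norms appearing on the right-hand side of \eqref{eq:main_estimate_J}. Since $h$ is deterministic given $\mathcal{F}_s$, nothing stochastic beyond Lemma \ref{lem:J_in_Lp} is needed. The estimate then reduces to heat kernel bounds on the difference $P_{\tau-s}u^\eps_s-P_{r-s}u^\eps_s$, combined with the uniform bound $\|u^\eps_s\|_{\mathcal{C}^{1/4}_p}\lesssim1$ of Theorem \ref{lem:niform_bound_1/4}.

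\textbf{Step 1: sup-norm bound.} Write $P_{r-s}u^\eps_s-P_{\tau-s}u^\eps_s=(P_{r-\tau}-I)P_{\tau-s}u^\eps_s$. The standard bound $\|(P_h-I)\varphi\|_{\mathcal{C}^0_p}\lesssim h^{1/8}\|\varphi\|_{\mathcal{C}^{1/4}_p}$ follows from Minkowski and $\int|y|^{1/4}p_h(y)dy\lesssim h^{1/8}$. Combined with \eqref{eq:HK_omega_holder} and the Lipschitz property of $g$, it gives
\[
\|h_\tau\|_{\mathcal{C}^0_p}\lesssim (r-\tau)^{1/8}.
\]
Consequently $\sup_{\tau\in[s,\rho]}\|h_\tau\|_{\mathcal{C}^0_p}\lesssim (r-s)^{1/8}$. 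This is not sharp enough when $\rho$ is close to $r$: inserting $(r-s)^{1/4}$ against $\eps^{3/2}\int_s^r(\eps^2+r-\rho)^{-7/4}d\rho\lesssim1$ only yields $(r-s)^{1/8}$ after taking the square root. That happens to be exactly the target, so the first term of \eqref{eq:main_estimate_J} is fine.

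\textbf{Step 2: Hölder seminorm.} The second term needs $\int_s^r(r-\rho)^{-3/4}[h]^2_{\mathcal{C}^{0,1/4}_p([s,\rho])}d\rho\lesssim (r-s)^{1/4}$, which holds as soon as $[h_\tau]_{\mathcal{C}^{1/4}_p}\lesssim 1$ uniformly. To get this, write
\[
g(a)-g(b)=(a-b)\int_0^1 g'(\theta a+(1-\theta)b)\,d\theta ,
\]
and estimate the Hölder seminorm of the product. Here $[P_{\tau-s}u_s-P_{r-s}u_s]_{\mathcal{C}^{1/4}_{2p}}\lesssim\|u_s\|_{\mathcal{C}^{1/4}_{2p}}\lesssim1$ by \eqref{eq:HK_omega_holder} with $\gamma=\alpha$. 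The factor with $g'$ is bounded, with $[g'(\cdots)]_{\mathcal{C}^{1/4}_{2p}}\lesssim1$ because $g''$ is bounded and both arguments lie in $\mathcal{C}^{1/4}_{2p}$. Cauchy–Schwarz in $\omega$ then gives $[h_\tau]_{\mathcal{C}^{1/4}_p}\lesssim1$.

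\textbf{Main obstacle.} The delicate part is the first term of \eqref{eq:main_estimate_J} with $\alpha=1/4$, where the kernel $(\eps^2+r-\rho)^{-7/4}$ concentrates near $\rho=r$. I expect it to be handled by the bound $\eps^{3/2}\int(\eps^2+\cdot)^{-7/4}\lesssim1$ together with $\|h\|_{\mathcal{C}^{0,0}_p([s,r])}\lesssim(r-s)^{1/8}$ from Step 1, as checked above. The remaining care is to keep every exponent uniform in $\eps$. Summing the two contributions and taking square roots yields $N|r-s|^{1/8}$, as claimed.
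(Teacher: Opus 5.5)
Your proposal is correct and follows the paper's proof: apply \eqref{eq:J-C1/4} (i.e.\ Lemma \ref{lem:J_in_Lp} with $\alpha=1/4$), bound $\sup_{\tau\in[s,r]}\|h_\tau\|_{\mathcal{C}^0_p}\lesssim (r-s)^{1/8}$ by heat-kernel time regularity, and show $[h_\tau]_{\mathcal{C}^{1/4}_p}\lesssim 1$ uniformly in $\tau$. The paper writes the time difference as $\int_{\tau-s}^{r-s}\Delta P_\theta u^\eps_s\,d\theta$ rather than using $(P_{r-\tau}-I)$, which is immaterial; your mean-value/product-rule detour in Step 2 is also unnecessary, since the paper simply uses $[h_\tau]_{\mathcal{C}^{1/4}_p}\le\|g(P_{\tau-s}u^\eps_s)\|_{\mathcal{C}^{1/4}_p}+\|g(P_{r-s}u^\eps_s)\|_{\mathcal{C}^{1/4}_p}\lesssim 1$, which needs only that $g$ is Lipschitz.
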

\begin{proof}
    By \eqref{eq:J-C1/4}, we have 
    \begin{equs}
      \,     \|J^\eps_{s,r}[g(P_{\diamond-s}u^\eps_s) & -g(P_{r-s}u^\eps_s)]\|_{\mathcal{C}^{1/4}_p}^2
          \\
         &\lesssim  \,  \,  \|g(P_{\diamond-s}u^\eps_s)-g(P_{r-s}u^\eps_s)\|_{\mathcal{C}^{0,0}_p([s,r])}^2 
         \\
         & \quad + \int_s^r (r-\tau)^{-3/4}[g(P_{\diamond-s}u^\eps_s)-g(P_{r-s}u^\eps_s)]_{\mathcal{C}^{0,1/4}_p([s,\tau])}^2\, d\tau. \label{eq:bad_power_good_power}
    \end{equs}
    For the second summand on the right hand side, we have for any $s'\in[s,\tau]$
    \begin{equs}
      \,   [g(P_{s'-s}u^\eps_s)-g(P_{r-s}u^\eps_s)]_{\mathcal{C}^{1/4}_p}& \leq  \|g(P_{s'-s}u^\eps_s)\|_{\mathcal{C}^{1/4}_p}+\|g(P_{r-s}u^\eps_s)\|_{\mathcal{C}^{1/4}_p}
      \\
      & \lesssim 1+ \|P_{s'-s}u^\eps_s\|_{\mathcal{C}^{1/4}_p}+\|P_{r-s}u^\eps_s\|_{\mathcal{C}^{1/4}_p}
      \\
      & \stackrel{\eqref{eq:HK_omega_holder}}{\lesssim} 1+\|u^\eps\|_{\mathcal{C}^{0,1/4}_p([0,1])} \lesssim 1 \label{eq:similar_estimates}
    \end{equs}
    where we have used  Theorem \ref{lem:niform_bound_1/4} for the last inequality. It follows then that
   \begin{equs}
       \int_s^r (r-\tau)^{-3/4}[g(P_{\diamond-s}u^\eps_s)-g(P_{r-s}u^\eps_s)]_{\mathcal{C}^{0,1/4}_p([s,\tau])}^2\, d\tau \lesssim |r-s|^{1/4}. \label{eq:est_good_power}
   \end{equs} 
   In addition, for any $s'\in [s,r]$ we have 
   \begin{equs}
        \|g(P_{s'-s}u^\eps_s)-g(P_{r-s}u^\eps_s)\|_{\mathcal{C}^0_p} & \lesssim  \|P_{s'-s}u^\eps_s-P_{r-s}u^\eps_s\|_{\mathcal{C}^0_p}
         = \Big\| \int_{s'-s}^{r-s} \Delta P_\theta u^\eps_s \, d \theta \Big\|_{\mathcal{C}^0_p}
        \\
        & \lesssim  \int_{s'-s}^{r-s} \|\Delta P_\theta u^\eps_s\|_{\mathcal{C}^0_p}  \, d \theta
        \stackrel{ \eqref{eq:HK_omega_deriv}}{\lesssim} (r-s')^{1/8} \|u^\eps\|_{\mathcal{C}^{0,1/4}_p([0,1])},
   \end{equs}
 Using this and Theorem \ref{lem:niform_bound_1/4}, it follows that 
   \begin{equs}
   \|g(P_{\diamond-s}u^\eps_s)-g(P_{r-s}u^\eps_s)\|_{\mathcal{C}^{0,0}_p([s,r])}^2 \lesssim |r-s|^{1/4}.
   \end{equs} 
   From this combined with \eqref{eq:est_good_power} and \eqref{eq:bad_power_good_power}, we obtain 
   \begin{equs}
       \|J^\eps_{s,r}[g(P_{\diamond-s}u^\eps_s)] -J^\eps_{s,r}[g(P_{r-s}u^\eps_s)]\|_{\mathcal{C}^{1/4}_p} \lesssim |r-s|^{1/8}.
   \end{equs}
The proof is finished.
\end{proof}

 \begin{lemma}          \label{lem:K4}
   Let $\phi \in C^{1/4}(\T)$.  There exists a constant $N=N( \|\phi\|_{C^{1/4}},C_g, \|\psi\|_{C^{1/4}})$ such that for all $k \in \mathbb{N}_1$ we have 
    \begin{equs}
        \|\widehat{\mathbb{K}}^{\eps_k,4}_{s,t}\|_{L_2}  \leq N |t-s|^{5/8}.
    \end{equs}
\end{lemma}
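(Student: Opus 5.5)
We first pass from the hatted objects to the original ones. Since $(\widehat{u}^{\eps_k},\widehat{w}^{\eps_k})\stackrel{\mathrm{law}}{=}(u^{\eps_k},w)$, the displayed identity preceding \eqref{eq:decomposition123} lets us replace $\widehat{\mathbb{K}}^{\eps_k,4}_{s,t}$ by
\begin{equ}
K^{\eps}_{s,t}\big[\phi\, g'(P_{\cdot-s}u^\eps_s)\, R_{s,\cdot}\big],\qquad R_{s,r}:=J^\eps_{s,r}[g(P_{\diamond-s}u^\eps_s)]-g(P_{r-s}u^\eps_s)J^\eps_{s,r}[1],
\end{equ}
with $\eps=\eps_k$. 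We then apply Lemma \ref{lem:basic_estimate_K} with $\alpha=1/4$. This choice is made because $\eps^{2\alpha-1/2}=1$, so no negative power of $\eps$ appears. It gives
\begin{equ}
\|\widehat{\mathbb{K}}^{\eps_k,4}_{s,t}\|_{L_2}^2\lesssim \int_s^t \|\phi\, g'(P_{r-s}u^\eps_s)\,R_{s,r}\|_{\mathcal{C}^{1/4}_2}^2\,dr .
\end{equ}

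Next we bound the integrand by a product estimate in the $\mathcal{C}^{1/4}$ spaces, using H\"older's inequality in $\omega$:
\begin{equ}
\|\phi\, g'(P_{r-s}u^\eps_s)\,R_{s,r}\|_{\mathcal{C}^{1/4}_2}\lesssim \|\phi\|_{C^{1/4}}\,\|g'(P_{r-s}u^\eps_s)\|_{\mathcal{C}^{1/4}_4}\,\|R_{s,r}\|_{\mathcal{C}^{1/4}_4}.
\end{equ}
The middle factor is handled with the boundedness of $g'$ and $g''$. These give $\|g'(P_{r-s}u^\eps_s)\|_{\mathcal{C}^{1/4}_4}\lesssim 1+\|P_{r-s}u^\eps_s\|_{\mathcal{C}^{1/4}_4}$. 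By \eqref{eq:HK_omega_holder} with $\alpha=\gamma=1/4$ and Theorem \ref{lem:niform_bound_1/4}, this is $\lesssim 1$ uniformly in $r\geq s$.

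For the last factor we use the decomposition \eqref{eq:decomposition123}. Its first part is bounded by Lemma \ref{lem:J_frozen_space} and its second part by Lemma \ref{lem:estimate_J_frozen_space_time}, both applied with $p=4$. Together they give $\|R_{s,r}\|_{\mathcal{C}^{1/4}_4}\lesssim |r-s|^{1/8}$. Substituting, we obtain
\begin{equ}
\|\widehat{\mathbb{K}}^{\eps_k,4}_{s,t}\|_{L_2}^2\lesssim \int_s^t (r-s)^{1/4}\,dr\lesssim |t-s|^{5/4},
\end{equ}
and taking square roots gives the claim with a constant independent of $k$.

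There is no real obstacle here; the hard work has been done in Lemmata \ref{lem:J_frozen_space} and \ref{lem:estimate_J_frozen_space_time}. The only points needing care are two. First, the product rule in $\mathcal{C}^{1/4}_p$ needs a slightly higher integrability exponent on each factor; this is harmless because those lemmata hold for every $p$. Second, the choice $\alpha=1/4$ in Lemma \ref{lem:basic_estimate_K} is exactly what makes the bound uniform in $\eps$. A smaller $\alpha$ would leave a diverging factor $\eps^{2\alpha-1/2}$, and a larger $\alpha$ would require more spatial regularity of $R_{s,r}$ than Lemma \ref{lem:J_frozen_space} provides.
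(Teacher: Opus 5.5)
Your proof is correct and follows the paper's argument step by step: the transfer to the original probability space, Lemma \ref{lem:basic_estimate_K} with $\alpha=1/4$, the product bound into $\mathcal{C}^{1/4}_4$ factors, the bound $\lesssim 1$ on the $g'$ factor, and Lemmata \ref{lem:J_frozen_space} and \ref{lem:estimate_J_frozen_space_time} combined via \eqref{eq:decomposition123}. The only difference is cosmetic: you keep the time integral and integrate $(r-s)^{1/4}$, whereas the paper bounds the integrand by its supremum over $r\in[s,t]$ (working in $\mathcal{C}^{0,1/4}_4([s,t])$) with the uniform bound $|t-s|^{1/8}$; both routes give $|t-s|^{5/4}$ for the squared norm.
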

\begin{proof}
Since $(\widehat{u}^{\eps_k},\widehat{w}^{\eps_k})\stackrel{\mathrm{law}}{=}(u^{\eps_k},w)$, we have 
\begin{equs}
 \|\widehat{\mathbb{K}}^{\eps_k,4}_{s,t}\|_{L_2}= \| K^{\eps_k}_{s,t}\big[ \phi  g'(P_{\cdot-s}u^{\eps_k}_s)\big(J^{\eps_k}_{s,\cdot}[g(P_{\diamond-s}u^{\eps_k}_s)] -g(P_{\cdot-s}u^{\eps_k}_s)J^{\eps_k}_{s,\cdot}[1]\big)   \big]\|_{L_2} 
 \end{equs}
   For any $\eps>0$, applying Lemma \ref{lem:basic_estimate_K} with $\alpha=1/4$ yields 
    \begin{equs}
        \,\| K^{\eps}_{s,t}&\big[ \phi  g'(P_{r-s}u^\eps_s)\big(J^\eps_{s,\cdot}[g(P_{\diamond-s}u^\eps_s)] -g(P_{\cdot-s}u^\eps_s)J^\eps_{s,\cdot}[1]\big)   \big]\|_{L_2}
        \\
        \lesssim & \, \|\phi  g'(P_{\cdot-s}u^\eps_s)\big(J^\eps_{s,\cdot}[g(P_{\diamond-s}u^\eps_s)] -g(P_{\cdot-s}u^\eps_s)J^\eps_{s,\cdot}[1]\big)\|_{\mathcal{C}^{0,1/4}_2([s,t])}|t-s|^{1/2}
        \\
         \lesssim & \, 
         \| g'(P_{\cdot-s}u^\eps_s)\|_{\mathcal{C}^{0,1/4}_4([s,t])} \|J^\eps_{s,\cdot}[g(P_{\diamond-s}u^\eps_s)] -g(P_{\cdot-s}u^\eps_s)J^\eps_{s,\cdot}[1]\|_{\mathcal{C}^{0,1/4}_4([s,t])}|t-s|^{1/2}.\label{eq:K4-bigbound}
    \end{equs}
    Moreover, we have 
    \begin{equs}  \label{eq:bound_g'_14}
         \| g'(P_{\cdot-s}u^\eps_s)\|_{\mathcal{C}^{0,1/4}_4([s,t])}\lesssim 1+  \| P_{\cdot-s}u^\eps_s\|_{\mathcal{C}^{0,1/4}_4([s,t])} \lesssim 1,
    \end{equs}
    where we have used \eqref{eq:HK_omega_holder} and Theorem \ref{lem:niform_bound_1/4}. Finally, by Lemmata \ref{lem:J_frozen_space}, \ref{lem:estimate_J_frozen_space_time}, and the decomposition \eqref{eq:decomposition123}
    we have 
    \begin{equs}
        \|J^\eps_{s,\cdot}[g(P_{\diamond-s}u^\eps_s)] -g(P_{\cdot-s}u^\eps_s)J^\eps_{s,\cdot}[1]\|_{\mathcal{C}^{0,1/4}_4([s,t])}\lesssim |t-s|^{1/8}.
    \end{equs}
    Combining the above estimates proves the claim. 
\end{proof}

 \begin{lemma}              \label{lem:K5}
   Let $\phi \in C^{1/4}(\T)$.  There exists a constant $N=N( \|\phi\|_{C^{1/4}},C_g, \|\psi\|_{C^{1/4}})$ such that for all $k \in \mathbb{N}_1$ and $(s,t) \in [0,1]^2_{\leq}$ we have 
  \begin{equs}
 \|\widehat{\mathbb{K}}^{\eps_k,5}_{s,t}\|_{L_2}   \leq N|t-s|^{5/8}.
\end{equs}
\end{lemma}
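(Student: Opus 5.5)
We follow the pattern of Lemma~\ref{lem:K4}. Since $(\widehat{u}^{\eps_k},\widehat{w}^{\eps_k})\stackrel{\mathrm{law}}{=}(u^{\eps_k},w)$, it suffices to bound, for arbitrary $\eps>0$,
\begin{equ}
\big\|K^\eps_{s,t}\big[\phi\, g'(P_{\cdot-s}u^\eps_s)\,J^\eps_{s,\cdot}[g(u^\eps)-g(P_{\diamond-s}u^\eps_s)]\big]\big\|_{L_2}.
\end{equ}

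\textbf{Step 1 (reduction via Lemma~\ref{lem:basic_estimate_K}).} Apply Lemma~\ref{lem:basic_estimate_K} with $\alpha=1/4$, which gives a factor $|t-s|^{1/2}$ times the $\mathcal{C}^{0,1/4}_2([s,t])$ norm of the integrand. By Hölder's inequality, \eqref{eq:bound_g'_14} and $\phi\in C^{1/4}$, it then remains to show
\begin{equ}
\sup_{r\in[s,t]}\big\|J^\eps_{s,r}[h]\big\|_{\mathcal{C}^{1/4}_4}\lesssim |t-s|^{1/8},\qquad h_\tau:=g(u^\eps_\tau)-g(P_{\tau-s}u^\eps_s).
\end{equ}

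\textbf{Step 2 (apply \eqref{eq:J-C1/4}).} This bound gives
\begin{equ}
\|J^\eps_{s,r}[h]\|_{\mathcal{C}^{1/4}_4}^2\lesssim \|h\|^2_{\mathcal{C}^{0,0}_4([s,r])}+\int_s^r (r-\tau)^{-3/4}[h]^2_{\mathcal{C}^{0,1/4}_4([s,\tau])}\,d\tau .
\end{equ}
For the first term, the Lipschitz property of $g$ and Corollary~\ref{lem:regularity_increments} with $\alpha=0$ give
\begin{equ}
\|h_\tau\|_{\mathcal{C}^0_4}\lesssim\|u^\eps_\tau-P_{\tau-s}u^\eps_s\|_{\mathcal{C}^0_4}\lesssim|\tau-s|^{1/8},
\end{equ}
so this term is $\lesssim |r-s|^{1/4}$. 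For the second term, bound crudely
\begin{equ}
[h_\tau]_{\mathcal{C}^{1/4}_4}\le \|g(u^\eps_\tau)\|_{\mathcal{C}^{1/4}_4}+\|g(P_{\tau-s}u^\eps_s)\|_{\mathcal{C}^{1/4}_4}\lesssim 1,
\end{equ}
using Theorem~\ref{lem:niform_bound_1/4} and \eqref{eq:HK_omega_holder}, exactly as in \eqref{eq:similar_estimates}. Then $\int_s^r(r-\tau)^{-3/4}\,d\tau\lesssim|r-s|^{1/4}$. Together, $\|J^\eps_{s,r}[h]\|_{\mathcal{C}^{1/4}_4}\lesssim|r-s|^{1/8}\le|t-s|^{1/8}$, and combining with Step~1 gives $|t-s|^{5/8}$.

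\textbf{Main obstacle.} The only delicate point is that the first term in \eqref{eq:J-C1/4} is not integrated against a small weight, so the supremum norm of $h$ itself must be small. This is exactly what Corollary~\ref{lem:regularity_increments} provides, with the $L_p$ norm taken inside the suprema. The Hölder seminorm of $h$ needs no smallness, because the weight $(r-\tau)^{-3/4}$ already yields $|r-s|^{1/4}$. No uniformity issue in $\eps$ arises, since all the bounds used are uniform in $\eps$.
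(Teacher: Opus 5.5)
Your proof is correct and follows the paper's argument essentially step for step. You reduce via Lemma \ref{lem:basic_estimate_K} with $\alpha=1/4$ and \eqref{eq:bound_g'_14} to bounding $\|J^\eps_{s,\cdot}[g(u^\eps)-g(P_{\diamond-s}u^\eps_s)]\|_{\mathcal{C}^{0,1/4}_4([s,t])}$, then apply \eqref{eq:J-C1/4}, controlling the sup-norm term by Corollary \ref{lem:regularity_increments} and the H\"older seminorm crudely as in \eqref{eq:similar_estimates}, exactly as the paper does.
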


\begin{proof}
We have 
 \begin{equs}
 \|\widehat{\mathbb{K}}^{\eps_k,5}_{s,t}\|_{L_2} =\|K^{\eps_k}_{s,t}\big[ \phi  g'(P_{\cdot-s}u^{\eps_k}_s)J^{\eps_k}_{s,\cdot}[g(u^{\eps_k})-g(P_{\diamond-s}u^{\eps_k}_s)] \|_{L_2}.
\end{equs}
Just as in \eqref{eq:K4-bigbound}-\eqref{eq:bound_g'_14}, we have for any $\eps>0$
\begin{equs}
        \|K^\eps_{s,t}&\big[ \phi  g'(P_{\cdot-s}u^\eps_s)J^\eps_{s,\cdot}[g(u^\eps)-g(P_{\diamond-s}u^\eps_s)] \|_{L_2} 
     \\ \lesssim &    \| J^\eps_{s,\cdot}[g(u^\eps)-g(P_{\diamond-s}u^\eps_s)]\|_{\mathcal{C}^{0,1/4}_4([s,t])}|t-s|^{1/2}  .\label{eq:whatever}
    \end{equs}
Moreover, by \eqref{eq:J-C1/4}, we have for any $r \in [s,t]$
\begin{equs}
    \| J^\eps_{s,r}[g(u^\eps)-g(P_{\diamond-s}u^\eps_s)]\|_{\mathcal{C}^{1/4}_4}^ 2 & \lesssim \|g(u^\eps)-g(P_{\diamond-s}u^\eps_s)\|^2_{\mathcal{C}^{0,0}_4([s,r])}
    \\
    & \qquad \qquad +\int_s^r (r-\tau)^{-3/4} [g(u^\eps)-g(P_{\diamond-s}u^\eps_s)]^2_{\mathcal{C}^{0,1/4}_4([s,\tau])}\, d \tau .
\end{equs}
Since $g$ is Lipschitz continuous, by Lemma \ref{lem:regularity_increments} we get 
\begin{equs}
    \|g(u^\eps)-g(P_{\diamond-s}u^\eps_s)\|_{\mathcal{C}^{0,0}_4([s,r])} \leq  \|u^\eps-P_{\diamond-s}u^\eps_s\|_{\mathcal{C}^{0,0}_4([s,r])}\lesssim |r-s|^{1/8}.
\end{equs}
Similarly to \eqref{eq:similar_estimates}, we have 
\begin{equs}
   \,  [g(u^\eps)-g(P_{\diamond-s}u^\eps_s)]_{\mathcal{C}^{0,1/4}_4([s,r])}\lesssim 1. 
\end{equs}
Consequently,
\begin{equs}
      \| J^\eps_{s,r}[g(u^\eps)-g(P_{\diamond-s}u^\eps_s)]\|_{\mathcal{C}^{1/4}_4}\lesssim |r-s|^{1/8},
\end{equs}
which in turn gives 
\begin{equs}
     \| J^\eps_{s,\cdot}[g(u^\eps)-g(P_{\diamond-s}u^\eps_s)]\|_{\mathcal{C}^{0,1/4}_4([s,t])}\lesssim |t-s|^{1/8}.
\end{equs}
This combined with \eqref{eq:whatever} finishes the proof. 
\end{proof}

 \begin{lemma}               \label{lem:K6}
   Let $\phi \in C^{1/4}(\T)$.  There exists a constant $N=N( \|\phi\|_{C^{1/4}},C_g, \|\psi\|_{C^{1/4}})$ such that for all $k \in \mathbb{N}_1$ and $(s,t) \in [0,1]^2_{\leq}$ we have 
  \begin{equs}
 \|\widehat{\mathbb{K}}^{\eps_k,6}_{s,t}\|_{L_2}   \leq N|t-s|^{5/8}.
\end{equs}
\end{lemma}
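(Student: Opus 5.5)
We reduce to the original probability space via $(\widehat{u}^{\eps_k},\widehat{w}^{\eps_k})\stackrel{\mathrm{law}}{=}(u^{\eps_k},w)$, exactly as in Lemmata \ref{lem:K4} and \ref{lem:K5}. It then suffices to bound, for every $\eps>0$,
\begin{equ}
\big\|K^\eps_{s,t}\big[\phi\big(G_{s,\cdot}[u^\eps]-g'(P_{\cdot-s}u^\eps_s)\big)J^\eps_{s,\cdot}[g(u^\eps)]\big]\big\|_{L_2}.
\end{equ}
First we apply Lemma \ref{lem:basic_estimate_K} with $\alpha=1/4$, which turns this into $|t-s|^{1/2}$ times the $\mathcal{C}^{0,1/4}_2([s,t])$ norm of the integrand. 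By H\"older's inequality in $\omega$, and since $\phi\in C^{1/4}$, that norm is controlled by
\begin{equ}
\big\|G_{s,\cdot}[u^\eps]-g'(P_{\cdot-s}u^\eps_s)\big\|_{\mathcal{C}^{0,1/4}_4([s,t])}\,\big\|J^\eps_{s,\cdot}[g(u^\eps)]\big\|_{\mathcal{C}^{0,1/4}_4([s,t])}.
\end{equ}
For the second factor, $J^\eps_{s,r}[g(u^\eps)]=u^\eps_r-P_{r-s}u^\eps_s$. Corollary \ref{lem:regularity_increments} with $\alpha=1/4$ only gives an $O(1)$ bound, while with $\alpha=0$ it gives $|r-s|^{1/8}$.

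The power count therefore needs care. The total target is $|t-s|^{1/8}$ beyond the factor $|t-s|^{1/2}$, so we put the smallness into the $\mathcal{C}^0$ parts and allow the $1/4$-seminorms to be merely bounded. The product rule for seminorms gives
\begin{equ}
[ab]_{\mathcal{C}^{1/4}_2}\le \|a\|_{\mathcal{C}^0_4}[b]_{\mathcal{C}^{1/4}_4}+[a]_{\mathcal{C}^{1/4}_4}\|b\|_{\mathcal{C}^0_4}.
\end{equ}
Set $a=G_{s,r}[u^\eps]-g'(P_{r-s}u^\eps_s)$ and $b=J^\eps_{s,r}[g(u^\eps)]$. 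Since $g''$ is bounded, $|a|\lesssim|u^\eps_r-P_{r-s}u^\eps_s|$ pointwise, so $\|a\|_{\mathcal{C}^0_4}\lesssim|r-s|^{1/8}$ by Corollary \ref{lem:regularity_increments}. We also have $[b]_{\mathcal{C}^{1/4}_4}\lesssim1$ and $\|b\|_{\mathcal{C}^0_4}\lesssim|r-s|^{1/8}$. For $[a]_{\mathcal{C}^{1/4}_4}$ we use only boundedness: $a$ is built from $g'$ of convex combinations of $u^\eps_r$ and $P_{r-s}u^\eps_s$, so bounded $g''$ together with Theorem \ref{lem:niform_bound_1/4} and \eqref{eq:HK_omega_holder} gives $[a]_{\mathcal{C}^{1/4}_8}\lesssim1$. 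This requires differentiating under the $\theta$-integral in the definition \eqref{eq:definition_of_G} of $G$ and using Minkowski in $\theta$.

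Combining these, the seminorm part of the integrand is $\lesssim|r-s|^{1/8}\le|t-s|^{1/8}$. The sup part is smaller still: $\|ab\|_{\mathcal{C}^0_2}\lesssim|r-s|^{1/4}$. Hence the integrand has $\mathcal{C}^{0,1/4}_2([s,t])$ norm $\lesssim|t-s|^{1/8}$, and multiplying by $|t-s|^{1/2}$ gives the claimed $|t-s|^{5/8}$.

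The main point to check is that no factor actually requires $\mathcal{C}^{1/4}$-smallness. The product rule above shows this: each term pairs a bounded $1/4$-seminorm with an $O(|r-s|^{1/8})$ sup norm, and all moments come from Theorem \ref{lem:niform_bound_1/4} at a sufficiently large $p$.
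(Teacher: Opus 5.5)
Your proof is correct and is essentially the paper's argument. The paper applies Lemma~\ref{lem:basic_estimate_K} with $\alpha=1/4$ and splits the $\mathcal{C}^{0,1/4}_2([s,t])$ norm of $(G_{s,\cdot}[u^\eps]-g'(P_{\cdot-s}u^\eps_s))J^\eps_{s,\cdot}[g(u^\eps)]$ into a sup part of order $|t-s|^{1/4}$ and a seminorm part handled by the product rule, pairing each $O(1)$ H\"older seminorm with an $O(|t-s|^{1/8})$ sup norm from Corollary~\ref{lem:regularity_increments}, exactly as you do. One wording remark: bounding $[G_{s,r}[u^\eps]]_{\mathcal{C}^{1/4}_p}$ needs no differentiation under the $\theta$-integral (that would bring in $\nabla u^\eps$, which is not bounded uniformly in $\eps$); Lipschitz continuity of $g'$ inside the integral suffices, which is what the paper uses and what your estimate actually relies on.
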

\begin{proof}
We have 
  \begin{equs}
 \|\widehat{\mathbb{K}}^{\eps_k,6}_{s,t}\|_{L_2}=\|K^{\eps_k}_{s,t}\big[ \phi \big(G_{s,\cdot}[u^{\eps_k}]-g'(P_{\cdot-s}u^{\eps_k}_s)\big)\,  J^{\eps_k}_{s,\cdot}[g(u^{\eps_k})]  \big]\|_{L_2} .
\end{equs}
    By Lemma \ref{lem:basic_estimate_K} applied with $\alpha=1/4$, for any $\eps>0$, we have 
 \begin{equs}
\,  \|K^\eps_{s,t}\big[ \phi \big(G_{s,\cdot}[u^\eps]-&g'(P_{\cdot-s}u^\eps_s)\big)\,  J^\eps_{s,\cdot}[g(u^\eps)]  \big]\|_{L_2}   
\\
& \lesssim    \| \phi \big(G_{s,\cdot}[u^\eps]-g'(P_{\cdot-s}u^\eps_s)\big)\,  J^\eps_{s,\cdot}[g(u^\eps)]\|_{\mathcal{C}^{0,1/4}_2([s,t])} |t-s|^{1/2}
\\
& \lesssim  \| \phi \|_{C^{1/4}}\|\big( G_{s,\cdot}[u^\eps]-g'(P_{\cdot-s}u^\eps_s)\big)\,  J^\eps_{s,\cdot}[g(u^\eps)]\|_{\mathcal{C}^{0,1/4}_2([s,t])}|t-s|^{1/2}.
\end{equs}
Hence, it suffices to show that
\begin{equs}              \label{eq:suff_D1_D2}
 \|\big( G_{s,\cdot}[u^\eps]-& g'(P_{\cdot-s}u^\eps_s)\big)\,   J^\eps_{s,\cdot}[g(u^\eps)]\|_{\mathcal{C}^{0,1/4}_2([s,t])} \lesssim |t-s|^{1/8}.
 \end{equs}
 We have 
 \begin{equs}
\|\big( G_{s,\cdot}[u^\eps]- g'(P_{\cdot-s}u^\eps_s)\big)\, &  J^\eps_{s,\cdot}[g(u^\eps)]\|_{\mathcal{C}^{0,1/4}_2([s,t])}
\\
& \leq   \|\big( G_{s,\cdot}[u^\eps]-g'(P_{\cdot-s}u^\eps_s)\big)\, J^\eps_{s,\cdot}[g(u^\eps)]\|_{\mathcal{C}^{0,0}_2([s,t])}
\\
 & \qquad + [\big( G_{s,\cdot}[u^\eps]-g'(P_{\cdot-s}u^\eps_s)\big)\,  J^\eps_{s,\cdot}[g(u^\eps)]]_{\mathcal{C}^{0,1/4}_2([s,t])}. 
 \\
 &:= D_1+D_2,
\end{equs}
Further, 
\begin{equs}
    D_1 \leq  \| G_{s,\cdot}[u^\eps]-g'(P_{\cdot-s}u^\eps_s)\|_{\mathcal{C}^{0,0}_4([s,t])}\| J^\eps_{s,\cdot}[g(u^\eps)]\|_{\mathcal{C}^{0,0}_4([s,t])}
\end{equs}
Recalling the definition of $G_{s,\cdot}[u^\eps]$  (see \eqref{eq:definition_of_G}) and using the fact that $g'$ is Lipschitz continuous, it is easy to see that  
\begin{equs}
    \| G_{s,\cdot}[u^\eps]-g'(P_{\cdot-s}u^\eps_s)\|_{\mathcal{C}^{0,0}_4([s,t])}\lesssim \|u^\eps-P_{\cdot-s}u^\eps_s\|_{\mathcal{C}^{0,0}_4([s,t])} \lesssim |t-s|^{1/8}, \label{eq:G_C_0}
\end{equs}
where we have used Lemma \ref{lem:regularity_increments} for the last inequality. Moreover, we have 
\begin{equs}
    \| J^\eps_{s,\cdot}[g(u^\eps)]\|_{\mathcal{C}^{0,0}_4([s,t])}=  \|u^\eps-P_{\cdot-s}u^\eps_s\|_{\mathcal{C}^{0,0}_4([s,t])} \lesssim |t-s|^{1/8} \label{eq:J_C_0}
\end{equs}
Consequently, 
\begin{equs}
    D_1 \lesssim |t-s|^{1/4}.
\end{equs}
For $D_2$, we have 
\begin{equs}
    D_2 & \leq  \|\big( G_{s,\cdot}[u^\eps]-g'(P_{\cdot-s}u^\eps_s)\|_{\mathcal{C}^{0,0}_4([s,t])}[J^\eps_{s,\cdot}[g(u^\eps)]]_{\mathcal{C}^{0,1/4}_4([s,t])}
    \\
    & \qquad + [ G_{s,\cdot}[u^\eps]-g'(P_{\cdot-s}u^\eps_s)]_{\mathcal{C}^{0,1/4}_4([s,t])}\|J^\eps_{s,\cdot}[g(u^\eps)]\|_{\mathcal{C}^{0,0}_4([s,t])}
    \\
    & \lesssim |t-s|^{1/8}\Big(  [ G_{s,\cdot}[u^\eps]-g'(P_{\cdot-s}u^\eps_s)]_{\mathcal{C}^{0,1/4}_4([s,t])}+[J^\eps_{s,\cdot}[g(u^\eps)]]_{\mathcal{C}^{0,1/4}_4([s,t])}\Big),
\end{equs}
where we used \eqref{eq:G_C_0} and \eqref{eq:J_C_0}. Moreover, since $g'$ is Lipchitz continuous, upon using \eqref{eq:HK_omega_holder}, it is easy to see that 
\begin{equs}
    \, [ G_{s,\cdot}[u^\eps]-g'(P_{\cdot-s}u^\eps_s)]_{\mathcal{C}^{0,1/4}_4([s,t])} & \leq \|G_{s,\cdot}[u^\eps]\|_{\mathcal{C}^{0,1/4}_4([s,t])}+\|g'(P_{\cdot-s}u^\eps_s)\|_{\mathcal{C}^{0,1/4}_4([s,t])}
    \\
    & \lesssim 1+ \|u^\eps\|_{\mathcal{C}^{0,1/4}_4([s,t])} \lesssim 1
\end{equs}
where we have used Theorem \ref{lem:niform_bound_1/4} for the last inequality. 
Moreover, by Lemma \ref{lem:regularity_increments}, we have 
\begin{equs}
 \,    [J^\eps_{s,\cdot}[g(u^\eps)]]_{\mathcal{C}^{0,1/4}_4([s,t])}\leq   \|J^\eps_{s,\cdot}[g(u^\eps)]\|_{\mathcal{C}^{0,1/4}_4([s,t])}=\|u^\eps-P_{\cdot-s}u^\eps_s\|_{\mathcal{C}^{0,1/4}_4([s,t])} \lesssim 1
\end{equs}
Consequently, we have 
\begin{equs}
    D_2 \lesssim |t-s|^{1/8}
\end{equs}
Combining the estimates for $D_1$ and $D_2$ shows \eqref{eq:suff_D1_D2}, and this in turn proves the claim. 
\end{proof}

\begin{lemma}   \label{lem:limit_equation}
Consider the filtered probability space $(\widehat{\Omega}, \widehat{\mathcal{F}}, \widehat{\mathbb{G}}, \widehat{\mathbb{P}})$ and the space-time white noise $\widehat{\zeta}$ given by 
$\widehat{\zeta}= \sum_{n=1}^\infty  e_n \D_t(8\sqrt[\leftroot{2}\uproot{2}4]{\pi} \widehat{\beta}^n)$.
Then, the random field $\widehat{u}$ is the unique solution of 
\begin{equs}     
    \D_t \widehat{u}= \Delta \widehat{u} +  \frac{1}{8\sqrt[\leftroot{2}\uproot{2}4]{\pi}
}g'g (\widehat{u})\widehat{\zeta}, \qquad u|_{t=0}= \psi.
\end{equs}
\end{lemma}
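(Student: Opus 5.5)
We will show that $\widehat M(\phi)$ equals the stochastic integral in \eqref{eq:martingale-goal}. This identifies $\widehat u$ as a weak solution driven by $\widehat\zeta$, and a standard weak-to-mild argument together with uniqueness then finishes the proof. By the lemma on the limiting noise, $\widehat\zeta$ is a $\widehat{\mathbb G}$-space-time white noise, since $8\sqrt[4]{\pi}\widehat\beta^n$ are independent $\widehat{\mathbb G}$-Brownian motions. Moreover, $\widehat u$ is $\widehat{\mathbb G}$-adapted and continuous, hence predictable. By Lemma \ref{lem:skorohod-corrolary}(i) it also has the moment and time regularity bounds \eqref{eq:limit_C_1/4}--\eqref{eq:limit_time_reg}. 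The integral $\mathcal M_t:=\sum_n\int_0^t(g'g(\widehat u_r)e_n,\phi)\,d\widehat\beta^n_r$ is therefore a well-defined square integrable $\widehat{\mathbb G}$-martingale.

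\textbf{Step 1: the germ estimate for $\widehat M(\phi)$.} Fix $\phi\in C^\infty(\T)$ and $(s,t)\in[0,1]^2_\leq$. Combine the identity \eqref{eq:identity_before_limit} with the decomposition \eqref{eq:K_decomp_bold_Ks}. Lemma \ref{lem:germ_delta_estimate} gives convergence of $\widehat{\mathbb K}^{\eps_k,1}_{s,t}$, Lemma \ref{lem:K_2K_3} shows that the terms $m=2,3$ are $O(\eps_k^{1/4})$, and Lemmata \ref{lem:K4}, \ref{lem:K5}, \ref{lem:K6} bound the terms $m=4,5,6$ by $|t-s|^{5/8}$ uniformly in $k$. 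By Lemma \ref{lem:skorohod-corrolary}(ii), $\widehat M^{\eps_k}_t(\phi)-\widehat M^{\eps_k}_s(\phi)\to\widehat M_t(\phi)-\widehat M_s(\phi)$ in $L_2$. Letting $k\to\infty$ yields \eqref{eq:est1/2} for $\widehat M(\phi)$ with exponent $5/8$.

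\textbf{Step 2: the same germ estimate for $\mathcal M$, and identification.} By It\^o's isometry and Parseval,
$$\Big\|\mathcal M_t-\mathcal M_s-\sum_n\int_s^t(g'g(P_{r-s}\widehat u_s)e_n,\phi)\,d\widehat\beta^n_r\Big\|_{L_2}^2\lesssim\int_s^t\|g'g(\widehat u_r)-g'g(P_{r-s}\widehat u_s)\|_{\cC^0_4}^2\,dr .$$
We use that $g'g$ is locally Lipschitz with linear growth, together with H\"older's inequality, the moment bounds and \eqref{eq:limit_time_reg}. These give the bound $\int_s^t (r-s)^{1/4}\,dr\lesssim |t-s|^{5/4}$, so $\mathcal M$ also satisfies \eqref{eq:est1/2} with exponent $5/8$.

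Set $D:=\widehat M(\phi)-\mathcal M$. It is a martingale with $\|D_t-D_s\|_{L_2}\lesssim|t-s|^{5/8}$. For a partition of $[0,t]$ with mesh $h$, orthogonality of martingale increments gives
$$\E D_t^2=\sum_i\E(D_{t_{i+1}}-D_{t_i})^2\lesssim\sum_i h^{5/4}\to0 .$$
Hence $D\equiv0$. This is the rigorous form of the stochastic sewing characterisation invoked in the sketch, and avoids citing \cite{SSL}.

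\textbf{Step 3: from weak to mild, and uniqueness.} By Step 2, for every smooth $\phi$,
$$(\widehat u_t,\phi)=(\psi,\phi)+\int_0^t(\widehat u_r,\Delta\phi)\,dr+\int_0^t\int_\T\phi\,\frac{1}{8\sqrt[4]{\pi}}g'g(\widehat u_r)\,\widehat\zeta(dy,dr).$$
This uses $d\widehat\beta^n=(8\sqrt[4]{\pi})^{-1}d(8\sqrt[4]{\pi}\widehat\beta^n)$ and \eqref{eq:Walsh_vs_Krylov}. The standard equivalence between weak and mild formulations (test with $\phi=P_{t-\cdot}\varphi$ via It\^o's formula, or the classical Walsh argument) yields the mild form of Definition \ref{def:soln}. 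Uniqueness holds by the remark following that definition, using localisation since $g'g$ is only locally Lipschitz.

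The main obstacle is Step 1, and specifically making sure the $L_2$ limits are legitimate. Lemma \ref{lem:germ_delta_estimate} is stated exactly in the needed form, and the remainder bounds are uniform in $k$. The only subtlety is the uniform integrability behind Lemma \ref{lem:skorohod-corrolary}(ii), which is already provided.
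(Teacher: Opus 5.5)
Your proof is correct and follows the paper's argument: the same frozen-integrand germ $A_{s,t}$, the same $|t-s|^{5/8}$ bound for $\widehat M(\phi)$ obtained by passing to the limit in the decomposition $\widehat{\mathbb K}^{\eps_k,m}_{s,t}$ with Lemma \ref{lem:germ_delta_estimate}, the same It\^o-isometry estimate for the candidate stochastic integral, and the same appeal to the standard weak-to-mild equivalence. The only difference is in the identification step: the paper invokes the uniqueness part of the stochastic sewing lemma (Lemma \ref{lem:SSL-vanila}), whereas you use orthogonality of the increments of the martingale $D=\widehat M(\phi)-\mathcal M$ (with $D_0=0$ since $\widehat u_0=\psi$), which is exactly what sewing uniqueness reduces to when both processes are martingales and is a valid, more elementary replacement.
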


\begin{proof}
Fist note that $\widehat{u}$ is continuous in $(t,x)$ and $\widehat{\mathcal{P}}\otimes \mathcal{B}(\mathbb{T})$-measurable by construction, where $\widehat{\mathcal{P}}$ denotes the predictable $\sigma$-algebra associated to $\widehat{\mathbb{G}}$. Moreover, $\|\widehat{u}\|_{\mathcal{C}^{0,0}_2([0,1])}< \infty$ by \eqref{eq:limit_C_1/4}. Hence, it suffices to show  that for all $(t,x) \in [0,1] \times \mathbb{T}$, with probability one we have
  \begin{equs}
      \widehat{u}_t(x) = P_t\psi(x)+ \frac{1}{8\sqrt[\leftroot{2}\uproot{2}4]{\pi}}\int_0^t \int_{\T}p_{t-r}(x-y) g'g(\widehat{u}_r(y))  \widehat{\zeta} (dy, dr).
  \end{equs}
It is well-known that mild and (analytically) weak solutions in this setting are equivalent, hence, it suffices in fact to show that for every  $\phi \in C^\infty$, we have with probability one 
\begin{equs}
    (\widehat{u}_t, \phi)-(\psi, \phi)-\int_0^t (\widehat{u}_r, \Delta \phi) \, dr = \frac{1}{8\sqrt[\leftroot{2}\uproot{2}4]{\pi}}\int_0^t \int_{\mathbb{T}} g'g(\widehat{u}_r(y))\phi(y) \widehat{\zeta} (dy, dr),
\end{equs}
for all $t \in [0,1]$. With the notation of \eqref{eq:def_hat_M},  equality \eqref{eq:Walsh_vs_Krylov}, and using the fact that $\widehat{u}_0= \psi$, it suffices to show that for all $\phi \in C^\infty$, with probability one, for all $(s,t) \in [0,1]^2_{\leq}$ we have 
   \begin{equs}          \label{eq:M=tilde_A}
        \widehat{M}_t(\phi)-  \widehat{M}_s(\phi)=\sum_{n=1}^\infty \int_s^t \big(g'g(\widehat{u}_r) e_n, \phi\big) \, d\widehat{\beta}^n_r.
\end{equs} 
To show the above, we are going to use stochastic sewing,  Lemma \ref{lem:SSL-vanila}. 
For $(s,t) \in[0,1]^2_{\leq}$, let us set 
\begin{equs}
   A_{s,t}& : = \sum_{n=1}^\infty \int_s^t \big(g'g(P_{r-s}\widehat{u}_s) e_n, \phi\big) \, d\widehat{\beta}^n_r, \qquad   
    \tilde{\mathcal{A}}_t &:= \sum_{n=1}^\infty \int_0^t \big(g'g(\widehat{u}_r) e_n, \phi\big) \, d\widehat{\beta}^n_r.        \label{eq:defA}
\end{equs}
From now on, $A\lesssim B$ means $A \leq C B$ for some constant which does not depend on $(s,t)$. 
By It\^o's isometry, Parseval's identity, and the boundedness of $\phi$, we have 
\begin{equs}
    \| \tilde{\mathcal{A}}_t- \tilde{\mathcal{A}}_s-A_{s,t}\|_{L_2}^2 \lesssim  \int_s^t \E \|g'g(\widehat{u}_r)-g'g(P_{r-s}\widehat{u}_s)\|^2_{L_2(\mathbb{T})} \, dr.
\end{equs}
By Assumption \ref{as:g},  we have that $g'g$ is locally Lipschitz with linearly growing Lipschitz constant. Consequently, 
\begin{equs}
    \| \tilde{\mathcal{A}}_t- \tilde{\mathcal{A}}_s-A_{s,t}\|_{L_2}^2 &\lesssim  \int_s^t \big\|\big(1+|\widehat{u}_r|+|P_{r-s}\widehat{u}_s|\big)|\widehat{u}_r-P_{r-s}\widehat{u}_s| \big\|^2_{\mathcal{C}^0_2} \, dr
    \\
    & \lesssim  \int_s^t \|\widehat{u}_r-P_{r-s}\widehat{u}_s \|^2_{\mathcal{C}^0_4} \, dr
     \lesssim |t-s|^{5/4},
\end{equs}
where for the second inequality we have used H\"older's ineqality and \eqref{eq:limit_C_1/4} and for the third one we have used \eqref{eq:limit_time_reg}.  
Consequently, we have 
\begin{equs}        \label{eq:delta}
  \| \tilde{\mathcal{A}}_t- \tilde{\mathcal{A}}_s-A_{s,t}\|_{L_2}\lesssim |t-s|^{5/8}. 
\end{equs}
Moreover, by \eqref{eq:defA},  it is clear  that 
\begin{equs}              \label{eq:delta_cond}
    \widehat{\E}(\tilde{\mathcal{A}}_t- \tilde{\mathcal{A}}_s-A_{s,t} | \mathcal{G}_s)=0. 
\end{equs}
From \eqref{eq:delta}-\eqref{eq:delta_cond} it is also immediate that 
$\delta A_{s,u,t}$ satisfies \eqref{eq:SSL_con1}-\eqref{eq:SSL_con2} for some $\Gamma_1 \geq 0, \Gamma_2=0$, and  $\beta_1=5/8$, since $\delta \tilde{\mathcal{A}}_{s,u,t}=0$. Consequently, the assumptions of Lemma \ref{lem:SSL-vanila} are satisfied by $(A_{s,t})_{ (s,t) \in [0,1]^2_{\leq}}$. This means that there exist a unique process $\mathcal{A}$ satisfying \eqref{item:SSL_1}-\eqref{item:SSL_2}. On the other hand, $\tilde{\mathcal{A}}$ clearly satisfies \eqref{item:SSL_1} and by \eqref{eq:delta}-\eqref{eq:delta_cond} it also follows that it satisfies  \eqref{item:SSL_2}. By uniqueness, it follows that $\mathcal{A}=\tilde{\mathcal{A}}$. Then, it suffices to show that the process $\widehat{M}(\phi)=(\widehat{M}_t(\phi))_{t \in [0,1]}$ satisfies \eqref{item:SSL_1}-\eqref{item:SSL_2} too. Indeed, if this is the case, then by the uniqueness of the process $\mathcal{A}$, we have $\widehat{M}(\phi)= \tilde{\mathcal{A}}$, which is exactly \eqref{eq:M=tilde_A}. 

The fact that  $\widehat{M}(\phi)$ satisfies \eqref{item:SSL_1} follows immediately by Lemma \ref{lem:M_in_Lp}. As for \eqref{item:SSL_2},
 recall that by \eqref{eq:identity_before_limit} and \eqref{eq:K_decomp_bold_Ks}, we have 
\begin{equs}
     \widehat{M}^{\eps_k}_t(\phi)-  \widehat{M}^{\eps_k}_s(\phi)- \widehat{\mathbb{K}}^{\eps_k,1}_{s,t} = \sum_{m=2}^6\widehat{\mathbb{K}}^{\eps_k,m}_{s,t}.
\end{equs}
The above, combined with Lemmata \ref{lem:K_2K_3}, \ref{lem:K4}, \ref{lem:K5}, and \ref{lem:K6}, implies that 
\begin{equs}
     \|\widehat{M}^{\eps_k}_t(\phi)-  \widehat{M}^{\eps_k}_s(\phi)- \mathbb{K}^{\eps_k,1}_{s,t}\|_{L_2} \lesssim  \eps_k^{1/4}+|t-s|^{5/8}.     
\end{equs} 
Upon letting $\mathbb{N}_1 \ni k \to \infty$ and using \eqref{eq:lim_Meps_M} and Lemma \ref{lem:germ_delta_estimate}, we obtain 
\begin{equs}
    \|\widehat{M}_t(\phi)-\widehat{M}_s(\phi)-A_{s,t} \|_{L_2} \lesssim |t-s|^{5/8}.
\end{equs}
In addition, by \eqref{eq:defA} and the fact that $\widehat{M}(\phi)$ is a $\widehat{\mathbb{G}}$-martingale, it is immediate that 
\begin{equs}
     \widehat{\E}(\widehat{M}_t(\phi)-\widehat{M}_s(\phi)-A_{s,t} | \mathcal{G}_s)=0.
\end{equs}
The last two relations show that $\widehat{M}(\phi)$ satisfies \eqref{item:SSL_2} as well.  This brings the proof to an end. 
\end{proof}

\subsection{Proof of the main theorem}

We now have all the ingredients to prove the main theorem. 

\begin{proof}[Proof of Theorem \ref{thm:main}]
    First, notice that since $C([0,1] \times \mathbb{T})$ equipped with the $\sup$-norm is separable, the weak convergence is metrizable by the  L\'evy-Prokhorov metric.  Consequently, it suffices to show that $u^{\eps_k} \to u$ weakly for any sequence $(\eps_k)_{k \in \mathbb{N}}$ with $\lim_{k\to \infty}\eps_k=0$. By Lemma \ref{lem:noise-conv2}, we have that for every such sequence, there exists a subsequence $(\eps_k)_{k \in \mathbb{N}_1}$ and $\widehat{u}$ such that $u^{\eps_k}\to \widehat{u}$ weakly as $\mathbb{N}_1 \ni k \to \infty$. Moreover, by Lemma \ref{lem:limit_equation} and the fact that weak uniqueness holds for \eqref{eq:main_equation_limit}, we have that $u\overset{\mathrm{law}}{=} \widehat{u}$. This implies  that for the original sequence $(u^{\eps_k})_{k \in \mathbb{N}}$ we have that  $u^{\eps_k} \to u$ weakly. This finishes the proof. 
\end{proof}

\begin{appendix}

\section{Auxiliary Lemmata}

The following is a consequence of \cite[Theorem 2.2]{KurtzProtter}.
\begin{lemma}                \label{lem:convergence_stochastic_integrals}
    For $n \in \mathbb{N}\cup\{0\}$, let  $B^n$ be continuous $\mathbb{F}^n$-martingales and let $G^n$ be continuous  $\mathbb{F}^n$-adapted processes. Assume that $G^n \to G^0$ and $B^n \to B^0$, uniformly on $[0, 1]$, in probability. Then,   in probability, we have
    \begin{equs}
    \sup_{t \in [0, 1]}\big|\int_0^t G^n_s \, dB^n_s - \int_0^t G^0_s \, dB^0_s\big | \to 0.    
    \end{equs}
\end{lemma}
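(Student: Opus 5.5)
We derive the statement from \cite[Theorem 2.2]{KurtzProtter}, which says: if $(H^n,X^n)\to(H^0,X^0)$ in probability in the Skorohod topology, with $X^n$ semimartingales satisfying a uniform tightness (UT) condition and $X^n$ adapted to filtrations for which $H^n$ is adapted, then $(H^n,X^n,\int H^n_{-}\,dX^n)\to(H^0,X^0,\int H^0_{-}\,dX^0)$ in probability. Since all processes here are continuous, the left limits $G^n_{s-}$ coincide with $G^n_s$. Convergence in the Skorohod topology to a continuous limit is the same as uniform convergence on $[0,1]$, so the conclusion is exactly the claimed uniform convergence in probability.

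The main work is verifying the UT condition for the continuous martingales $B^n$. The practical sufficient condition in Kurtz--Protter is that for each $\alpha>0$ there are stopping times $\tau_n^\alpha$ with $\P(\tau_n^\alpha\leq \alpha)\leq 1/\alpha$ and $\sup_n\E[\langle B^n\rangle_{1\wedge\tau_n^\alpha}]<\infty$. We would localise with
\[
\tau^c_n:=\inf\{t:|B^n_t|\geq c\}.
\]
Since $B^n\to B^0$ uniformly in probability, $\sup_t|B^n_t|$ is tight. Hence $\P(\tau^c_n<1)$ is small uniformly in $n$ for $c$ large. The stopped martingale is bounded by $c$ thanks to continuity, so $\E\langle B^n\rangle_{1\wedge\tau^c_n}=\E(B^n_{1\wedge\tau^c_n}-B^n_0)^2\leq 4c^2$, which gives UT. Here we implicitly use that $B^n$ is a local martingale; a martingale is one.

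Next we check adaptedness: $G^n$ is $\mathbb{F}^n$-adapted and continuous, hence predictable, and $B^n$ is an $\mathbb{F}^n$-semimartingale, matching the hypotheses of the cited theorem. Joint convergence of the pair $(G^n,B^n)$ in probability follows from the two separate uniform convergences in probability.

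If one prefers to avoid the UT formalism, an alternative direct route discretises in time. First, approximate $G^n$ by the piecewise-constant $G^n_{\lfloor s\rfloor_m}$ on a mesh $1/m$; then $\int_0^t G^n_{\lfloor s\rfloor_m}dB^n_s$ is a Riemann sum that converges as $n\to\infty$ for fixed $m$. Second, control the error $\int_0^t (G^n_s-G^n_{\lfloor s\rfloor_m})\,dB^n_s$ via the same localisation and BDG. This requires uniform-in-$n$ smallness of the modulus of continuity of $G^n$, which follows from tightness in $C[0,1]$, itself a consequence of uniform convergence to a continuous limit. Either way, the only delicate step is the uniform control of the quadratic variations, which the localisation handles.
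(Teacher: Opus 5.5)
Your proposal is correct and follows the paper, which simply records this lemma as a consequence of \cite[Theorem 2.2]{KurtzProtter} and gives no further argument. Your stopping times $\tau^c_n$ verify the uniform-tightness hypothesis, using tightness of $\sup_t|B^n_t|$ and $\E\langle B^n\rangle_{1\wedge\tau^c_n}\le 4c^2$, and so supply the detail the paper leaves implicit. To avoid convention issues such as $[M]_0=M_0^2$, apply the theorem to $B^n-B^n_0$ and extend all processes constantly beyond time $1$.
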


The following is a consequence of \cite[Lemma~7.1.1]{Henry}. For the convenience of the reader we provide a short proof.
\begin{lemma}   \label{lem:Gronwal}
    Let $\gamma >-1$. Let $f, g:[0,1]\to [0, \infty)$ such that $f$ is bounded and $g$ is non-deceasing.  Assume that for all $t \in [0,1]$ we have 
    \begin{equs}
       f(t) \leq \int_0^t (t-r)^{\gamma} \sup_{s \leq r } f(s) \, dr +g(t).
    \end{equs}
    There exists a constant $N=N(\gamma)$, depending only on $\gamma$,  such that for all $t \in [0,1]$ we have 
    \begin{equs}
        \sup_{s \in [0,t]}f(s) \leq N g(t). 
    \end{equs}
\end{lemma}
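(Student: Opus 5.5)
The plan is to prove the bound for the running supremum $F(t):=\sup_{s\le t}f(s)$ on a short interval first, and then to propagate it to all of $[0,1]$. We may assume $g(t)>0$ (otherwise replace $g$ by $g+\eta$ and let $\eta\downarrow 0$). Since $g$ is non-decreasing, the hypothesis gives, for every $s\le t$,
\begin{equs}
f(s)\le \int_0^s (s-r)^\gamma F(r)\,dr+g(t).
\end{equs}
Taking the supremum over $s\le t$ requires comparing $\int_0^s(s-r)^\gamma F(r)\,dr$ with an expression in $t$. For $\gamma\ge 0$ this is immediate by monotonicity of $F$. For $\gamma\in(-1,0)$, substituting $r=s-\theta$ gives $\int_0^s\theta^\gamma F(s-\theta)\,d\theta\le \int_0^t\theta^\gamma F(t-\theta)\,d\theta$, since $F$ is non-decreasing and the domain of integration grows. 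In both cases we obtain
\begin{equs}
F(t)\le \int_0^t (t-r)^\gamma F(r)\,dr+g(t),
\end{equs}
where $F$ is bounded and non-decreasing. This is the only step that needs some care.

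We then run a short-time absorption. Fix $h\in(0,1]$ with $\int_0^h\theta^\gamma\,d\theta=h^{1+\gamma}/(1+\gamma)\le 1/2$. On the window $[t-h,t]$ we bound $F\le F(t)$, and on $[0,t-h]$ the kernel is at most $h^\gamma$ if $\gamma<0$, or at most $1$ if $\gamma\ge0$; call this bound $c_h$. This yields
\begin{equs}
F(t)\le \tfrac12 F(t)+c_h\int_0^{(t-h)\vee0}F(r)\,dr+g(t).
\end{equs}
Since $F$ is bounded, $F(t)$ can be absorbed into the left-hand side, giving $F(t)\le 2g(t)+2c_h\int_0^{t}F(r)\,dr$.

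The classical Gronwall inequality, combined with the monotonicity of $g$, then gives $F(t)\le 2g(t)e^{2c_h t}\le N(\gamma)g(t)$. The constant $h$, and hence $c_h$ and $N$, depend only on $\gamma$. This completes the argument. The only subtle point is the monotone rearrangement for negative $\gamma$ in the first step; everything else is routine.
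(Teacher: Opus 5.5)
Your argument is correct, but it takes a different route from the paper.

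\textbf{The paper's route.} The paper never splits the kernel. It introduces the weighted quantity $m(t)=e^{-\lambda t}f(t)$ and notes that $\sup_{s\le r}f(s)\le e^{\lambda r}\sup_{s\le r}m(s)$. It then uses the bound $\int_0^t (t-r)^\gamma e^{-\lambda(t-r)}\,dr\le N(\gamma)\lambda^{-1-\gamma}$, which holds uniformly in $t$. Choosing $\lambda$ large makes this factor at most $1/2$. Taking the supremum over $t\le T$ and absorbing then gives $\sup_{s\le T}m(s)\le 2g(T)$, hence the claim with $N=2e^{\lambda}$ in a single step.

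\textbf{Your route.} You absorb only the singular part of the kernel on a window of length $h$, and handle the remaining bounded kernel with the classical Gronwall lemma.

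\textbf{Comparison.} The weighted (Bielecki-type) argument is shorter and treats all $\gamma>-1$ at once. It needs neither the passage to the running supremum nor a second Gronwall step. Your version is more hands-on and reduces everything to the textbook Gronwall inequality, at the cost of the rearrangement step.

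\textbf{Minor simplifications.} The rearrangement step is fine as written. The substitution $r=s-\theta$ in fact works for every $\gamma>-1$, so the case split is unnecessary. You can also avoid the step entirely by doing the window absorption at each $s\le t$ first, i.e. $f(s)\le \tfrac12 F(t)+c_h\int_0^t F(r)\,dr+g(t)$, and only then taking the supremum over $s\le t$. Finally, the reduction to $g>0$ is not needed.
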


\begin{proof}
    Let $\lambda>0$ to be determined later and consider $m(t)= e^{-\lambda t}f(t)$. Then we have 
    \begin{equs}
        f(t) \leq \int_0^t (t-r)^{\gamma} e^{\lambda r} \sup_{s \leq r } m(s) \, dr +g(t),
    \end{equs}
    which implies that for arbitrary $T \in [0,1]$  and for all $t \in[0,T]$ we have 
    \begin{equs}                \label{eq:to_take_sup}
        m(t) \leq \big( \sup_{s\leq T} m(s) \big) \int_0^t (t-r)^{\gamma} e^{-\lambda (t-r) }  \, dr  +g(T). 
    \end{equs}
    Further, by a change of variables we see that for any $t \in [0,1]$ we have 
    \begin{equs}
         \int_0^t (t-r)^{\gamma} e^{-\lambda (t-r) }  \, dr  \leq \lambda^{-1-\gamma} N(\gamma). 
    \end{equs}
    Consequently, the claim follows by choosing $\lambda$ sufficiently large, taking suprema over $t \in [0,T]$ in \eqref{eq:to_take_sup} and rearranging. 
  \end{proof}  

  The following is from \cite[Theorem 2.1]{SSL}. Recall the notation 
  \begin{equs}
      \delta A_{s,u,t}= A_{s,t}-A_{s,u}-A_{u,t}.
  \end{equs}

\begin{lemma}\label{lem:SSL-vanila}
Let $d \in \mathbb{N}$, $p\in[2,\infty)$.  Let
$(A_{s,t})_{(s,t)\in[S,T]_\leq^2}$ be a family of $\R^d$-valued random variables 
such that $A_{s,t}$ is $\mathcal{F}_t$-measurable for all $(s,t)\in[0,1]_{\leq}^2$.
Suppose that there exist constants $\Gamma_1,\Gamma_2 \geq 0$, $\beta_1>1/2$, and $\beta_2>1$ such that for all $(s,u,t)\in[0,1]_{\leq}^3$ the following holds:
\begin{equs}
\|\delta A_{s,t}\|_{L_p}&\leq \Gamma_1|t-s|^{\beta_1},      \label{eq:SSL_con1}
\\
\|\E(\delta A_{s,u,t}| \mathcal{F}_s)\|_{L_p}&\leq \Gamma_2|t-s|^{\beta_2}.\label{eq:SSL_con2}
\end{equs}
  Then there exists a unique (up to modifications) process $(\mathcal{A}_t)_{t\in [0,1]}$ with the following properties: 

\begin{enumerate}[(I)]
 \item \label{item:SSL_1} $\mathcal{A}_0=0$ a.s., and for each $t \in [0,1]$, $\mathcal{A}_t$ is $\mathcal{F}_t$-measurable and $\mathcal{A}_t \in L_p$.

 \item \label{item:SSL_2} There exist $K_1,K_2 \geq 0$ such that for all  $(s,t)\in[0,1]_{\leq}^2$ we have 
\begin{equs}                      
    \|\mathcal{A}_t-\mathcal{A}_s- A_{s,t}\|_{L_p} &\leq K_1|t-s|^{\beta_1} \label{eq:SSL_uni_1}
    \\
    \|\E( \mathcal{A}_t-\mathcal{A}_s- A_{s,t}| \mathcal{F}_s)\|_{L_p}& \leq K_2|t-s|^{\beta_2}               \label{eq:SSL_uni_2}
\end{equs}
\end{enumerate}
\end{lemma}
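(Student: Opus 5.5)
This is the stochastic sewing lemma of \cite{SSL}, and I would prove it by the standard dyadic-partition argument, splitting each increment into a conditionally centred martingale-type part and a predictable part. For a partition $\pi$ of $[s,t]$, set $A^\pi_{s,t}=\sum_{[u,v]\in\pi}A_{u,v}$. The plan is to show three things: $A^{\pi_n}_{s,t}$ along dyadic partitions $\pi_n$ of mesh $2^{-n}|t-s|$ is Cauchy in $L_p$; its limit $\mathcal{A}_{s,t}$ is additive; and the limit obeys \eqref{eq:SSL_uni_1}--\eqref{eq:SSL_uni_2}. Setting $\mathcal{A}_t:=\mathcal{A}_{0,t}$ then gives existence.

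\textbf{Cauchy property.} Write the difference of consecutive dyadic sums as
\[
A^{\pi_n}_{s,t}-A^{\pi_{n+1}}_{s,t}=\sum_{i}\delta A_{u_i,m_i,v_i},
\]
with $m_i$ the midpoints, and split each summand as
\[
\delta A_{u_i,m_i,v_i}=\E(\delta A_{u_i,m_i,v_i}\mid\mathcal{F}_{u_i})+\big(\delta A_{u_i,m_i,v_i}-\E(\cdot\mid\mathcal{F}_{u_i})\big).
\]
\begin{itemize}
\item The conditional expectations are bounded by the triangle inequality and \eqref{eq:SSL_con2}, giving $2^n\Gamma_2(2^{-n}|t-s|)^{\beta_2}$. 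This is summable in $n$ because $\beta_2>1$.
\item The centred remainders are handled by splitting into even and odd indices. Within each class they form martingale differences with respect to $(\mathcal{F}_{v_i})$, since each term is $\mathcal{F}_{v_i}$-measurable and centred given $\mathcal{F}_{u_i}\supseteq\mathcal{F}_{v_{i-2}}$.
\item BDG (or Burkholder's discrete inequality) plus Minkowski in $L_{p/2}$ then bounds their $L_p$ norm by $\big(\sum_i\|\delta A\|_{L_p}^2\big)^{1/2}\lesssim \Gamma_1 2^{n/2}(2^{-n}|t-s|)^{\beta_1}$. This is summable because $\beta_1>1/2$.
\end{itemize}
Here \eqref{eq:SSL_con1} is read as a bound on $\delta A_{s,u,t}$, which is the only sensible interpretation of the hypothesis.

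\textbf{Properties of the limit.} Summing the two geometric series gives
\[
\|\mathcal{A}_{s,t}-A_{s,t}\|_{L_p}\lesssim\Gamma_1|t-s|^{\beta_1}+\Gamma_2|t-s|^{\beta_2}.
\]
Conditioning on $\mathcal{F}_s$ before summing, the martingale parts vanish, which yields \eqref{eq:SSL_uni_2} with $K_2\lesssim\Gamma_2$. On $[0,1]$ the $\Gamma_2$ term is absorbed since $\beta_2>\beta_1$, giving \eqref{eq:SSL_uni_1}. For additivity, $\mathcal{A}_{s,u}+\mathcal{A}_{u,t}=\mathcal{A}_{s,t}$, I would show that the limit does not depend on the sequence of partitions. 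Comparing any partition with mesh going to zero to its refinements by the same splitting argument works because dyadic-type refinements can be reached by removing one point at a time. Adaptedness and $\mathcal{A}_t\in L_p$ are immediate from the construction.

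\textbf{Uniqueness and the main obstacle.} Suppose $\mathcal{A},\mathcal{A}'$ both satisfy (I)--(II), and let $D_t=\mathcal{A}_t-\mathcal{A}'_t$. Then
\[
\|D_t-D_s\|_{L_p}\lesssim|t-s|^{\beta_1},\qquad \|\E(D_t-D_s\mid\mathcal{F}_s)\|_{L_p}\lesssim|t-s|^{\beta_2}.
\]
Take a uniform partition of $[0,t]$ with $n$ pieces and write $D_t=\sum_i(D_{t_{i+1}}-D_{t_i})$. The conditional means sum to $O(n^{1-\beta_2})$. The centred parts are martingale differences, so BDG gives $O(n^{1/2-\beta_1})$. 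Letting $n\to\infty$ gives $D_t=0$, and $D_0=0$ by (I).

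The step I expect to be the genuine obstacle is the martingale bound in the Cauchy step, specifically getting the square-function estimate with only $\beta_1>1/2$. The even/odd parity trick, with BDG applied in $L_p$ for $p\ge2$ and Minkowski, is what I would try first. Everything else is bookkeeping.
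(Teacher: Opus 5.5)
The paper gives no proof of this lemma; it is quoted from \cite[Theorem~2.1]{SSL}. Your dyadic Cauchy estimate and your uniqueness argument are correct. The parity split is harmless but unnecessary: for consecutive intervals $\mathcal{F}_{u_i}=\mathcal{F}_{v_{i-1}}$, so the centred terms already form a single martingale difference sequence. That step is routine, not the main obstacle.

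\textbf{The gap is additivity.} You need it because (II) concerns $\mathcal{A}_t-\mathcal{A}_s=\mathcal{A}_{0,t}-\mathcal{A}_{0,s}$, while your estimates are for the dyadic limit $\mathcal{A}_{s,t}$ on $[s,t]$, and the dyadic grids of $[0,s]$, $[s,t]$, $[0,t]$ are not nested.

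As stated, ``removing one point at a time'' is the deterministic sewing mechanism, and it fails under \eqref{eq:SSL_con1}:
\begin{itemize}
\item A single removal is controlled only by $\|\delta A_{a,u,b}\|_{L_p}\le\Gamma_1|b-a|^{\beta_1}$.
\item The best choice when $k$ intervals remain gives $b-a\le 2|t-s|/(k-1)$, and $\sum_k k^{-\beta_1}=\infty$ whenever $\beta_1\le 1$.
\item BDG cannot rescue this across successive removals. They involve nested rather than disjoint intervals, so their centred parts are not martingale differences.
\end{itemize}

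What is missing is a partition-uniform bound, valid for every partition $\pi$ of $[u,v]$:
$\|A^\pi_{u,v}-A_{u,v}\|_{L_p}\lesssim\Gamma_1|v-u|^{\beta_1}+\Gamma_2|v-u|^{\beta_2}$ and $\|\E(A^\pi_{u,v}-A_{u,v}|\mathcal{F}_u)\|_{L_p}\lesssim\Gamma_2|v-u|^{\beta_2}$.
It follows by removing points in batches:
\begin{itemize}
\item Since $\sum_i(t_{i+1}-t_{i-1})\le 2|v-u|$, more than $N/2-1$ of the $N-1$ interior points satisfy $t_{i+1}-t_{i-1}\le 4|v-u|/N$.
\item Take every other such point. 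The removed $\delta A_{t_{i-1},t_i,t_{i+1}}$ then live on disjoint consecutive intervals.
\item Your splitting plus BDG bounds the batch by $\Gamma_1|v-u|^{\beta_1}N^{1/2-\beta_1}+\Gamma_2|v-u|^{\beta_2}N^{1-\beta_2}$.
\item $N$ shrinks geometrically, so the batch bounds sum.
\end{itemize}
With this bound, compare $A^\pi$ with $A^{\pi\cup\pi'}$ interval by interval of $\pi$ and apply the splitting once more. This shows the Riemann sums converge to a partition-independent limit.

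A cheaper fix avoids arbitrary partitions altogether. Define $\mathcal{A}_{s,t}$ as the limit of the Riemann sums $S_n(s,t)$ along $\{s,t\}\cup(2^{-n}\Z\cap(s,t))$.
\begin{itemize}
\item Your Cauchy estimate goes through unchanged. The intervals split at level $n$ are disjoint, consecutive, of length at most $2^{-n}$, and at most $2^n|t-s|+2$ in number.
\item Additivity is immediate. $S_n(s,t)-S_n(s,u)-S_n(u,t)$ is either $0$ or a single $\delta A_{a,u,b}$ with $b-a\le 2^{-n}$, which tends to $0$ in $L_p$.
\end{itemize}

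\textbf{Two smaller points.}
\begin{itemize}
\item $\beta_2>\beta_1$ is not among the hypotheses, so your absorption step needs a justification. If $\beta_1>\beta_2$, conditional Jensen shows \eqref{eq:SSL_con2} holds with exponent $\beta_1$, so you may assume $\beta_2\ge\beta_1$.
\item $\mathcal{A}_t\in L_p$ is not ``immediate from the construction''. It requires $A_{s,t}\in L_p$, which the statement omits but which is necessary: take $A_{s,t}=X_t-X_s$ with $X$ adapted and non-integrable, so $\delta A\equiv 0$ but no $\mathcal{A}$ with (I)--(II) exists. Flag this hypothesis as you flagged the typo in \eqref{eq:SSL_con1}.
\end{itemize}
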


\end{appendix}

 \section*{Acknowledgment}
KD has been supported by  the Engineering \& Physical Sciences Research Council (EPSRC) [grant number EP/Y016955/1].
MG was funded by the European Union (ERC, SPDE, 101117125). Views and opinions expressed
are however those of the author(s) only and do not necessarily reflect those of the European Union
or the European Research Council Executive Agency. Neither the European Union nor the granting
authority can be held responsible for them. 

\bibliographystyle{Martin}
\bibliography{references}

\end{document}